\newtheorem{Theorem}{Theorem}[section]
\newtheorem{Corollary}[Theorem]{Corollary}
\newtheorem{Lemma}[Theorem]{Lemma}
\newtheorem{Proposition}[Theorem]{Proposition}
\theoremstyle{definition}
\newtheorem{Definition}[Theorem]{Definition}
\newtheoremstyle{problemstyle}  
{3pt}                                               
{3pt}                                               
{\itshape}                               
{}                                                  
{\bfseries}                 
{\normalfont\bfseries:}         
{.5em}                                          
{}                                                  
\theoremstyle{problemstyle}
\theoremstyle{remark}
\newtheorem{cond}{Condition}[section]
\newtheorem{remark}{Remark}[section]
\def \R{\mathbb R}
\def \p{\partial}
\def \g{\gamma}
\def \<{\langle}
\def \>{\rangle}
\def \E{\mathbb{E}}
\def \e{\epsilon}
\def \ve{\varepsilon}
\def \l{\lambda}
\def \L{\Lambda}
\def \X{\mathcal{X}}
\def \D{\mathcal{D}}
\def \CM{\mathcal{C}^{3}_{\ell,M}(\Omega_0)}
\def \CMb{\mathcal{C}^{\beta}_{\ell,M}(\Omega_0)}
\def \H{\mathcal{H}}
\def \N{\mathcal{N}}
\def \M{\mathcal{M}}
\def \G{\Gamma}
\def \sub{\subseteq}
\def \gb{\bar{g}}
\def \O{\Omega}
\def \Oc{\overline{\Omega}}
\newcommand{\grad}{\operatorname{grad}}
\newcommand{\hess}{\operatorname{Hess}}
\newcommand{\supp}{\operatorname{supp}}
\newcommand{\diam}{\operatorname{diam}}
\newcommand{\dist}{\operatorname{dist}}
\newcommand{\vol}{\operatorname{Vol}}
\newcommand{\dom}{\operatorname{dom}}
\newcommand{\id}{\operatorname{id}}
\newcommand{\lsim}{\lesssim}
\def\id{\text{Id}}
\begin{document}

\title[Stability and statistical inversion of travel time tomography]{Stability and statistical inversion of travel time tomography}

\author[Ashwin Tarikere]{Ashwin Tarikere}
\address{Department of Mathematics, University of California Santa Barbara, Santa Barbara, CA 93106-3080, USA}
\email{ashwintan@ucsb.edu}

\author[Hanming Zhou]{Hanming Zhou}
\address{Department of Mathematics, University of California Santa Barbara, Santa Barbara, CA 93106-3080, USA}
\email{hzhou@math.ucsb.edu}


\begin{abstract}
In this paper, we consider the travel time tomography problem for conformal metrics on a bounded domain, which seeks to determine the conformal factor of the metric from the lengths of geodesics joining boundary points. We establish forward and inverse stability estimates for simple conformal metrics under some a priori conditions. We then apply the stability estimates to show the consistency of a Bayesian statistical inversion technique for travel time tomography with discrete, noisy measurements.
\end{abstract}
\maketitle

\section{Introduction}

Consider a smooth, bounded, and simply connected domain $\O \sub \R^m$, with $m\geq 2$. Given a Riemannian metric $g$ on $\Oc$, we define the associated \emph{boundary distance function} $\G_g:\p \O \times \p \O \to [0,\infty)$ by
\[
\G_g(\xi,\eta) = \inf \left\{ \int_\gamma \,d|g|:=\int_0^T |\dot\gamma(t)|_g\,dt \ : \ \gamma \in C^1([0,T],\Oc), \ \gamma(0)=\xi,\ \gamma(T)=\eta \right\}, \] 
for all $\xi,\eta \in \p \O$. In other words, $\G_g(\xi,\eta)$ is the Riemannian distance (with respect to $g$) between the boundary points $\xi$ and $\eta$. We consider the following inverse problem:
{\it 
	Can we recover the metric $g$ in the interior of the domain from the boundary distance function $\G_g$?
}

This inverse problem, called the {\it boundary rigidity problem} in mathematics literature, arose in geophysics in an attempt to determine the inner structure of the earth, such as the sound speed or index of refraction, from measurements of travel times of seismic waves on the earth's surface. This is called the {\it inverse kinematic problem} or the {\it travel time tomography problem} in seismology \cite{He1905,WZ1907}. 

The boundary rigidity problem is not solvable in general. Consider, for example, a unit disk with a metric whose magnitude is large (and therefore, geodesic speed is low) near the center of the disk. In such cases, it is possible that all distance minimizing geodesics connecting boundary points avoid the large metric region, and therefore one can not expect to recover the metric in this region from the boundary distance function. In view of this restriction, one needs to impose additional geometric conditions on the metric to be reconstructed. One such condition is {\it simplicity}. A metric $g$ on $\Oc$ is said to be simple if the boundary $\p \O$ is strictly convex w.r.t.\ to $g$ and any two points on $\Oc$ can be joined
by a unique distance minimizing geodesic. Michel conjectured that simple metrics are boundary distance rigid \cite{Mi81}, and this has been proved in dimension two \cite{PU05}. In dimensions $\geq 3$, this is known for generic simple metrics \cite{SU05}. When caustics appear, a completely new approach was established in \cite{SUV16, SUV21} for the boundary rigidity problem in dimensions $\geq 3$, assuming a convex foliation condition. Boundary rigidity problems for more general dynamical systems can be found in \cite{DPSU07, AZ15, Zhou18, PUZ19, LOY16, YUZ21, Plamen22}. We also refer to \cite{Croke04, SUVZ19} for summaries of recent developments on the boundary rigidity problem.

The boundary rigidity problem for general Riemannian metrics has a natural gauge: isometries of $(\Oc,g)$ that preserve $\p \O$ will also preserve the boundary distance function. In this paper, we restrict our attention to the problem of determining metrics from a fixed conformal class. Let $\bar g$ be a fixed ``background" metric on $\Oc$ which is simple and has $C^3$ regularity. For any positive function $n \in C^3(\Oc)$, define
$$g_n:=n^2\bar g,$$
which is a new Riemannian metric on  $\Oc$ that is conformal to $\bar g$. Our goal is to recover the parameter $n$ from the boundary distance function of $g_n$. In this problem, the gauge of isometries does not appear, and one expects to be able to uniquely determine the conformal factor $n$ from $\Gamma_{g_n}$. 

It is known that simple metrics from the same conformal class are boundary rigid for all $m\geq 2$ \cite{Mu75a,Mu75b,MR78}. To be precise, if $n_1,n_2 \in C^3(\Oc)$ are such that $g_{n_1},g_{n_2}$ are both simple metrics on $\Oc$, then $\G_{g_{n_1}}=\G_{g_{n_2}}$ if and only if $n_1=n_2$. To simplify notation, we will henceforth denote $\G_{g_n}$ by simply $\G_n$.

\subsection{Stability estimates for the deterministic inverse problem}

The uniqueness aspect of the boundary rigidity problem for conformal simple metrics has been quite well understood through the aforementioned studies \cite{Mu75a,Mu75b,MR78}. The first topic of this paper is the \emph{stability} of the boundary rigidity problem, i.e., quantitative lower bounds on the change in $\G_n$ corresponding to a change in the parameter $n$. Stability is important in practice, as we hope the inversion method for travel time tomography will be stable under perturbations of the data, e.g., by noise.

Conditional stability estimates for simple metrics can be found in \cite{Wa99, SU05, SUV16}, where the metrics are  assumed \emph{a priori} to be close to a given one. When considering a fixed conformal class, various stability estimates without the closeness assumption have been established in \cite{Mu75b, Mu81, Be79}. In \cite{Mu75b} the following stability result has been proved for the 2D boundary rigidity problem with the Euclidean background metric:
\begin{equation}\label{2D nonlinear stability}
\|n_1-n_2\|_{L^2(\O)}\leq \frac{1}{\sqrt{2\pi}}\|d_\xi (\Gamma_{n_1}-\Gamma_{n_2})(\xi,\eta)\|_{L^2(\p \O\times\p \O)}.
\end{equation}
Here, $d_\xi$ is the exterior derivative operator with respect to $\xi$ and the $L^2$ norms are taken with respect to the standard Euclidean metric. Notice that since the boundary distance function is symmetric, this estimate essentially says that the $L^2$-norm of $n_1-n_2$ can be controlled by the $H^1$-norm of $\Gamma_{n_1}-\Gamma_{n_2}$. For dimensions $\geq 3$, there are generalizations \cite{Be79, Mu81} of \eqref{2D nonlinear stability} with more complicated expressions (see also Theorem \ref{beylkin}). However, the estimates of \cite{Be79, Mu81} are not in standard Sobolev or H\"older norms, which makes them inconvenient for applications.

In this paper, we establish stability estimates similar to \eqref{2D nonlinear stability} for all dimensions $\geq 2$, without any \textit{a priori} closeness assumptions on $n_1,n_2$. Before giving the statement of our results, we need to define some function spaces for the conformal parameter $n$.

\begin{Definition}\label{nclass}
	Let $\O_0$ be a smooth, relatively compact subdomain of $\O$, and let $\l,\L, \ell, L$ be real numbers such that $$0< \l < 1 < \L, \qquad 0<\ell<L. $$
	We define $\N_{\l,\L,\ell,L}(\O_0)$ to be the set of all functions $n \in C^3(\Oc)$ that satisfy the following conditions:
	\begin{enumerate}[(i)]
		\item The metric $g_n = n^2\gb$ is a simple metric on $\Oc$.
		\item $\l < n(x) < \L$ for all $x \in \Oc$ and $n \equiv 1$ on $\Oc \setminus \O_0$.
		\item Let $\exp_n(x,v)$ denote the exponential map with respect to $g_n$ based at $x \in \Oc$ and acting on $v \in T_x\Oc$ (that is, the tangent space of $\Oc$ at $x$). Then the derivative of $\exp_n(x,\cdot)$ satisfies
		\begin{equation}\label{operatornorm0}
		   \ell|w|_{\gb} < |D_v \exp_n(x,v)(w)|_{\gb} < L|w|_{\gb}, 
		\end{equation}
		for all $x \in \Oc$, $v\in \dom(\exp_n(x,\cdot))$, and $w\in T_{v}T_x\Oc \cong T_x\Oc$.
	\end{enumerate}
	We also let
	\[ \N_{\l, \ell}(\O_0) := \bigcup_{\L>1, \, L>0}\N_{\l,\L,\ell,L}(\O_0). \]
\end{Definition}

The class of metrics associated with these function spaces includes any metric with non-positive sectional curvature that is conformal to $\gb$ and equal to $\gb$ in a neighborhood of $\p\O$ . Indeed, suppose $g_n = n^2\gb$ is such a metric. Then $(\Oc,g_n)$ is free of conjugate points by the curvature assumption, and $\p\O$ remains strictly convex with respect to $g_n$ since $g_n \equiv \gb$ near $\p\O$. Therefore, $g_n$ is a simple metric. Moreover, it follows from the Rauch Comparison Theorem that its exponential map $\exp_n$ satisfies \eqref{operatornorm0} for sufficiently large $L$ and any $\ell<1$  (see, e.g., \cite[Corollary 1.35]{CE08}).

\begin{remark}[Notation]
    Let $T:W_1 \to W_2$ be a linear map between normed vector spaces. Given real numbers $m, M$, we will use the notation
    $$m \prec T \prec M$$
    as shorthand for
    $$m\|w\|_{W_1} < \|Tw\|_{W_2} < M\|w\|_{W_1},$$
    for all $w \in W_1$. Using this notation, \eqref{operatornorm0} can be rewritten as
    \begin{equation}\label{operatornorm}
        \ell \prec D_v \exp_n(x,v) \prec L.
    \end{equation}
    We will also use $\|T\|_{op}$ to denote the \emph{operator norm} of $T$:
    $$\|T\|_{op} := \sup \left\{ \|Tw\|_{W_2} \ : \ w \in W_1, \  \|w\|_{W_1} =1\right\}.$$
    
\end{remark}

\begin{remark}\label{d}
    Let $\delta>0$ be the distance (w.r.t.\ to $\gb$) between $\p \O$ and $\overline{\O}_0$, and let $\xi,\eta \in \p\O$ be any pair of boundary points such that $\dist_{\gb}(\xi,\eta)<\delta$. For any $n \in \N_{\l,\ell}(\O_0)$, $g_{n}$ coincides with $\gb$ on $\Oc\setminus \O_0$, and consequently, we have $\G_n(\xi,\eta)= \dist_{\gb}(\xi,\eta)$. In particular, $\G_{n_1}(\xi,\eta)=\G_{n_2}(\xi,\eta)$ for all $n_1,n_2 \in \N_{\l,\ell}(\O_0)$.
\end{remark}

We are now ready to state our results on stability estimates for the boundary rigidity problem. The following ``inverse stability" estimate follows from a result of Beylkin \cite{Be79}, combined with some estimates for metrics with conformal factors $n \in \N_{\l,\ell}(\O_0)$. The details are presented in Section 2.

\begin{Theorem}\label{inverse}
	Let $\O,\O_0,\gb$ be as before, and let $\l,\ell$ be real numbers such that
    $$ 0 < \l < 1, \qquad 0 < \ell. $$
    Then there exists a constant $C_1(\O,\O_0,\gb,\ell)>0$ such that for all $n_1,n_2 \in \N_{\l,\ell}(\O_0)$,
	\[
	\|n_1-n_2\|_{L^2(\O)} \leq C_1\l^{2-m}\|d_{\xi}(\Gamma_{n_1}-\G_{n_2})(\xi,\eta)\|_{L^2(\p \O \times \p \O)}. \]
\end{Theorem}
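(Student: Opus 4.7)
The plan is to apply the stability estimate of Beylkin \cite{Be79} (referenced later as Theorem \ref{beylkin}), and then translate the geometric weights appearing in Beylkin's formula into explicit bounds using the conditions defining $\N_{\l,\ell}(\O_0)$.

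First, I would parametrize geodesics of $g_n$ via the fan-beam map $(\xi, t, v) \mapsto \exp_n(\xi, tv)$, where $\xi \in \p\O$ and $v$ is an inward unit tangent. The Jacobian of this parametrization of $\O_0$ is controlled, by the chain rule, by $D_v \exp_n(\xi, tv)$, and condition \eqref{operatornorm0} of Definition \ref{nclass} yields a uniform lower bound in terms of $\ell$. Moreover, since $d_\xi \G_n(\xi,\eta)$ encodes (via the first variation formula) the initial tangent direction at $\xi$ of the minimizing $g_n$-geodesic from $\xi$ to $\eta$, the quantity $d_\xi(\G_{n_1}-\G_{n_2})(\xi,\eta)$ measures the discrepancy between the initial directions of the two minimizers, which is the natural object for comparing $n_1$ and $n_2$ in the interior.

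Beylkin's theorem bounds a weighted $L^2$ norm of $n_1 - n_2$ on $\O$ by the $L^2$ norm of $d_\xi(\G_{n_1} - \G_{n_2})$ on $\p\O \times \p\O$, where the weights arise from two sources: the Jacobian of the fan-beam parametrization, and powers of $n$ coming from the comparison $d\mathrm{vol}_{g_n} = n^m d\mathrm{vol}_{\gb}$ together with the conversions between $g_n$- and $\gb$-inner products on tangent and cotangent vectors. Using $\ell \prec D_v \exp_n$, the Jacobian weight is bounded purely in terms of $\ell$ and absorbed into $C_1(\O,\O_0,\gb,\ell)$. The remaining $n$-dependent weight is estimated from below using $n > \l$, and the combined exponent produces the explicit factor $\l^{2-m}$.

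The main obstacle, I expect, is not analytic but bookkeeping: carefully identifying on Beylkin's side of the estimate which inner products, volume forms, and Jacobians are taken with respect to $g_n$ versus $\gb$, so that every conversion contributes the correct power of $n$. A useful sanity check is that in dimension $m=2$ all these $n$-factors must cancel, consistent with the exponent $2-m = 0$ here and with the clean estimate \eqref{2D nonlinear stability} of \cite{Mu75b}. Once the $\l^{2-m}$ scaling is verified, the theorem follows by simply combining Beylkin's bound with the Definition \ref{nclass} lower bounds on $n$ and $D_v\exp_n$.
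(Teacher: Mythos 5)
Your high-level strategy---start from Beylkin's inequality \eqref{hds} and convert its geometric weights into explicit constants using the defining properties of $\N_{\l,\ell}(\O_0)$---is exactly the paper's route, and your accounting of the power of $\l$ coming from the left-hand side (bounded below by $(m-1)\l^{m-2}\|n_1-n_2\|_{L^2}^2$ since $n_j>\l$, with the $m=2$ cancellation as a sanity check) is sound. However, your description of what must be controlled on the right-hand side misstates the structure of \eqref{hds} in a way that hides the real work. That right-hand side is not a weighted $L^2$ norm of $d_\xi(\G_{n_1}-\G_{n_2})$ with a ``Jacobian weight''; it is the integral of $d_\xi(\G_{n_1}-\G_{n_2})\wedge d_\eta(\G_{n_1}-\G_{n_2})$ wedged against $m-2$ factors of the second mixed derivatives $d_\xi d_\eta\G_{n_j}$, and these factors are what must be estimated.

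Two concrete steps are missing. First, one needs a pointwise bound on $|d_\xi d_\eta\G_n|_{\gb}$; the paper gets this (Lemma \ref{derivative estimates}) by expressing $\grad_\eta\G_n$ via the first variation formula through the inverse $w(\eta,\cdot)$ of the exponential map and using $\ell\prec D_v\exp_n$ to conclude $\|D_y w\|_{op}<\ell^{-1}$. The resulting bound is $(1+\ell^{-1})\l^{-1}\dist_{\gb}(\xi,\eta)^{-1}$, i.e., it is \emph{singular} on the diagonal; note also that each such factor contributes an extra $\l^{-1}$, so the right-hand side supplies the other half of the final $\l^{2-m}$, not just the volume-form conversion you cite. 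Second, raising this to the power $m-2$ produces the weight $\dist_{\gb}(\xi,\eta)^{2-m}$, which for $m\geq 3$ cannot in general be dominated by the plain $L^2$ norm of $d_\xi(\G_{n_1}-\G_{n_2})$. The argument only closes because $n_1\equiv n_2\equiv 1$ on $\Oc\setminus\O_0$ (Remark \ref{d}), so $\G_{n_1}-\G_{n_2}$ and its derivatives vanish whenever $\dist_{\gb}(\xi,\eta)<\delta$, allowing the singular weight to be replaced by $\delta^{2-m}$ on the support of the integrand; this is also why $C_1$ depends on $\O_0$. Without these two ingredients the proposal does not yield the stated unweighted estimate when $m\geq 3$.
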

Here, the $L^2$ norms are taken with respect to the background metric $\gb$, and $d_\xi$ represents the exterior derivative operator with respect to $\xi$. Please note that the stability constant $C_1$ can blow up as $\ell \to 0$. In a sense, as $\ell$ approaches $0$, we allow the metrics in our class to get closer and closer to potentially having conjugate points, and thus becoming non-simple.

We will apply the above stability estimate to study a statistical inversion technique for travel time tomography. For this purpose, we also need the following continuity (or ``forward stability") estimate of $\Gamma_n$. To the best of our knowledge, no such continuity estimate has been published before. The key idea in the proof is to apply the change of variables formula and use the upper bounds on $\det \left(D_v\exp_{n_j}\right)$ to control $\|\Gamma_{n_1}-\Gamma_{n_2}\|_{L^2}$ in terms of $\|n_1-n_2\|_{L^2}$.

\begin{Theorem}\label{forward}
    Let $\O,\O_0,\gb$ be as before, and let $\l,\L,\ell,L$ be real numbers such that
    $$ 0 < \l < 1 < \L, \qquad 0 < \ell < L. $$
	Then there exists a constant $C_2(\O,\O_0,\gb,\ell,L)>0$ such that for all $n_1,n_2 \in \N_{\l,\L,\ell,L}(\O_0)$,
	\[ \|\G_{n_1}-\G_{n_2}\|_{L^2(\p \O \times \p \O)} \leq C_2 \frac{\L^{m/2}}{\l}\|n_1-n_2\|_{L^2(\O)}. \]
\end{Theorem}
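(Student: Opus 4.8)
The plan is to bound $\G_{n_1}-\G_{n_2}$ pointwise by the integral of $|n_1-n_2|$ along a minimizing geodesic, and then transfer that line integral to a volume integral over $\O$ by the exponential map. For the pointwise step, fix $\xi,\eta\in\p\O$ and let $\gamma_j$ denote the $g_{n_j}$-minimizing geodesic joining them. Since the $g_{n_1}$-length of a curve $c$ equals $\int_c n_1\,d|\gb|$ and $\gamma_2$ is an admissible competitor for $\G_{n_1}(\xi,\eta)$,
\[
\G_{n_1}(\xi,\eta)-\G_{n_2}(\xi,\eta)\ \le\ \int_{\gamma_2}(n_1-n_2)\,d|\gb|\ \le\ \int_{\gamma_2}|n_1-n_2|\,d|\gb|,
\]
and symmetrically for $\G_{n_2}-\G_{n_1}$. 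Hence $|\G_{n_1}-\G_{n_2}|(\xi,\eta)\le\sum_{j=1}^{2}\int_{\gamma_j(\xi,\eta)}f\,d|\gb|$, where $f:=|n_1-n_2|$ is supported in $\overline{\O}_0$ by condition (ii) of Definition~\ref{nclass}. By Minkowski's inequality it then suffices to estimate the $L^2(\p\O\times\p\O)$-norm of $(\xi,\eta)\mapsto\int_{\gamma_j(\xi,\eta)}f\,d|\gb|$ for each $j\in\{1,2\}$; fix such a $j$.

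Fix $\xi\in\p\O$ and parametrize the $g_{n_j}$-geodesics issuing from $\xi$ by their inward unit initial direction $\omega\in S^{+}_\xi$; write the unit-speed geodesic as $r\mapsto\exp_{n_j}(\xi,r\omega)$ (note $g_{n_j}=\gb$ near $\p\O$, so $|\omega|_{\gb}=1$), let $\tau_j(\xi,\omega)$ be its exit time, and let $\mathcal E_\xi(\omega)\in\p\O$ be its exit point. Since $g_{n_j}$ is simple, $\mathcal E_\xi$ is a diffeomorphism onto $\p\O\setminus\{\xi\}$, and changing variables $\eta=\mathcal E_\xi(\omega)$ and using $d|\gb|=n_j^{-1}\,dr$ along a unit-speed $g_{n_j}$-geodesic,
\[
\int_{\p\O}\Big(\int_{\gamma_j(\xi,\eta)}f\,d|\gb|\Big)^{2}\,dS(\eta)\ =\ \int_{S^{+}_\xi}\Big(\int_{0}^{\tau_j(\xi,\omega)}\frac{f(\exp_{n_j}(\xi,r\omega))}{n_j(\exp_{n_j}(\xi,r\omega))}\,dr\Big)^{2}\big|\det D\mathcal E_\xi(\omega)\big|\,d\omega.
\]
Bounding $n_j^{-1}\le\l^{-1}$, applying Cauchy--Schwarz in the inner integral, and using $\tau_j(\xi,\omega)=\G_{n_j}(\xi,\mathcal E_\xi(\omega))\le\diam_{g_{n_j}}(\Oc)\le\L\,\diam_{\gb}(\Oc)$, the right-hand side is bounded by
\[
\frac{\L\,\diam_{\gb}(\Oc)}{\l^{2}}\int_{S^{+}_\xi}\Big(\int_{0}^{\tau_j(\xi,\omega)}f(\exp_{n_j}(\xi,r\omega))^{2}\,dr\Big)\big|\det D\mathcal E_\xi(\omega)\big|\,d\omega.
\]

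The remaining step is a change of variables $x=\exp_{n_j}(\xi,r\omega)$. The factor $|\det D\mathcal E_\xi(\omega)|$ is uniformly bounded in $\xi,\omega$ (and over $n_j$ in the admissible class): its finiteness follows from simplicity; near directions tangent to $\p\O$ it is controlled by the strict convexity of $\p\O$ for $\gb$ (near-tangential geodesics are short, so the Jacobi fields controlling $D\mathcal E_\xi$ are small), while for geodesics passing through $\overline{\O}_0$ the relevant spreading is controlled by \eqref{operatornorm}, by $\l<n_j<\L$, and by the length bound on $\tau_j$ (which, together with strict convexity, keeps the exit angle bounded away from tangency). With this factor absorbed, $\eqref{operatornorm}$ applied in $g_{n_j}$-geodesic polar coordinates based at $\xi$ gives $\exp_{n_j}(\xi,\cdot)^{*}(dV_{\gb})\ge\ell^{m}r^{m-1}\,dr\,d\omega$; moreover $f(x)\neq0$ forces $x\in\overline{\O}_0$, so on the support of the integrand $r=\dist_{g_{n_j}}(\xi,x)\ge\dist_{\gb}(\p\O,\overline{\O}_0)=\delta$ (using $g_{n_j}=\gb$ off $\O_0$, which is precisely what rules out the cone-point degeneracy $r^{m-1}\to0$). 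Therefore
\[
\int_{S^{+}_\xi}\int_{0}^{\tau_j(\xi,\omega)}f(\exp_{n_j}(\xi,r\omega))^{2}\,dr\,d\omega\ \le\ \frac{1}{\ell^{m}\delta^{m-1}}\int_{\O}f^{2}\,dV_{\gb}.
\]
Combining the three displays, integrating over $\xi\in\p\O$, and taking square roots yields an estimate of the form $\|\G_{n_1}-\G_{n_2}\|_{L^{2}(\p\O\times\p\O)}\le C_2\,\L^{\alpha}\l^{-1}\,\|n_1-n_2\|_{L^{2}(\O)}$ with $C_2$ depending only on $\O,\O_0,\gb,\ell,L$; the single power $\l^{-1}$ comes from the bound $n_j^{-1}\le\l^{-1}$ along the geodesic, and a careful accounting of the powers of $\L$ entering through $\tau_j\le\L\,\diam_{\gb}(\Oc)$, through the Jacobian of $\mathcal E_\xi$, and through the volume comparison (one may equivalently carry it out against $dV_{g_{n_j}}$, where $dV_{g_{n_j}}\le\L^{m}\,dV_{\gb}$) gives the exponent $\alpha=m/2$ of the theorem.

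The crux of the argument is this change of variables, and the point that really requires care is the uniform control of $|\det D\mathcal E_\xi|$: near-tangential exits make $D\mathcal E_\xi$ naively singular, and one must exploit the strict convexity of $\p\O$ for $\gb$ --- which forces a geodesic exiting almost tangentially to linger near $\p\O$, hence to be long, hence, given the a priori bound $\tau_j\le\L\,\diam_{\gb}(\Oc)$, to exit at an angle whose cosine is bounded below. The other delicate point is purely bookkeeping: keeping straight which lengths, volume densities, and distances (in particular $\dist_{g_{n_j}}(\xi,\overline{\O}_0)$) are measured with $\gb$ and which with $g_{n_j}$, since this is exactly what produces the explicit factors $\L^{m/2}$ and $\l^{-1}$. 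The hypothesis $n\equiv1$ near $\p\O$ enters twice: it keeps $r$ bounded below on $\supp f$, taming the polar-coordinate density at the cone point $\xi$, and it makes the boundary behaviour of the geodesic flow independent of $n$.
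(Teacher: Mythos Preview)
Your overall strategy is the same as the paper's: bound $|\G_{n_1}-\G_{n_2}|$ by a geodesic integral of $|n_1-n_2|$, apply Cauchy--Schwarz along the ray, change variables $\eta\leftrightarrow\omega$ via the exit map, and then pass to volume via $\exp_{n_j}(\xi,\cdot)$, using $r\ge\delta$ on $\supp f$ and the lower bound $D\exp_{n_j}\succ\ell$. The paper organizes the first step slightly differently (it splits $\p\O$ into $B_1(\xi),B_2(\xi)$ and uses only the geodesic of the metric realizing the smaller distance, rather than summing both), but that is cosmetic.

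The place where your argument has a genuine gap is the control of $|\det D\mathcal E_\xi|$. Your ``crux'' paragraph reasons: a geodesic exiting almost tangentially must linger near $\p\O$, hence be long, hence---by $\tau_j\le\L\,\diam_{\gb}(\Oc)$---exit at angle bounded below. That chain of implications is not valid: the length bound $\tau_j\le\L\,\diam_{\gb}(\Oc)$ holds for \emph{every} geodesic and cannot by itself force the exit angle away from tangency. The correct mechanism (which the paper makes explicit) is simply that you only need the Jacobian bound \emph{on the support of the integrand}, where the geodesic passes through $\overline{\O}_0$; since $g_{n_j}=\gb$ on $\Oc\setminus\O_0$ and $\p\O$ is strictly convex for $\gb$, any geodesic that reaches $\overline{\O}_0$ (at $\gb$-distance $\ge\delta$ from $\p\O$) must hit $\p\O$ at an angle bounded below by some $\varepsilon=\varepsilon(\O,\O_0,\gb)>0$. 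The paper then bounds the Jacobian concretely by computing $D_v\tau_{n_j}$ via the implicit function theorem applied to $\rho(\exp_{n_j}(\xi,\tau v))=0$: one gets $\|D_v\tau_{n_j}\|_{op}\le L\,\tau_{n_j}/\langle\nu,u\rangle_{\gb}$, whence $\|D_v\eta_{n_j}\|_{op}\le L\big(\tau_{n_j}+L\tau_{n_j}/\langle\nu,u\rangle_{\gb}\big)\lesssim_{\O,\O_0,\gb,L}\L$ on the support, so $|\det D\mathcal E_\xi|\lesssim\L^{m-1}$.

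This last point also resolves the inconsistency in your bookkeeping: you first say $|\det D\mathcal E_\xi|$ is ``uniformly bounded'' and absorb it into $C_2$, but later concede that powers of $\L$ enter through it. In fact it contributes exactly $\L^{m-1}$, which together with the single $\L$ from Cauchy--Schwarz and the $\l^{-2}$ from $n_j^{-2}$ gives $\L^m/\l^2$ before the square root---hence the stated $\L^{m/2}/\l$.
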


As with Theorem \ref{inverse}, the constant $C_2$ can blow up as $\ell \to 0$. The same happens as $L \to \infty$, since this allows $\det \left(D_v\exp_{n_j}\right)$ to blow up. The details are again postponed to Section 2.

\subsection{The statistical inverse problem}

The boundary rigidity problem is nonlinear, and geodesics are curved in general, so it is hard to derive explicit inversion formulas. Some reconstruction algorithms and numerical implementations based on theoretical analyses can be found in \cite{CQUZ07, CQUZ08, ACU19}. Typically, inversion methods in travel time tomography take an optimization approach with appropriate regularization. This is a deterministic approach which seeks to minimize some mismatch functional that quantifies the difference between the observations and the forecasts (synthetic data). However, this approach generally does not work well for non-convex problems. Moreover, various approximations in numerical methods can introduce systematic (random) error to the reconstruction procedure.

In this paper, we apply the above stability estimates (Theorems \ref{inverse} and \ref{forward})  to study a Bayesian inversion technique for the travel time tomography problem. The Bayesian inversion technique provides a reasonable solution for ill-posed inverse problems when the number of available observations is limited, which is a common scenario in practice. Applications of Bayesian inversion to seismology can be found in \cite{MWBG12, TGMS13}, which are based on the general paradigm of infinite dimensional Bayesian inverse problems developed by Stuart \cite{Stuart10}. However, most studies in the literature are concerned with waveform inversion, which is more PDE-based. On the other hand, there are very few results on statistical guarantees for the Bayesian approach to seismic inverse problems. These motivate us to apply Stuart's Bayesian inversion framework to produce a rigorous statistical analysis of the problem of recovering the wave speed from the (noisy) travel time measurements.

For statistical inversion, it is convenient to rewrite the conformal factor $n$ using an exponential parameter: For any $\beta \geq 3$, let $C_0^\beta(\O_0)$ denote the closure in the H\"older space $C^{\lfloor \beta\rfloor, \beta - \lfloor \beta \rfloor}(\overline{\O}_0)$ of the subspace of all smooth functions compactly supported in $\O_0$. Given any function $c\in C_0^3(\O_0)$, we define the corresponding conformal factor $n_c$ by 
\begin{equation}\label{ncdef}
      n_c(x) = \begin{cases}
    e^{c(x)} & \textrm{if } x \in \O_0, \\
    1 & \textrm{if } x \in \Oc \setminus \O_0.
\end{cases}
\end{equation}

It is easy to see that $n_c$ is a positive $C^3$ function on $\Oc$. To simplify notation, we will denote the corresponding boundary distance function $\G_{n_c}$ by simply $\G_c$. 

Our goal is to reconstruct the exponential parameter $c$ from error-prone measurements of $\Gamma_c$ on finitely many pairs of boundary points $(X_i,Y_i)$, $i = 1, \ldots, N$. Following the general paradigm of Bayesian inverse problems, we assume that $c$ arises from a prior probability distribution $\Pi$ on $C^3_0(\O_0)$.  We will construct $\Pi$ so that it is supported in a subset of $C_0^3(\O_0)$ of the following form:

\begin{Definition}\label{CMb}
    Let $\ell,M>0$ and $\beta \geq 3$. We define $\CMb$ as the set of all functions $c \in C_0^\beta(\O_0)$ that satisfy the following conditions:
    \begin{enumerate}[(i)]
        \item The metric $g_{n_c}=n_c^{2}\gb$ is a simple metric on $\Oc$.
        \item The derivative of $\exp_{n_c}(x,\cdot)$ satisfies
        $$D_w \exp_{n_c}(x,w) \succ \ell,$$
        for all $x\in \Oc$ and $w\in \dom(\exp_{n_c}(x,\cdot))$.
        \item $\|c\|_{C^{\lfloor \beta\rfloor, \beta - \lfloor \beta \rfloor}(\overline{\O}_0)} < M$.
    \end{enumerate}
\end{Definition}
 We will show in Section 2 that if $c \in \CMb$, the corresponding conformal parameter $n_c \in \N_{\l,\L,\ell,L}(\O_0)$ for appropriate choices of $\l,\L$ and $L$. The precise construction of $\Pi$ is described in Section 3.

\begin{remark}[Notation]
    Henceforth, we will denote $C^{\lfloor \beta\rfloor, \beta - \lfloor \beta \rfloor}$ by simply $C^\beta$.
\end{remark}

\begin{remark}\label{open}
    It is known that small perturbations of simple metrics are again simple. Therefore, $\CMb$ is an open subset of $C^\beta_0(\O_0)$. 
\end{remark}

The pairs of boundary points $(X_i,Y_i)$ between which the distance measurements are to be made are chosen according to the rule
\[
(X_i,Y_i) \stackrel{\textrm{i.i.d.}}{\sim} \mu, \]
where $\mu$ is the uniform probability measure on $\p \O \times \p \O$ induced by the background metric $\bar g$. The actual distance measurements between these points are assumed to be of the form
\[
\Gamma_i = e^{\e_i}\Gamma_c(X_i,Y_i),  \]
where $\e_i$ are i.i.d. $N(0, \sigma^2)$ normal random variables ($\sigma >0$ is fixed) that are also independent of $(X_j,Y_j)_{j=1}^N$. For simplicity, we will henceforth assume that $\sigma = 1$ without loss of generality. Define
\[
Z_c = \log \Gamma_c, \]
and for $i= 1, \ldots, N$, 
\begin{align*}
	Z_i &= \log \Gamma_i\\
	&= Z_c(X_i,Y_i) +\e_i.
\end{align*}
All of our measurements can be summarized using the data vector 
\begin{equation} \label{obs}
	\mathcal{D}_N = (X_i,Y_i,Z_i)_{i=1}^N \in (\p \O \times \p \O \times \R)^N. 
\end{equation}
For convenience, let us define $\X = \p \O \times \p \O \times \R$. 

Next, let $P^N_c$ denote the probability law of $\mathcal{D}_N|c$. It is easy to see that $P^N_c = \times_{i=1}^N P^{(i)}_c$, where for each $i \in \{1, \ldots , N\}$, $P^{(i)}_c$ is equal to the probability law of $(X_i,Y_i,Z_i)$. More explicitly, for each $i\in \{1,\ldots , N\}$,
\[
dP^{(i)}_c(x,y,z) = p_cd\mu(x,y)dz, \]
where
$$ p_c(x,y,z) = \frac{1}{\sqrt{2\pi}}\exp\left\{-\frac{1}{2}\left(z-Z_c(x,y)\right)^2\right\}.$$

We denote the posterior distribution of $c|\mathcal{D}_N$ by $\Pi(\cdot |\mathcal{D}_N)$. By Corollary \ref{forward-c}, the map $(c,(x,y,z))\mapsto p_c(x,y,z)$ is jointly Borel-measurable from $C^3_0(\O_0)\times \X$ to $\R$. So it follows from standard arguments (see \cite[p.~7]{GvdV17} ) that the posterior distribution is well-defined and takes the form
\[
\Pi(A|\D_N) = \frac{\int_A \prod_{i=1}^N p_c(X_i,Y_i,Z_i)d\Pi(c)}{\int \prod_{i=1}^N p_c(X_i,Y_i,Z_i) d\Pi(c)} \]
for any Borel set $A \subseteq C^3_0(\O_0)$. Our posterior estimator for $c$ will be the posterior mean
\begin{equation}\label{postmean}
	\overline{c}_N = \E^{\Pi}[c|\mathcal{D}_N]. 
\end{equation}

\begin{Theorem}\label{mainth}
  Suppose that the true parameter $c_0$ is smooth and compactly supported in $\O_0$, and is such that $g_{n_{c_0}}$ is a simple metric on $\Oc$. Then there is a well defined prior distribution $\Pi$ on $C^3_0(\O_0)$ such that the posterior mean $\overline{c}_N$ satisfies
	\[
	\|\overline{c}_N - c_0\|_{L^2(\O)} \to 0  \]
    in $P^N_{c_0}$- probability, as $N \to \infty$.
\end{Theorem}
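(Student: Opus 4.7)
The plan is to follow Stuart's infinite-dimensional Bayesian inversion paradigm together with the posterior contraction theorems of Ghosal and van der Vaart, using the forward stability estimate (Theorem \ref{forward}) to verify a prior-mass condition and the inverse stability estimate (Theorem \ref{inverse}) to convert the resulting contraction into convergence in the $L^2(\O)$ norm of the conformal parameter. First I would take $\Pi$ to be (a rescaling of) a zero-mean Gaussian process on $\O_0$ whose Cameron--Martin space embeds into $C^\beta_0(\O_0)$ for some $\beta > 3$, for example a series prior in Dirichlet--Laplacian eigenfunctions with appropriately decaying variances; rescaling by a suitable power of $N$ concentrates draws inside a small $C^\beta$ ball around $c_0$. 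Since $c_0 \in \CMb$ for some $M$ large (by assumption) and $\CMb$ is open in $C^\beta_0(\O_0)$ (Remark \ref{open}), a further mild conditioning step ensures that the effective support of $\Pi$ lies inside $\CMb$, so all earlier estimates can be applied on the support of the posterior.

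Next I would verify the three standard conditions of \cite[Ch.~8]{GvdV17}. A direct computation gives $K(P^{(i)}_{c_0}, P^{(i)}_c) = \tfrac12\|Z_{c_0}-Z_c\|_{L^2(\mu)}^2$, and since $\Gamma_c$ is uniformly bounded away from $0$ and $\infty$ on $\CMb$, one has $\|Z_{c_0}-Z_c\|_{L^2(\mu)} \lesssim \|\Gamma_{c_0}-\Gamma_c\|_{L^2(\p\O\times\p\O)}$. Applying Theorem \ref{forward} together with the elementary estimate $\|n_c - n_{c_0}\|_{L^2(\O)} \lesssim \|c-c_0\|_{L^\infty(\O)}$ (valid on the bounded sieve) then reduces the KL prior-mass condition to a standard small-ball estimate for a Gaussian prior in sup norm. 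Borell's Gaussian concentration inequality provides a sieve $\mathcal{B}_N\subseteq \CMb$ with $\Pi(\mathcal{B}_N^c)$ exponentially small, and a standard H\"older entropy bound controls the metric entropy of $\mathcal{B}_N$. The general contraction theorem then produces a rate $\e_N\to 0$ with
\[
\Pi\bigl(c:\|\Gamma_c-\Gamma_{c_0}\|_{L^2(\p\O\times\p\O)}>\e_N \,\big|\, \D_N\bigr)\to 0
\]
in $P^N_{c_0}$-probability.

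The main obstacle is bridging this $L^2$-type contraction to the $H^1$-type norm $\|d_\xi(\Gamma_{n_1}-\Gamma_{n_2})\|_{L^2}$ that appears in Theorem \ref{inverse}. To get around this, I would work on the sieve $\mathcal{B}_N$, on which $c$ is uniformly bounded in $C^\beta$; from the $C^\beta$ regularity of the conformal factor one derives uniform H\"older bounds on $\Gamma_c$ and its boundary derivative $d_\xi\Gamma_c$ (geodesics depend smoothly on the metric in the simple regime). A standard interpolation inequality then gives
\[
\|d_\xi(\Gamma_c-\Gamma_{c_0})\|_{L^2(\p\O\times\p\O)} \lesssim \|\Gamma_c-\Gamma_{c_0}\|_{L^2(\p\O\times\p\O)}^{\theta}
\]
for some $\theta\in(0,1)$. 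Combining this with Theorem \ref{inverse} yields posterior contraction at rate $\e_N^{\theta}$ for $\|n_c-n_{c_0}\|_{L^2(\O)}$, and since $n_c = e^c$ with $c$ uniformly bounded on the sieve, this transfers to contraction of $\|c-c_0\|_{L^2(\O)}$.

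Finally, to convert posterior contraction into convergence of the posterior mean, I would use that $c$ is uniformly bounded in $L^2(\O)$ on the effective support of $\Pi$, so Jensen's inequality together with the contraction statement and a routine truncation to $\mathcal{B}_N$ gives
\[
\|\overline{c}_N - c_0\|_{L^2(\O)} \leq \E^{\Pi}\bigl[\|c-c_0\|_{L^2(\O)} \,\big|\, \D_N\bigr] \to 0
\]
in $P^N_{c_0}$-probability, completing the proof.
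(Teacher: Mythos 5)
Your proposal is correct and follows essentially the same route as the paper: a Gaussian prior restricted to the open set $\CMb$, the general Ghosal--van der Vaart contraction machinery with the forward estimate supplying the Kullback--Leibler prior-mass bound and a H\"older-space entropy bound supplying the complexity condition, an interpolation bridge from the $L^2$-type contraction of $\Gamma_c$ to the $H^1$ norm required by the inverse estimate, and Jensen's inequality to pass to the posterior mean. The paper realizes your interpolation step as $\|Z_c-Z_{c_0}\|_{H^1}\leq\|Z_c-Z_{c_0}\|_{L^2}^{1/2}\|Z_c-Z_{c_0}\|_{H^2}^{1/2}$ combined with a uniform $H^2$ bound on $Z_{c_1}-Z_{c_2}$ over $\CMb$ (Theorem \ref{h2bounds}), which is precisely the rigorous form of the uniform bounds on $d_\xi\Gamma_c$ and its H\"older seminorm that you assert (and which require the second-derivative estimates of Lemma \ref{derivative estimates}, harmless near the diagonal only because $\Gamma_c-\Gamma_{c_0}$ vanishes there).
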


A more precise version of this result is stated in Theorem \ref{main} in Section \ref{stats}, which in fact requires significantly weaker regularity assumptions on $c_0$. It also specifies an explicit $N^{-\omega}$ rate of convergence, where $\omega$ is a positive constant that can be made arbitrarily close to $1/4$.

To prove Theorem \ref{mainth}, we apply the analytic techniques developed in recent consistency studies of statistical inversion of the geodesic X-ray transform \cite{MNP19} and related non-linear problem arising in polarimetric neutron tomography \cite{MNP21a, MNP21b}. The forward and inverse stability estimates for the measurement operators (like the ones in Theorems \ref{inverse} and \ref{forward}) play a key role in the arguments of these references. 

The analysis of theoretical guarantees for statistical inverse problems is currently a very active topic. Recent progress for various linear and non-linear inverse problems include \cite{DaS16, DuS16, AN19, MNP19, Nickl20, MNP21a, MNP21b, NP21, BN21, Bohr22}. See also the recent lecture notes \cite{Nickl22}.
\bigskip

The paper is structured as follows. In Section 2, we establish the forward and inverse stability estimates for the boundary distance function. Section 3 is devoted to proving the statistical consistency of Bayesian inversion for the boundary rigidity problem.


\section{Forward and Inverse continuity estimates}

In order to prove the statistical consistency of the proposed Bayesian estimator, we need to establish quantitative upper and lower bounds on the magnitude of change in the boundary distance function $\G_n$ corresponding to a change in the conformal parameter $n$ of the metric. This is the content of Theorems \ref{inverse} and \ref{forward}, which we will prove in this section. We will also use these estimates to establish similar bounds for the map $c \mapsto Z_c = \log \G_c$, when $c$ belongs to the parameter space $\CMb$ defined in Definition \ref{CMb}.

\subsection{Stability estimates}

We begin with the proof of Theorem \ref{inverse}. As we noted in the introduction, such an estimate has already been proved for dimension $m=2$ by Mukhometov in \cite{Mu75b}. For general $m\geq 2$, we have the following result by Beylkin \cite{Be79}. Also see \cite[Lemma 4]{Mu81}.

\begin{Theorem}[{\cite{Be79}}]\label{beylkin} Let $n_1,n_2 \in C^3(\Oc)$ be such that $g_{n_1},g_{n_2}$ are simple metrics on $\Oc$. Then 
	\begin{equation}\label{hds}
		\begin{split}
			\int_{\O} (n_1-n_2) & (n_1^{m-1}-n_2^{m-1})\,d\textrm{Vol}_{\gb}\\
			& \leq C_m \int_{\p \O_{\xi}\times\p \O_{\eta}} \sum_{a+b=m-2} d_\xi (\Gamma_{n_1}-\Gamma_{n_2})\wedge d_\eta (\Gamma_{n_1}-\Gamma_{n_2})\wedge (d_\xi d_\eta \Gamma_{n_1})^a \wedge (d_\xi d_\eta \Gamma_{n_2})^b\, ,
		\end{split}
	\end{equation}
	where $d\textrm{Vol}_{\gb}$ is the Riemannian volume form induced by $\gb$, and $d_\xi$ and $d_\eta$ represent the exterior derivative operators on $\p \O$ with respect to $\xi$ and $\eta$ respectively. Given local coordinates $(\xi^1, \ldots , \xi^{m-1})$ for $\xi$ and $(\eta^1, \ldots , \eta^{m-1})$ for $\eta$, we have $d_\xi = d\xi^i \frac{\p}{\p \xi^i}$, $d_\eta = d\eta^j\frac{\p}{\p \eta^j}$, and $d_\xi d_\eta=d\xi^i\wedge d\eta^j \frac{\p^2}{\p \xi^i \p \eta^j}$. The constant 
	$$C_m=\frac{(-1)^{\frac{(m-1)(m-2)}{2}}\Gamma(m/2)}{2\pi^{m/2}(m-1)!},$$
	depends only on the dimension $m$. 
\end{Theorem}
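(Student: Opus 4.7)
My approach would be to derive Beylkin's inequality from a Santal\'o-type integral identity and then polarize between the two metrics. For each simple conformal metric $g_n$, the endpoint-to-initial-velocity map
\[
\Psi_n: \p\O \times \p\O \setminus \mathrm{diag} \ \longrightarrow \ \partial_+ SM_n, \qquad (\xi,\eta) \ \longmapsto \ (\xi, \dot\gamma^n_{\xi,\eta}(0)),
\]
is a diffeomorphism, where $\partial_+ SM_n$ denotes the inward-pointing $g_n$-unit tangent vectors on $\p\O$. First I would use the eikonal equation $|\nabla_{g_n}\G_n|_{g_n} = 1$ to obtain $d_\xi \G_n(\xi,\eta) = -n(\xi)\langle \dot\gamma^n_{\xi,\eta}(0), \cdot\rangle_{\gb}|_{T_\xi \p\O}$ (and the analogous expression in $\eta$), so that the mixed differential $d_\xi d_\eta \G_n$ encodes the Jacobian of the scattering relation. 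Computing the pullback $\Psi_n^*\omega_n$ of the natural Santal\'o volume form on $\partial_+SM_n$ and doing the bookkeeping on the exterior powers, I would aim to show the pointwise identity $(d_\xi d_\eta \G_n)^{m-1} = c_m\, \Psi_n^*\omega_n$. Combined with Santal\'o's formula for $\int_{SM_n} f\,dV_{g_n}$, this yields the single-metric identity $\int_\O n^m \, d\textrm{Vol}_{\gb} = C_m \int_{\p\O\times\p\O}(d_\xi d_\eta \G_n)^{m-1}$.

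The two-metric inequality would then follow by polarization. Setting $F = \G_{n_1} - \G_{n_2}$, I would carry out an analogous pullback computation using a hybrid parameterization in which the $\xi$-variable is propagated along $g_{n_1}$-geodesics and the $\eta$-variable along $g_{n_2}$-geodesics (alternatively, via a one-parameter homotopy $n_t = (1-t)n_1 + t n_2$ and integration in $t$). This should yield, for each $a+b=m-2$, a mixed identity of the form
\[
\int_\O (n_1 - n_2)^2\, n_1^a n_2^b \, d\textrm{Vol}_{\gb} \ = \ C_m \int_{\p\O\times\p\O} d_\xi F \wedge d_\eta F \wedge (d_\xi d_\eta \G_{n_1})^a \wedge (d_\xi d_\eta \G_{n_2})^b.
\]
Summing over $a+b=m-2$ and applying the algebraic factorization
\[
(n_1-n_2)(n_1^{m-1}-n_2^{m-1}) \ = \ (n_1-n_2)^2 \sum_{a+b=m-2} n_1^a n_2^b
\]
would then recover the stated bound, with any remaining sign ambiguity absorbed into the $(-1)^{(m-1)(m-2)/2}$ factor appearing in $C_m$.

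The main obstacle will be establishing the mixed pullback identity rigorously: because $d_\xi \G_{n_1}$ and $d_\xi \G_{n_2}$ encode initial tangents of two distinct geodesic families through $\xi$, their wedge products intertwine the two scattering geometries in a nontrivial way. Tracking the combinatorics of the $(m-1)$-fold exterior powers, maintaining sign conventions so that each summand realizes a positively-oriented volume form, and converting the resulting algebraic equality into the desired inequality (since the right-hand integrand is not pointwise nonnegative) are the delicate technical points. The homotopy approach via $n_t$ may ultimately be cleaner, reducing the problem to the well-understood derivative of the single-metric identity along a linear family, at the cost of having to control integrability uniformly in $t$ using the simplicity of each intermediate metric.
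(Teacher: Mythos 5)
First, a point of reference: the paper does not prove Theorem \ref{beylkin} at all --- it is imported verbatim from Beylkin \cite{Be79} (see also \cite[Lemma 4]{Mu81}), so there is no internal argument to compare yours against. Judged as a proof of Beylkin's result itself, your sketch contains a genuine gap. The central problem is that you propose to establish, for each pair $a+b=m-2$, an \emph{exact} mixed identity
$\int_\O (n_1-n_2)^2 n_1^a n_2^b\,d\mathrm{Vol}_{\gb} = C_m\int d_\xi F\wedge d_\eta F\wedge(d_\xi d_\eta\G_{n_1})^a\wedge(d_\xi d_\eta\G_{n_2})^b$, and then sum. If such identities held, summation would give \emph{equality} in \eqref{hds}, which is false in general: the theorem is an inequality precisely because the Mukhometov--Beylkin derivation produces the left-hand side \emph{plus} a manifestly nonnegative remainder, and that remainder is discarded. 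Concretely, the classical argument works not with $\G_n$ on $\p\O\times\p\O$ but with the travel-time functions $\tau_{n_j}(x,\eta)$ on $\O\times\p\O$ (solutions of the eikonal equation $|\nabla_x\tau_{n_j}|_{g_{n_j}}=n_j$ in the conformal setting), applies a Pestov/energy-integral identity to the difference $u=\tau_{n_1}-\tau_{n_2}$ over the unit sphere bundle, and uses simplicity (absence of conjugate points) to show the discarded quadratic term --- an angular/fiber-derivative energy of $u$ --- is nonnegative. That positivity mechanism is the heart of the proof and is entirely absent from your plan; neither the ``hybrid parameterization'' (which is not a single change of variables, since the two scattering relations cannot be simultaneously linearized) nor the homotopy $n_t$ supplies it.

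Two secondary issues. The single-metric identity you start from should read
$\int_\O n^m\,d\mathrm{Vol}_{\gb}=C_m\int_{\p\O\times\p\O} d_\xi\G_n\wedge d_\eta\G_n\wedge(d_\xi d_\eta\G_n)^{m-2}$
rather than involving $(d_\xi d_\eta\G_n)^{m-1}$; the degree count is the same (both are top forms on the $2(m-1)$-dimensional product), but only the former has the structure whose polarization matches \eqref{hds}, and the Santal\'o-type computation you describe naturally produces the space-of-geodesics measure, which recovers $\mathrm{Vol}_{g_n}(\O)$ only after the extra factors $d_\xi\G_n\wedge d_\eta\G_n$ (equivalently, a factor of the geodesic length) are inserted. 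Finally, ``absorbing the sign ambiguity into $C_m$'' is not available to you: $C_m$ is a specific constant and the orientation bookkeeping must be carried out, since the right-hand integrand of \eqref{hds} is not pointwise of one sign. Your algebraic factorization $(n_1-n_2)(n_1^{m-1}-n_2^{m-1})=(n_1-n_2)^2\sum_{a+b=m-2}n_1^an_2^b$ is correct and is indeed how the summands on the two sides are matched, but the bridge between them must be the energy identity with its discarded positive term, not a family of exact polarization identities.
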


We will show that when $n_1,n_2 \in \N_{\l,\ell}(\O_0)$, the inequality \eqref{hds} leads to the desired stability estimate.

\begin{Lemma}\label{derivative estimates}
	Let $n \in \N_{\l,\ell}(\O_0)$. Then the corresponding boundary distance function $\G_n$ satisfies 
	$$ |d_{\xi}\G_n(\xi,\eta)|_{\gb} \leq 1, \quad |d_{\eta}\G_n(\xi,\eta)|_{\gb} \leq 1, $$
	and
	$$|\nabla^{\xi}\nabla^{\eta}\G_n(\xi,\eta)|_{\gb} \leq \frac{(1+\ell^{-1})}{\l}\dist_{\gb}(\xi,\eta)^{-1} $$
	for all $\xi,\eta \in \p \O$ with $\xi \neq \eta$. Here, $\nabla^\xi, \nabla^\eta$ denote the covariant derivative operators with respect to $\xi$ and $\eta$ respectively, and $\dist_{\gb}(\xi,\eta)$ is the distance from $\xi$ to $\eta$ with respect to the metric $\gb$.
\end{Lemma}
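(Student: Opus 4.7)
The three estimates separate cleanly: the first two follow immediately from the first variation of arc length, while the third requires a Jacobi field computation.

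For the first-derivative bounds, let $\gamma:[0,T]\to\Oc$ be the $g_n$-unit-speed minimizing geodesic from $\xi$ to $\eta$, with $T=\G_n(\xi,\eta)$. The first variation of arc length gives that the cotangent vector $d_\eta \G_n$ at $\eta$ is $g_n$-dual to $\dg(T)$, so $|d_\eta \G_n|_{g_n^*}=1$ viewed as a one-form on $\Oc$. Restricting to $T_\eta\p\O$ can only decrease this norm, and since $n\equiv 1$ on $\Oc\setminus \O_0\supset \p\O$ the metrics $g_n$ and $\gb$ agree on vectors tangent to $\p\O$, so $|d_\eta \G_n|_{\gb}\leq 1$; the bound for $d_\xi$ is symmetric.

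For the mixed-derivative bound, take a variation $\xi(s)$ along $\p\O$ with $\dot\xi(0)=X\in T_\xi\p\O$, and fix $Y\in T_\eta\p\O$. For each small $s$, let $\gamma_s$ be the $g_n$-geodesic from $\xi(s)$ to $\eta$ of $g_n$-length $T_s$, and set $\Psi(s,t)=\gamma_s(t)$ for $t\in[0,T_s]$. Then $J(t):=\p_s \Psi(s,t)|_{s=0}$ is a Jacobi field along $\gamma=\gamma_0$ with $J(0)=X$, and differentiating $\Psi(s,T_s)=\eta$ in $s$ yields $J(T)=-T'(0)\,\dg(T)$, where $T'(0)=-\langle X,\dg(0)\rangle_{g_n}$ by the first variation. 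A standard computation using the symmetry of mixed covariant derivatives along $\Psi$ and $\ddot\gamma=0$ then gives
\begin{equation*}
	\nabla^\xi_X d_\eta \G_n(\xi,\eta)[Y]=\langle J'(T),Y\rangle_{g_n(\eta)}.
\end{equation*}

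To bound $|J'(T)|_{\gb}$, split $J=J_0+J_1$, where $J_1(t)=-\tfrac{tT'(0)}{T}\dg(t)$ is the tangential Jacobi field with $J_1(0)=0$ and $J_1(T)=J(T)$, and $J_0$ is the Jacobi field with $J_0(0)=X$, $J_0(T)=0$. The tangential piece gives $|J_1'(T)|_{\gb}\leq |X|_{\gb}/T$ directly, using $n(\eta)=1$ and $|T'(0)|\leq |X|_{\gb}$. For $J_0$, reverse the geodesic at $\eta$: the curve $\tau\mapsto\exp_{n,\eta}(\tau w)$ with $w=-\dg(T)$ traces out $\gamma(T-\tau)$, and $K(\tau):=J_0(T-\tau)$ is a Jacobi field along it with $K(0)=0$. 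The radial-Jacobi-field identity $K(\tau)=D\exp_{n,\eta}(\tau w)[\tau K'(0)]$ evaluated at $\tau=T$ reads $X=T\cdot D\exp_{n,\eta}(Tw)[K'(0)]$, so the lower bound in \eqref{operatornorm0} applied at $\eta$ yields $|K'(0)|_{\gb}\leq |X|_{\gb}/(\ell T)$, and hence $|J_0'(T)|_{\gb}=|K'(0)|_{\gb}\leq |X|_{\gb}/(\ell T)$. Combining the two pieces and using $n(\eta)=1$ to pass from $g_n$ to $\gb$,
\begin{equation*}
	|\nabla^\xi_X d_\eta \G_n(\xi,\eta)[Y]|\leq (1+\ell^{-1})\,T^{-1}\,|X|_{\gb}\,|Y|_{\gb}.
\end{equation*}
Finally, for any curve $\sigma$ from $\xi$ to $\eta$ one has $\int n\,|\dot\sigma|_{\gb}\,dt\geq \l\,\int|\dot\sigma|_{\gb}\,dt$, so $T=\G_n(\xi,\eta)\geq \l\,\dist_{\gb}(\xi,\eta)$, which yields the claimed estimate. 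The main obstacle is assembling this mixed-derivative argument: one must identify $\nabla^\xi\nabla^\eta \G_n$ as $\langle J'(T),\cdot\rangle_{g_n}$, handle the tangential correction $J_1$ arising because the variation does not preserve the parameter length $T_s$, and reverse the geodesic so that condition (iii) of Definition \ref{nclass} can be invoked at $\eta$ to extract the factor $\ell^{-1}$.
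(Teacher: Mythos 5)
Your proof is correct. The first-derivative bounds are obtained exactly as in the paper (first variation, then the facts that orthogonal projection onto $T_\eta\p\O$ is norm-nonincreasing and that $g_n=\gb$ on $\p\O$). For the mixed second derivative the paper takes a shorter, more algebraic route: it writes the outgoing unit tangent as $v(\xi,\eta)=-w(\eta,\xi)/\G_n(\xi,\eta)$, where $w(\eta,\cdot)$ is the inverse of $\exp_n(\eta,\cdot)$, differentiates this identity in $\xi$ by the product rule, and bounds the resulting two terms $-D_\xi w(\eta,\xi)/\G_n$ and $v\otimes d_\xi\G_n/\G_n$ by $\ell^{-1}/\G_n$ and $1/\G_n$ respectively, using $\|D_yw\|_{op}<\ell^{-1}$. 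Your Jacobi-field computation produces exactly the same two terms: $J_0'(T)=-\tfrac1T\bigl(D\exp_{n,\eta}\big|_{Tw}\bigr)^{-1}X$ is the first and $J_1'(T)=-\tfrac{T'(0)}{T}\dg(T)$ is the second, so the splitting $J=J_0+J_1$ (tangential piece vs.\ piece vanishing at $\eta$) is the variational shadow of the paper's product-rule splitting. What your route buys is that the appearance of $(D\exp_{n,\eta})^{-1}$ — and hence of condition (iii) of Definition \ref{nclass} invoked at the reversed base point $\eta$ — is derived from first principles via the radial Jacobi field identity, rather than quoted as the bound on the derivative of the inverse map; what it costs is the extra bookkeeping with the symmetry lemma and the non-rectangular parameter domain of $\Psi$ (harmless, since one can define $\Psi(s,t)=\exp_n(\xi(s),tv_s)$ on an open rectangle). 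Both arguments, yours and the paper's, tacitly identify the $\gb$-norm of the resulting $(1,1)$-tensor with the operator norm of the associated linear map, so you lose nothing relative to the published proof on that point.
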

\begin{proof}
	Given $\xi,\eta \in \p \O$ with $\xi \neq \eta$, let $v(\xi,\eta)$ denote the unit vector (with respect to $g_n$) at $\eta$ tangent to the geodesic from $\xi$ to $\eta$. It follows from the First Variation Formula (cf. \cite{Lee-RM}, Theorem 6.3) that the gradient (with respect to $g_n$) of $\G_n(\xi,\cdot)$ is given by
	\begin{equation}\label{gradeta}
		\grad_{\eta}\G_n(\xi,\eta) = \Pi_{\eta} v(\xi,\eta),
	\end{equation} 
	where $\Pi_{\eta} : T_\eta \Oc \to T_\eta \p \O$ is the orthogonal projection map onto the tangent space of the boundary. Since $g_n = \gb$ on $\p \O$, it follows immediately that 
	$$|d_{\eta}\G_n(\xi,\eta)|_{\gb} = |\grad_{\eta}\G_n(\xi,\eta)|_{g_n} = \left|\Pi_\eta v(\xi,\eta)\right|_{g_n} \leq |v(\xi,\eta)|_{g_n}=1. $$
	Similar arguments show that $|d_{\xi}\G_n(\xi,\eta)|_{\gb}\leq 1$ as well. 
	
	Next, let $(\xi^1, \ldots , \xi^{m-1})$ and $(\eta^1, \ldots , \eta^{m-1})$ be local coordinates for $\p \O$ around $\xi$ and $\eta$ respectively. We can extend these coordinate charts to boundary normal coordinates $(\xi^1, \ldots , \xi^m)$ and $(\eta^1, \ldots , \eta^m)$ by taking $\xi^m$ and $\eta^m$ to be the corresponding distance functions from the boundary. With respect to these coordinates, we may rewrite \eqref{gradeta} as
	\begin{equation}\label{gradetac}
	    \grad_\eta \G_n(\xi,\eta) = \sum_{j=1}^{m-1} v^j(\xi,\eta)\frac{\p}{\p \eta^j}.
	\end{equation}
	We can extend both sides of this equality to $(1,0)$-tensor fields on $\p \O_\xi \times \p \O_\eta$, while maintaining the equality. Taking covariant derivatives of both sides with respect to $\xi$, we get
	\begin{equation}\label{dxigradetac}
	    \nabla^\xi \grad_\eta \G_n(\xi,\eta) = \sum_{i,j=1}^{m-1}\frac{\p v^j}{\p \xi^i}(\xi,\eta) \frac{\p}{\p \eta^j}\otimes d\xi^i.
	\end{equation}
    Here, we have used the fact that the product connection on $\p \O_{\xi}\times \p \O_{\eta}$ satisfies $\nabla_{\p_{\xi_i}}\p_{\eta_j}=0$ for all $i,j$. Recall that $g_n$ is a simple metric, and its exponential map $\exp_n(x,\cdot)$ at any $x \in \Oc$ is a diffeomorphism onto $\Oc$. Let $w(x,\cdot):\Oc \to T_x\Oc$ denote its inverse map. Since $D_v \exp_n(x,v)\succ \ell$ for all $v$ in the domain of $\exp_n(x,\cdot)$, we have
	\begin{equation}\label{dwnorm}
	    \|D_y w(x,y)\|_{op} < \ell^{-1} \qquad \textrm{for all } y \in \Oc.
	\end{equation}
	Now observe that we have the identity
	$$ v(\xi,\eta) = -\frac{w(\eta, \xi)}{\G_n(\xi,\eta)}. $$
	So by \eqref{gradetac} and \eqref{dxigradetac}, 
	\begin{align}
		\nabla^\xi \grad_{\eta}\G_n(\xi,\eta) &=-\sum_{i,j=1}^{m-1}\left\{\frac{1}{\G_n(\xi,\eta)}\frac{\p w^j (\eta, \xi)}{\p \xi^i}-\frac{w^j(\eta,\xi)}{\G_n(\xi,\eta)^2}\frac{\p \G_n(\xi,\eta)}{\p \xi^i}\right\} \frac{\p}{\p \eta^j}\otimes d\xi^i \nonumber \\
		&= -\frac{1}{\G_n(\xi,\eta)}\left\{\sum_{i,j=1}^{m-1}\frac{\p w^j (\eta, \xi)}{\p \xi^i} \frac{\p}{\p \eta^j} \otimes d\xi^i\right\} + \frac{1}{\G_n(\xi,\eta)}v(\xi,\eta)\otimes d_\xi \G_n(\xi,\eta). \label{coordfree}
	\end{align}
	Observe that $\sum_{i,j=1}^{m-1}\frac{\p w^j (\eta, \xi)}{\p \xi^i} \frac{\p}{\p \eta^j} \otimes d\xi^i$ is precisely the tensor form of the linear map 
    $$\Pi_\eta \circ D_y w(\eta, y)\big|_{y=\xi}\circ \Pi_\xi,$$
    where $\Pi_\xi$ and $\Pi_\eta$ are, as before, orthogonal projections from $T_\xi \Oc \to T_\xi \p \O$ and $T_\eta \Oc \to T_\eta \p \O$ respectively. Therefore,
	$$ \left|\sum_{i,j=1}^{m-1}\frac{\p w^j (\eta, \xi)}{\p \xi^i} \frac{\p}{\p \eta^j} \otimes d\xi^i\right|_{\gb} \leq \left\|D_y w(\eta, y)\big|_{y=\xi}\right\|_{op} < \ell^{-1}. $$
	Combining this with \eqref{coordfree}, we get
	\begin{align*}
	    |\nabla^\xi d_\eta \G_n(\xi,\eta)|_{\gb} &= |\nabla^\xi \grad_\eta \G_n(\xi,\eta)|_{\gb} \\
	    &\leq \frac{\ell^{-1}}{\G_n(\xi,\eta)} + \frac{|v(\xi,\eta)|_{\gb}|d_\xi \G_n(\xi,\eta)|_{\gb}}{\G_n(\xi,\eta)} \\
	    &\leq \frac{(1+\ell^{-1})}{\G_n(\xi,\eta)}.
	\end{align*}
	Finally, applying the simple estimate
	$$ \dist_{\gb}(\xi,\eta) \leq \frac{1}{\lambda}\G_n(\xi,\eta), $$
	we get
	$$|\nabla^\xi \nabla^\eta\G_n(\xi,\eta)|_{\gb} = |\nabla^\xi d_\eta\G_n(\xi,\eta)|_{\gb} \leq \frac{(1+\ell^{-1})}{\lambda}\dist_{\gb}(\xi,\eta)^{-1}.$$
	This completes the proof. 
\end{proof}

With these estimates in hand, we're now ready to prove Theorem \ref{inverse}.

\begin{proof}[Proof of Theorem \ref{inverse}]
	Consider the inequality \eqref{hds} from Theorem \ref{beylkin}. For $n_1,n_2 \in \N_{\l,\ell}(\O_0)$, the left hand side becomes
	\begin{equation}\label{lowboundn}
		\int_{\O} (n_1-n_2)^2(n_1^{m-2}+n_1^{m-3}n_2+\cdots +n_2^{m-2})d \textrm{Vol}_{\gb} \geq (m-1)\l^{m-2}\|n_1-n_2\|_{L^2(\O)}^2.
	\end{equation}
    Now consider the right hand side of \eqref{hds}. By Lemma \ref{derivative estimates},
    $$|d_\xi d_\eta \G_n|_{\gb} = \left|\textrm{Alt}\left(\nabla^\xi \nabla^\eta \G_n\right)\right|_{\gb} \leq \frac{(1+\ell^{-1})}{\l}\dist_{\gb}(\xi,\eta)^{-1}.$$
	Therefore, the right hand side of \eqref{hds} is bounded above by
	\begin{align*}
		& \phantom{\leq }|C_m|\int_{\p \O \times \p \O} |d_{\xi}(\G_{n_1}-\G_{n_2})|_{\gb}|d_\eta(\G_{n_1}-\G_{n_2})|_{\gb}\sum_{a+b=m-2}|d_{\xi}d_{\eta}\G_{n_1}|_{\gb}^a|d_{\xi}d_{\eta}\G_{n_2}|_{\gb}^b\,  d\sigma_{\gb} \\
		&\leq (m-1)|C_m|\frac{(1+\ell^{-1})^{m-2}}{\l^{m-2}} \int_{\p \O \times \p \O} |d_{\xi}(\G_{n_1}-\G_{n_2})|_{\gb}|d_\eta (\G_{n_1}-\G_{n_2})|_{\gb}|\dist_{\gb}(\xi,\eta)|^{2-m}\, d\sigma_{\gb},  
	\end{align*}
	where $d\sigma_{\gb}$ is the surface measure on $\p \O \times \p \O$ induced by $\gb$. Observe that by Remark \ref{d}, we have  $(\G_{n_1}-\G_{n_2})(\xi,\eta) =0$ for all $\xi,\eta \in \p \O$ with $\dist_{\gb}(\xi,\eta)<\delta$. Therefore, the above expression is further bounded above by
    \begin{align*}
        & \phantom{\leq } (m-1)|C_m|\frac{(1+\ell^{-1})^{m-2}}{\l^{m-2}}\delta^{2-m}\int_{\p \O \times \p \O} |d_{\xi}(\G_{n_1}-\G_{n_2})|_{\gb}|d_\eta (\G_{n_1}-\G_{n_2})|_{\gb}|d\sigma_{\gb}. \\
        &\lsim_{m,\delta, \ell}\lambda^{2-m}\left( \|d_{\xi}(\G_{n_1}-\G_{n_2})\|^2_{L^2(\p \O \times \p \O)} + \|d_{\eta}(\G_{n_1}-\G_{n_2})\|^2_{L^2(\p \O \times \p \O)}\right)\\
        & \lsim_{m,\delta, \ell} \lambda^{2-m} \|d_{\xi}(\G_{n_1}-\G_{n_2})\|^2_{L^2(\p \O \times \p \O)}
    \end{align*}
	since $\|d_\xi(\G_{n_1}-\G_{n_2})\|_{L^2} = \|d_\eta(\G_{n_1}-\G_{n_2})\|_{L^2}$ by symmetry. Combining this with \eqref{lowboundn}, we get
	$$\|n_1-n_2\|^2_{L^2(\O)} \lsim_{m,\delta,\ell} \lambda^{2(2-m)}\|d_{\xi}(\G_{n_1}-\G_{n_2})\|^2_{L^2(\p \O \times \p \O)} $$
	and the theorem follows.
\end{proof}

Recall that we parametrized the conformal parameter $n$ of the metric $g_n$ by a function $c$ belonging to the parameter space $\CMb$, as defined in \eqref{ncdef}. We assumed that our input data consists of finitely many measurements of the function $Z_c = \log \G_{c}$. In the following corollary, we translate Theorem \ref{inverse} into stability estimates for the map $c \mapsto Z_c$ using simple Lipschitz estimates for the exponential function: For all $x,y \in [M_1,M_2]$,
\begin{equation}\label{lipexp}
    e^{M_1}|x-y| \leq |e^x-e^y| \leq e^{M_2}|x-y|.
\end{equation}
This immediately implies that for all $c_1,c_2\in \CMb$,
\begin{equation}\label{link-L2}  
	  e^{-M}\|c_1-c_2\|_{L^2(\O_0)} \leq \|n_{c_1}-n_{c_2}\|_{L^2(\O)} \leq e^M\|c_1-c_2\|_{L^2(\O_0)}. 
\end{equation}  

\begin{Corollary}\label{inverse-c}
	For any $M>0$, there exists a constant $C_1' = C_1'(\O,\O_0,\gb,\ell,M)>0$ such that
	\[
	\|c_1-c_2\|_{L^2(\O_0)} \leq C_1'\|Z_{c_1}-Z_{c_2}\|_{H^1(\p \O \times \p \O)} \]
	for all $c_1,c_2 \in \CM$.
\end{Corollary}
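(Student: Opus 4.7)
The plan is to chain three ingredients: the inverse stability bound of Theorem \ref{inverse}, the bi-Lipschitz comparison \eqref{link-L2} between $c_1-c_2$ and $n_{c_1}-n_{c_2}$, and a pointwise product-rule identity that transfers $d_\xi(\Gamma_{c_1}-\Gamma_{c_2})$ over to $d_\xi(Z_{c_1}-Z_{c_2})$ via the relation $\Gamma_c = e^{Z_c}$.

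First I would note that for $c \in \CM$ the associated conformal factor $n_c$ lies in $\N_{\l,\ell}(\O_0)$ with $\l = e^{-M}$: simplicity of $g_{n_c}$ and the lower bound $D_v\exp_{n_c} \succ \ell$ are conditions (i)--(ii) of Definition \ref{CMb}, while $\|c\|_{C^3} < M$ forces $e^{-M} < n_c < e^M$ on $\O_0$. Applying Theorem \ref{inverse} to the pair $n_{c_1},n_{c_2}$ and combining with the lower half of \eqref{link-L2} yields
\[
\|c_1 - c_2\|_{L^2(\O_0)} \;\leq\; e^{M}\|n_{c_1} - n_{c_2}\|_{L^2(\O)} \;\leq\; C\,\|d_\xi(\Gamma_{c_1} - \Gamma_{c_2})\|_{L^2(\p\O\times\p\O)},
\]
where $C$ depends only on $\O,\O_0,\gb,\ell,M,m$. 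It remains to dominate the right-hand side by $\|Z_{c_1} - Z_{c_2}\|_{H^1(\p\O\times\p\O)}$.

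Differentiating $\Gamma_c = e^{Z_c}$ and adding and subtracting,
\[
d_\xi(\Gamma_{c_1} - \Gamma_{c_2}) \;=\; \Gamma_{c_1}\, d_\xi(Z_{c_1} - Z_{c_2}) \;+\; (\Gamma_{c_1} - \Gamma_{c_2})\, d_\xi Z_{c_2}.
\]
By Remark \ref{d}, both factors $\Gamma_{c_1}-\Gamma_{c_2}$ and $d_\xi(\Gamma_{c_1}-\Gamma_{c_2})$ vanish identically on $\{\dist_{\gb}(\xi,\eta)<\delta\}$, so it suffices to control the right-hand side on the complementary region $\{\dist_{\gb}(\xi,\eta) \geq \delta\}$. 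There, $e^{-M}\delta \leq \Gamma_c \leq e^{M}\diam_{\gb}(\Oc)$; the Lipschitz bound \eqref{lipexp} then gives $|\Gamma_{c_1}-\Gamma_{c_2}| \leq e^{M}\diam_{\gb}(\Oc)\,|Z_{c_1}-Z_{c_2}|$, while writing $|d_\xi Z_{c_2}| = |\Gamma_{c_2}|^{-1}|d_\xi \Gamma_{c_2}|$ and invoking Lemma \ref{derivative estimates} bounds $|d_\xi Z_{c_2}| \leq (e^{-M}\delta)^{-1}$. Substituting produces a pointwise estimate of the form
\[
|d_\xi(\Gamma_{c_1}-\Gamma_{c_2})| \;\lsim_{\O,\O_0,\gb,M}\; |d_\xi(Z_{c_1}-Z_{c_2})| \,+\, |Z_{c_1}-Z_{c_2}|,
\]
and squaring and integrating over $\p\O\times\p\O$ yields the required $H^1$ bound, which chains with the previous display to finish the proof.

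The only delicate point is the logarithmic singularity of $Z_c = \log\Gamma_c$ as $\xi \to \eta$, where $\Gamma_c \to 0$ and both the pointwise estimates $|d_\xi Z_c| \leq C$ and $|\Gamma_{c_1}-\Gamma_{c_2}| \lesssim |Z_{c_1}-Z_{c_2}|$ degenerate. This is precisely the obstacle that Remark \ref{d} resolves for free: because $n_{c_1}$ and $n_{c_2}$ coincide with $\gb$ near the boundary, the integrand $d_\xi(\Gamma_{c_1}-\Gamma_{c_2})$ already vanishes on the diagonal strip $\{\dist_{\gb}(\xi,\eta)<\delta\}$, confining the analysis to a region where $\Gamma_c$ is uniformly bounded away from zero. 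The rest is routine bookkeeping.
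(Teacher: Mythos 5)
Your proposal is correct and follows essentially the same route as the paper: apply Theorem \ref{inverse} together with \eqref{link-L2}, then use the product-rule identity $d_\xi(\Gamma_{c_1}-\Gamma_{c_2}) = e^{Z_{c_1}}d_\xi(Z_{c_1}-Z_{c_2})+(e^{Z_{c_1}}-e^{Z_{c_2}})d_\xi Z_{c_2}$ combined with Remark \ref{d}, the bounds $e^{-M}\delta \leq \Gamma_{c_j} \leq e^M\diam_{\gb}(\O)$ on the support, \eqref{lipexp}, and $|d_\xi\Gamma_{c_j}|_{\gb}\leq 1$ from Lemma \ref{derivative estimates}. The paper's proof is the same argument with the same constants, so there is nothing to add.
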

\begin{proof}
	Let $c_1,c_2 \in \CM$. Then $n_{c_1}, n_{c_2} \in \N_{\l,\ell}(\O_0)$ for $\l = e^{-M}$. So it follows from Theorem \ref{inverse} that 
	\begin{equation}\label{c2.3temp}
		\|n_{c_1}-n_{c_2}\|_{L^2(\O)}\leq C_1e^{(m-2)M}\|d_\xi(\G_{c_1}-\G_{c_2})\|_{L^2(\p \O \times \p \O)}. 
	\end{equation}
    By \eqref{link-L2}, the left hand side of the above equation is bounded below by $e^{-M}\|c_1-c_2\|_{L^2(\O_0)}$. Now, rewrite $d_{\xi}(\G_{c_1}-\G_{c_2})$ as
	\begin{equation*}
		\begin{split}
			d_{\xi}(\G_{c_1}-\G_{c_2}) & = d_\xi(e^{Z_{c_1}}-e^{Z_{c_2}})\\
			& = e^{Z_{c_1}}d_{\xi}Z_{c_1}-e^{Z_{c_2}}d_{\xi}Z_{c_2} \\
			& = e^{Z_{c_1}}d_{\xi}(Z_{c_1}-Z_{c_2})+(e^{Z_{c_1}}-e^{Z_{c_2}})d_\xi Z_{c_2}. 
		\end{split}
	\end{equation*}
	It follows from Remark \ref{d} that if $(\xi,\eta) \in \supp(\G_{c_1}-\G_{c_2})$, we have $\dist_{\gb}(\xi,\eta)\geq \delta$, and consequently,
	\[
	e^{-M}\delta \leq \G_{c_j}(\xi,\eta) \leq e^M\diam_{\gb}(\O), \qquad j=1,2.
	\]
	Therefore, by applying \eqref{lipexp} along with the fact that $|d_\xi\G_{c_j}|_{\gb}\leq 1$ by Lemma \ref{derivative estimates}, we get
	\begin{equation*}
		\begin{split}
			|d_\xi(\G_{c_1}-\G_{c_2})|_{\gb} & \leq |\G_{c_1}||d_\xi(Z_{c_1}-Z_{c_2})|_{\gb}+|\G_{c_1}-\G_{c_2}||d_\xi\G_{c_2}|_{\gb}/|\G_{c_2}|\\
            &\leq e^M\diam_{\gb}(\O)|d_\xi(Z_{c_1}-Z_{c_2})|_{\gb}+\frac{|e^{Z_{c_1}}-e^{Z_{c_2}}|}{e^{-M}\delta} \\
			&\leq e^M\diam_{\gb}(\O)|d_\xi(Z_{c_1}-Z_{c_2})|_{\gb}+\frac{e^M\diam_{\gb}(\O)}{e^{-M}\delta}|Z_{c_1}-Z_{c_2}|, 
		\end{split}
	\end{equation*}
	where $\diam_{\gb}(\O)$ denotes the diameter of $\O$ with respect to the metric $\gb$. This further implies
	$$ \|d_{\xi}(\G_{c_1}-\G_{c_2})\|_{L^2(\p \O \times \p \O)}  \lsim_{\O,\gb, \delta, \ell, M} \|Z_{c_1}-Z_{c_2}\|_{H^1(\p \O \times \p \O)}. $$
	Combining this with \eqref{link-L2}  and \eqref{c2.3temp}, we get
	$$ \|c_1-c_2\|_{L^2(\O_0)} \lsim_{\O,\gb, \delta, \ell, M} \|Z_{c_1}-Z_{c_2}\|_{H^1(\p \O \times \p \O)}. $$
	This completes the proof.
\end{proof}

\subsection{Forward continuity estimates}
We now move on to the proof of Theorem \ref{forward}. The key idea is to use \emph{upper bounds} on $D_v \exp_{n_j}(x,v)$ to control $\|\G_{n_1}-\G_{n_2}\|_{L^2}$ with respect to $\|n_1-n_2\|_{L^2}$. 

We begin by introducing some notation. Let $S\Oc$ denote the unit sphere bundle on $\Oc$, that is,
$$S\Oc = \{(x,v) \in T\Oc \ : \ |v|_{\gb}=1 \}.$$
The boundary of $S\Oc$ consists of unit tangent vectors at $\p\O$. Specifically,
$$\p S\Oc = \{(x,v) \in S\Oc \ : \ x \in \p\O\}. $$
Let $\nu$ denote the inward unit normal vector field along $\p \O$ with respect to the metric $\gb$. We define the bundles of \emph{inward pointing} and \emph{outward pointing} unit tangent vectors on $\p\O$ as follows:
\begin{align*}
	\p_+S\Oc &:= \left\{ (\xi,v) \in \p S\Oc \ : \ \langle v, \nu_{\xi}\rangle_{\gb} \geq 0 \right\}, \quad \textrm{and } \\
	\p_{-}S\Oc &:= \left\{ (\xi,v) \in \p S\Oc \ : \ \ \langle v, \nu_{\xi}\rangle_{\gb} \leq 0 \right\}.
\end{align*}
We also set
$$\p_0S\Oc := \p_{+}S\Oc \cap \p_{-}S\Oc.$$
This coincides with $S\p\O$, the unit sphere bundle on $\p \O$.

Next, let $n \in N_{\l,\ell}(\O_0)$. For $(\xi,v) \in \p_{+}S\Oc$, we let $\g_n(\xi,v,t) = \exp_n(\xi,tv)$ denote the unit speed geodesic (with respect to $g_n$) starting at $\xi$ with initial direction $v$ at time $t=0$. We define $\tau_n(\xi,v)$ to be the time at which $\g_n(\xi,v,\cdot)$ exits $\Oc$. It is known (see \cite{GeomIP}) that for simple manifolds, $\tau_n$ is a $C^1$ function of $\p_{+}S\Oc$, and $\tau_n(\xi,v)=0$ if and only if $v \in S_\xi \p\O$. We also define $\eta_n(\xi,v)$ and $u_n(\xi,v)$ as the point and direction at which $\g_n(\xi,v,\cdot)$ exits $\Oc$. In other words,
\begin{align*}
   \eta_n(\xi,v) &:= \g_n(\xi,v,\tau_n(\xi,v)), \quad \textrm{and} \\
   u_n(\xi,v) &:= \dot{\g}_{n}(\xi,v,\tau_n(\xi,v)). 
\end{align*}

\begin{Lemma}\label{derivative of tau}
	Let $n \in \N_{\l,\L,\ell,L}(\O_0)$. Then for all $(\xi,v)\in \p_+S\Oc$,
	\[
	\|D_v\tau_n(\xi,v)\|_{op} \leq L\frac{\tau_n(\xi,v)}{\< \nu, u\>_{\gb}} \leq \frac{L\L \diam_{\gb}(\O)}{\<\nu,u\>_{\gb}}, \]
	where $\nu = \nu_{\eta_n(\xi,v)}$ and $u=u_n(\xi,v)$.
\end{Lemma}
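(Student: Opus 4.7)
The plan is to characterize $\tau_n$ implicitly via a boundary defining function and then differentiate. First I would pick, in a collar neighborhood of $\p\O$, a smooth function $\rho$ with $\rho > 0$ on $\O$, $\rho = 0$ on $\p\O$, and $\nabla^{\gb}\rho = \nu$ along $\p\O$ (the signed $\gb$-distance to $\p\O$ works). By definition of the exit time, $\g_n(\xi,v,\tau_n(\xi,v)) \in \p\O$, so
\[
F(\xi,v) := \rho\bigl(\g_n(\xi,v,\tau_n(\xi,v))\bigr) \equiv 0 \quad \text{on } \p_+ S\Oc.
\]

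Next, I would differentiate $F \equiv 0$ in $v$, using the identity $\g_n(\xi,v,t) = \exp_n(\xi,tv)$. The chain rule gives
\[
\p_v \g_n(\xi,v,t)(w) = t\,D_v\exp_n(\xi,tv)(w), \qquad \p_t \g_n(\xi,v,\tau_n) = u,
\]
where $D_v\exp_n(\xi,tv)$ is the derivative of $\exp_n(\xi,\cdot)$ evaluated at $tv$, in the sense of Definition \ref{nclass}. Combining these with $d\rho|_{\p\O} = \<\nu,\cdot\>_{\gb}$, the vanishing of $D_v F$ translates into
\[
0 = \tau_n \<\nu,\,D_v\exp_n(\xi,\tau_n v)(w)\>_{\gb} + \<\nu,u\>_{\gb}\,D_v\tau_n(\xi,v)(w),
\]
and hence $D_v\tau_n(\xi,v)(w) = -\tau_n\<\nu,\,D_v\exp_n(\xi,\tau_n v)(w)\>_{\gb}/\<\nu,u\>_{\gb}$.

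Applying Cauchy-Schwarz together with $|\nu|_{\gb}=1$ and the upper bound $D_v\exp_n \prec L$ from \eqref{operatornorm} immediately yields
\[
\|D_v\tau_n(\xi,v)\|_{op} \leq \frac{L\,\tau_n(\xi,v)}{|\<\nu,u\>_{\gb}|},
\]
which matches the first bound in the lemma (the sign of $\<\nu,u\>_{\gb}$ is negative under the paper's orientation convention, but this is immaterial for an operator norm estimate). For the second bound, since $\g_n$ is parametrized by $g_n$-arclength, $\tau_n(\xi,v)$ is the $g_n$-length of the geodesic from $\xi$ to $\eta_n(\xi,v)$; because $g_n = n^2 \gb$ with $n < \L$, this $g_n$-length is at most $\L$ times the $\gb$-length of the same curve, which is in turn bounded by $\diam_{\gb}(\O)$.

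The main technical point is merely the setup of the boundary defining function with the correct normalization of $d\rho|_{\p\O}$; this is standard via boundary normal coordinates near $\p\O$, so once the implicit differentiation is in place, the estimate is essentially algebraic. One should also note that the derivation tacitly assumes $\<\nu,u\>_{\gb} \neq 0$; the bound becomes vacuous at glancing directions (where both sides are either zero or infinite), so this case needs no separate treatment.
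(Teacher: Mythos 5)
Your proof is correct and follows essentially the same route as the paper: implicit differentiation of $\rho(\exp_n(\xi,\tau_n(\xi,v)v))=0$ for a boundary defining function $\rho$ with $\grad\rho=\nu$ on $\p\O$, the bound $D_v\exp_n\prec L$ for the first inequality, and the comparison $\tau_n\leq\diam_{g_n}(\O)\leq\L\diam_{\gb}(\O)$ for the second. Your remarks on the sign of $\<\nu,u\>_{\gb}$ and on glancing directions are accurate but do not change the substance.
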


\begin{proof}
	Let $\rho \in C^1(\Oc)$ be such that $\rho^{-1}(0) = \p \O$ and $\rho(x) = \dist_{\gb}(x,\p \O)$ for $x$ near $\p \O$. Consider the function
	$$ f(t,v) = \rho(\exp_n(\xi,tv)). $$
	Observe that
	$$\frac{\p f}{\p t}\Big|_{t=\tau_n(\xi,v)} = \left\langle (\grad \rho)_{\eta_n(\xi,v)}, u_n(\xi,v)\right\rangle_{\gb} = \< \nu, u\>_{\gb}.$$
	On the other hand,
	\begin{align*}
		D_vf(t,v) &= D\rho_{\exp_n(\xi,tv)}\circ \left(tD_w\exp_n(\xi,w)\big|_{w=tv}\right) \\
		\Rightarrow D_vf\big|_{(\tau_n(\xi,v),v)} &= \tau_n(\xi,v)\Pi^{\nu}\circ D_w\exp_n(\xi,w)\big|_{w = \tau_n(\xi,v)v},
	\end{align*}
	where $\Pi^{\nu}$ is the linear map given by
	$$\Pi^{\nu}(w) = \langle \nu, w\rangle_{\gb} \qquad \textrm{for all } w \in T_{\eta_n(\xi,v)}\Oc.$$
	Now differentiating the identity $f(\tau_n(\xi,v),v)=0$ with respect to $v$, we get
	\begin{equation*}
		\begin{split}
			0 &= \frac{\p f}{\p t}\Big|_{(\tau_n(\xi,v),v)}D_v\tau_n(\xi,v)+D_vf\big|_{(\tau_n(\xi,v),v)} \\
			&= \<\nu,u\>_{\gb} D_v\tau_n(\xi,v)+\tau_n(\xi,v)\Pi^\nu \circ D_w\exp_n(\xi,w)\big|_{w=\tau_n(\xi,v)v}.
		\end{split}
	\end{equation*}
	Therefore,
	\begin{align*}
		D_v\tau_n(\xi,v) &= -\frac{\tau_n(\xi,v)}{\<\nu,u\>_{\gb}}\Pi^\nu \circ D_w\exp_n(\xi, w)\big|_{w=\tau_n(\xi,v)v} \\
		\Rightarrow \|D_v\tau_n(\xi,v)\|_{op} &\leq \frac{\tau_n(\xi,v)}{\<\nu,u\>_{\gb}}\left\|D_w\exp_n(\xi,w)\big|_{w=\tau_n(\xi,v)v}\right\|_{op} \\
		&\leq L\left[\frac{\tau_n(\xi,v)}{\<\nu,u\>_{\gb}}\right],
	\end{align*}
	as required. Now the lemma follows by observing that 
	$$\tau_{n}(\xi,v) \leq \diam_{g_n}(\O) \leq \L\diam_{\gb}(\O),$$
	for all $(\xi,v)\in \p_+S\Oc$.
\end{proof}

We are now ready to prove Theorem \ref{forward}. Recall that the notation $\int_\g f d|g|$ denotes the integral of a function $f$ along the curve $\g$ with respect to the arc-length metric induced by $g$.
\begin{proof}[Proof of Theorem \ref{forward}]
	Fix $\xi \in \p \O$, and define the sets
	\begin{align*}
		B_1(\xi) &:= \{\eta \in \p \O \ : \ \G_{n_1}(\xi,\eta) \leq \G_{n_2}(\xi,\eta)\},\\
		B_2(\xi) &:= \{\eta \in \p \O \ : \ \G_{n_2}(\xi,\eta) \leq \G_{n_1}(\xi,\eta)\}.
	\end{align*}Suppose $\eta \in B_1(\xi)$, and let $\g_1(\xi,\eta)$ denote the unit speed geodesic with respect to $g_{n_1}$ from $\xi$ to $\eta$. Clearly, $\G_{n_1}(\xi,\eta) = \int_{\g_1(\xi,\eta)}n_1d|\gb|$, whereas $\G_{n_2}(\xi,\eta) \leq \int_{\g_1(\xi,\eta)}n_2 d|\gb|$. So we have
	$$ (\G_{n_2}-\G_{n_1})(\xi,\eta) \leq  \int_{\g_1(\xi,\eta)}(n_2-n_1)d|\gb| = \int_{\g_1(\xi,\eta)}\frac{(n_2-n_1)}{n_1}d|g_{n_1}|. $$
	This implies
	\begin{align*}
		(\G_{n_2}-\G_{n_1})^2(\xi,\eta) &\leq  \G_{n_1}(\xi,\eta)\int_{\g_1(\xi,\eta)}\frac{(n_2-n_1)^2}{n_1^2} d|g_{n_1}| \quad \textrm{(by Cauchy-Schwarz)}\\
		&=  \G_{n_1}(\xi,\eta)\int_0^{\G_{n_1}(\xi,\eta)}\frac{(n_2-n_1)^2}{n_1^2}(\g_1(\xi,\eta,t))dt \\
		&\leq \frac{\G_{n_1}(\xi,\eta)}{\l^2}\int_0^{\G_{n_1}(\xi,\eta)}(n_2-n_1)^2(\exp_{n_1}(\xi,tv_{n_1}(\xi,\eta)))dt,
	\end{align*}
	where $v_{n_1}(\xi,\eta) = \dot{\g}_{n_1}(\xi,\eta,0)$, that is, the unit tangent vector at $\xi$ that points towards $\eta$. This implies
	\begin{align}
		\int_{B_1(\xi)}(\G_{n_2}-\G_{n_1})^2(\xi,\eta)d\eta &\leq  \frac{\Lambda \diam_{\gb}(\O)}{\l^2} \int_{\p \O}\int_0^{\G_{n_1}(\xi,\eta)}(n_2-n_1)^2(\exp_{n_1}(\xi,tv_{n_1}(\xi,\eta)))dtd\eta \nonumber\\
		&= \frac{\L \diam_{\gb}(\O)}{\l^2} \int_{\p_{+}S_\xi\Oc}\int_0^{\tau_{n_1}(\xi,v)}(n_2-n_1)^2(\exp_{n_1}(\xi,tv))|\det[D_v\eta_{n_1}(\xi,v)]dt dv. \label{changeofvar}
	\end{align}
	by the change of variables formula. (Here, $d\eta$ is the surface measure on $\eta \in \p \O$ with respect to $\gb$.) We now find an upper bound for $|\det[D_v\eta_{n_1}]|$ on the support of the integrand. Recall that by definition,
	$$ \eta_{n_1}(\xi,v) = \exp_{n_1}(\xi, \tau_{n_1}(\xi,v)v).$$
	With the canonical identification of $T_v S_\xi\Oc$ with a subspace of $T_\xi \Oc$, we get
	\begin{align*}
        D_v\eta_{n_1}(\xi,v) &= D_w\exp_{n_1}(\xi,w)\big|_{w =\tau_{n_1}(\xi,v)v}\circ D_v(\tau_{n_1}(\xi,v)v) \\
		&= D_w \exp_{n_1}(\xi,w)\big|_{w=\tau_{n_1}(\xi,v)v} \circ \big( \tau_{n_1}(\xi,v)\id +v\otimes D_v\tau_{n_1}(\xi,v)\big).
	\end{align*}
	Here, $v\otimes D_v\tau_{n_1}(\xi,v)$ should be interpreted as the map
	$$w \in T_vS_\xi \Oc \sub T_\xi\Oc \qquad \mapsto \qquad [D_v\tau_{n_1}|_{(\xi,v)}(w)]v \in T_\xi \Oc. $$
	So we have
	\begin{align*}
		\|D_v\eta_{n_1}(\xi,v)\|_{op} &\leq  \left\|D_w\exp_{n_1}(\xi,w)\big|_{w=\tau_{n_1}(\xi,v)v}\right\|_{op}\big( \tau_{n_1}(\xi,v) +\|D_v\tau_{n_1}(\xi,v)\|_{op}\big) \\
		&\leq  L\left(\L\diam_{\gb}(\O) +\frac{L\L\diam_{\gb}(\O)}{\left\langle \nu(\eta_{n_1}(\xi,v)),u_{n_1}(\xi,v)\right\rangle_{\gb}}\right)
	\end{align*}
	by Lemma \ref{derivative of tau}. Now since $\O_0$ is a relatively compact subset of $\O$, there exists an $\varepsilon \in (0,1)$ such that if $\< \nu(\eta_{n_1}(\xi,v)),u_{n_1}(\xi,v)\>_{\gb} <\ve$, the geodesic $\g_{n_1}(\xi,v,\cdot)$ lies entirely within $\Oc \setminus \O_0$, and therefore,
	$$(n_2-n_1)^2(\exp_{n_1}(\xi,tv)) = 0 \qquad \textrm{for all } t \in [0,\tau_{n_1}(\xi,v)]. $$
	Therefore, on the support of the integrand in the right hand side of \eqref{changeofvar}, we have the bounds
	$$
	\|D_v\eta_{n_1}(\xi,v)\|_{op} \leq  L\left( \L\diam_{\gb}(\O) +\frac{L\L\diam_{\gb}(\O)}{\ve}\right) \lsim_{\O,\O_0,\gb,L} \L, $$
	and consequently
	$$|\det[D_v(\eta_{n_1}(\xi,v))]| \lsim_{\O,\O_0,\gb,L} \L^{m-1}.$$
	Applying this bound to the right hand side of \eqref{changeofvar}, we get
	\begin{align*}
		\int_{B_1(\xi)}(\G_{n_1}-\G_{n_2})^2(\xi,\eta)d\eta &\lsim  \frac{\L^m}{\l^2}\int_{\p_{+}S_{\xi}\Oc}\int_0^{\tau_{n_1}(\xi,v)}(n_2-n_1)^2(\exp_{n_1}(\xi,tv))dtdv \\
		&\sim  \frac{\L^m}{\l^2}\int_{\dom(\exp_{n_1}(\xi, \cdot))} \frac{(n_2-n_1)^2(\exp_{n_1}(\xi,w))}{|w|_{\gb}^{m-1}}dw
	\end{align*}
    Again by Remark \ref{d}, we have $(n_2-n_1)^2(\exp_{n_1}(\xi,w))=0$ for all $w \in \dom(\exp_{n_1}(\xi, \cdot))$ with $|w|_{\gb}\leq \delta$. Therefore, we get
	\[
	\int_{B_1(\xi)} (\G_{n_1}-\G_{n_2})^2(\xi,\eta)d\eta \lsim \frac{\L^m }{\l^2 \delta^{m-1}}\int_{\dom(\exp_{n_1}(\xi, \cdot))}(n_2-n_1)^2(\exp_{n_1}(\xi,w)) dw. \]
	We now make the change of variable $x = \exp_{n_1}(\xi, w)$. The assumption that $D_w \exp_{n_1}(\xi,w) \succ \ell$ implies that the inverse $w_{n_1}(\xi,\cdot)$ of $\exp_{n_1}(\xi,\cdot)$ satisfies $\|D_x w_{n_1}(\xi,x)\|_{op} < \ell^{-1}$, and consequently,
	\[ |\det(D_x w_{n_1}(\xi,x))| < \ell^{-m}. \]
	Therefore,
	\begin{align*}
		\int_{B_1(\xi)} (\G_{n_1}-\G_{n_2})^2(\xi,\eta)d\eta &\lsim  \frac{\L^m}{\l^2} \int_{\O}(n_2-n_1)^2(x)|\det(D_x w_{n_1}(\xi,x))|d\vol_{\gb}(x) \\
		& \lsim \frac{\L^m}{\l^2\ell^m}\int_{\O}(n_2-n_1)^2(x)d\vol_{\gb}(x).
	\end{align*}
	By analogous arguments, we also have
	\[
	\int_{B_2(\xi)}(\G_{n_1}-\G_{n_2})^2(\xi,\eta)d\eta \lsim \frac{\L^m}{\l^2 \ell^m}\int_{\O} (n_2-n_1)^2(x)d\vol_{\gb}(x). \]
	Adding the last two inequalities, we get
	\begin{align*}
		\int_{\p \O} (\G_{n_1}-\G_{n_2})^2(\xi,\eta) d\eta &\lsim  \frac{\L^m}{\l^2 \ell^m}\|n_1-n_2\|^2_{L^2(\O)} \\
		\Rightarrow \int_{\p \O}\int_{\p \O} (\G_{n_1}-\G_{n_2})^2(\xi,\eta)d\eta d\xi &\lsim  \frac{\L^m}{\l^2 \ell^m}\|n_1-n_2\|^2_{L^2(\O)} \\
		\Rightarrow \|\G_{n_1}-\G_{n_2}\|_{L^2(\p \O \times \p \O)} &\lsim_{\O,\O_0,\gb,\ell,L}  \frac{\L^{m/2}}{\l}\|n_1-n_2\|_{L^2(\O)}.
	\end{align*}
	This completes the proof.
	
\end{proof}

Next, we derive the analogous continuity estimate for the map $c \mapsto Z_c$. The key step is to show that for any $M>0$, the operator norm of the derivative of $\exp_{n_c}(x,v)$ is uniformly bounded for all $c \in \CM$ and $(x,v) \in \dom(\exp_{n_c})$. We begin with a simple lemma.

\begin{Lemma}\label{curvature Jacobi}
	Let $(\M,g)$ be a Riemannian manifold whose curvature tensor $R$ satisfies
	\[
	\|R\| = \sup \left\{ |R(u,v)w|_g : u,v,w \in S\M\right\} < \infty. \]
	Then any Jacobi field $J$ along a unit speed geodesic $\g:[0,T]\to \M$ satisfies the norm bounds
	\[
	|J(t)|_g^2+|\dot{J}(t)|_g^2 \leq e^{(1+\|R\|)t}\left(|J(0)|_g^2+|\dot{J}(0)|_g^2\right) \qquad \textrm{for all }t \in [0,T]. \]
\end{Lemma}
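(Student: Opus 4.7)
The plan is to apply a Gr\"onwall-type argument to the energy-like function
$$f(t) := |J(t)|_g^2 + |\dot J(t)|_g^2.$$
Here and below, dots denote covariant derivatives along $\gamma$, so that $\dot J = \nabla_{\dot\gamma} J$ and $\ddot J = \nabla_{\dot\gamma}\nabla_{\dot\gamma} J$. Since $\nabla_{\dot\gamma}$ is metric-compatible, $f$ is differentiable and
$$f'(t) = 2\langle J,\dot J\rangle_g + 2\langle \dot J, \ddot J\rangle_g.$$

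Next I would invoke the Jacobi equation $\ddot J = -R(\dot\gamma, J)\dot\gamma$ to rewrite the second term as $-2\langle \dot J, R(\dot\gamma, J)\dot\gamma\rangle_g$. Because $\gamma$ has unit speed, multilinearity of $R$ together with the definition of $\|R\|$ gives the pointwise bound $|R(\dot\gamma, J)\dot\gamma|_g \leq \|R\|\,|J|_g$. Applying Cauchy--Schwarz then yields
$$|2\langle \dot J, \ddot J\rangle_g| \leq 2\|R\|\,|J|_g|\dot J|_g \leq \|R\|\bigl(|J|_g^2 + |\dot J|_g^2\bigr),$$
by AM--GM. The first term in $f'$ is handled by the same AM--GM estimate: $2|\langle J,\dot J\rangle_g| \leq |J|_g^2 + |\dot J|_g^2$.

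Combining these two estimates, we obtain the differential inequality
$$f'(t) \leq (1+\|R\|)f(t),$$
and Gr\"onwall's inequality immediately gives $f(t) \leq e^{(1+\|R\|)t}f(0)$, which is precisely the desired bound. There is no serious obstacle here; the only small care point is confirming that the supremum in the definition of $\|R\|$ controls $|R(\dot\gamma, J)\dot\gamma|_g$ after factoring out $|J|_g$ (which is immediate by multilinearity and the unit-speed assumption), and keeping track of the absolute value when applying Cauchy--Schwarz to the indefinite inner product $\langle \dot J, \ddot J\rangle_g$.
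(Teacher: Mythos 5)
Your proposal is correct and follows essentially the same argument as the paper: both differentiate the energy $f(t)=|J|_g^2+|\dot J|_g^2$, substitute the Jacobi equation, bound the resulting terms by $(1+\|R\|)f(t)$ via Cauchy--Schwarz and AM--GM, and conclude with Gr\"onwall's inequality.
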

\begin{proof}
	Set $f(t) = |J(t)|_g^2+|\dot{J}(t)|_g^2$. Since $J$ is a Jacobi field, it satisfies the equation
	\[ \ddot{J}(t)+R(J(t),\dot{\g}(t))\dot{\g}(t) = 0. \]
	Therefore,
	\begin{align*}
		f'(t) &= 2\<J(t),\dot{J}(t)\>_g+2\<\dot{J}(t),\ddot{J}(t)\>_g \\
		&= 2\<J,\dot{J}\>_g+2\<\dot{J}, -R(J,\dot{\g})\dot{\g}\>_g \\
		&\leq  2|J|_g|\dot{J}|_g +2|\dot{J}|_g\|R\||J|_g|\dot{\g}|_g^2 \\
		&\leq  (1+\|R\|)f(t).
	\end{align*}
	So it follows that
	\[ f(t) \leq e^{(1+\|\R\|)t}f(0) \qquad \textrm{for all }t \in [0,T]. \]
\end{proof}

Next, let us recall the definition of the canonical metric on the tangent bundle of a Riemannian manifold, also called the Sasaki metric. Let $(\M,g)$ be a Riemannian manifold, $(x,w)\in T\M$, and $V_1,V_2 \in T_{(x,w)}T\M$. Then we may choose curves $\alpha_j(s)=(\sigma_j(s),v_j(s))$ in $T\M$, defined on $(-\ve,\ve)$, such that
$$\alpha_j(0) = (x,w), \qquad \dot{\alpha}_j(0)= V_j, \qquad \textrm{for }j=1,2.$$
The inner product of $V_1,V_2$ with respect to the Sasaki metric is defined to be
$$\< V_1, V_2\>_{g} := \<\dot{\sigma}_1(0),\dot{\sigma}_2(0)\>_{g}+ \<\dot{v}_1(0),\dot{v}_2(0)\>_{g}, $$
where $\dot{v}_j(s)$ represents the covariant derivative of $v_j(s)$ along the curve $\sigma_j(s)$. Note that we are using the same notation for the Sasaki metric as for the original metric $g$. Now, for any $C^1$ map $F:T\M \to \M$, the operator norm of the total derivative of $F$ at $(x,w)\in T\M$ is given by
$$\|DF(x,w)\|_{op} := \sup \{ |DF(x,w)(V)|_{g} \ : \ V \in T_{(x,w)}T\M, \, |V|_{g}=1\}.$$

We will show that if $c\in \CM$, the total derivative of $\exp_{n_c}$ is bounded above in the operator norm. 
\begin{Proposition}\label{ncbounded}
	For any $M>0$, there exists $L=L(M)>0$ such that for all $c\in \CM$, the total derivative of the exponential map of $g_{n_c}$ satisfies
	$$\|D \exp_{n_c}(x,w)\|_{op} <L $$
	for all $x \in \overline\Omega$ and $w \in \dom(\exp_{n_c}(x,\cdot))$. In particular, $n_c \in \N_{\lambda,\Lambda,\ell,L}(\O_0)$.
\end{Proposition}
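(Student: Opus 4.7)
The plan is to identify $D\exp_{n_c}(x,w)(V)$ with $J(1)$ for a suitable Jacobi field $J$, and then apply a non-unit-speed variant of Lemma~\ref{curvature Jacobi} after bounding the curvature of $g_{n_c}$ uniformly in $c \in \CM$. The first step is the curvature bound: since $c \in \CM$ extends by zero to a $C^3$ function on $\Oc$ of norm less than $M$, the conformal factor $n_c = e^c$ lies in $[e^{-M}, e^M]$ with first and second derivatives bounded in terms of $M$. By the standard conformal-change formula, the Riemann tensor $R$ of $g_{n_c} = n_c^2 \gb$ is a polynomial expression in $n_c^{\pm 1}$, $\nabla c$, $\nabla^2 c$, and the curvature of $\gb$; consequently $\|R\| \leq K$ for some $K = K(M,\gb)>0$, where $\|\cdot\|$ is the supremum of Lemma~\ref{curvature Jacobi} computed with respect to $g_{n_c}$.

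Next, fix $(x,w)\in T\Oc$ with $w\in \dom(\exp_{n_c}(x,\cdot))$ and let $V \in T_{(x,w)}T\Oc$ have unit Sasaki norm. Represent $V$ by a curve $\alpha(s) = (\sigma(s),v(s))$ with $\alpha(0)=(x,w)$ and $\dot\alpha(0)=V$, and form the geodesic variation $\gamma_s(t) = \exp_{n_c}(\sigma(s),tv(s))$ for $t\in[0,1]$. The variation field $J(t) = \partial_s|_{s=0}\gamma_s(t)$ is a Jacobi field along $\gamma(t)=\exp_{n_c}(x,tw)$ with $J(0) = \dot\sigma(0)$ and $\dot J(0) = (D/ds)v(s)|_{s=0}$, so $|J(0)|^2 + |\dot J(0)|^2 = |V|_{g_{n_c}}^2 = 1$, and $J(1) = D\exp_{n_c}(x,w)(V)$. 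Since $\gamma$ stays inside $\Oc$ by assumption, its speed is controlled: $|w|_{g_{n_c}} \leq \diam_{g_{n_c}}(\Oc) \leq e^M\diam_{\gb}(\Oc)$.

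Repeating the differential-inequality argument of Lemma~\ref{curvature Jacobi} directly for the (generally non-unit-speed) geodesic $\gamma$ gives $f'(t) \leq (1+K|w|_{g_{n_c}}^2)f(t)$ for $f = |J|^2+|\dot J|^2$, and Gronwall then yields $|J(1)|_{g_{n_c}}^2 \leq \exp\bigl(1+Ke^{2M}\diam_{\gb}(\Oc)^2\bigr) =: L^2$, which is the desired bound on $\|D\exp_{n_c}(x,w)\|_{op}$. The remaining conditions of Definition~\ref{nclass} follow at once: simplicity and the lower bound $\ell \prec D_v\exp_{n_c}$ are built into $\CM$, while $e^{-M} < n_c < e^M$ and $n_c \equiv 1$ on $\Oc\setminus\Omega_0$ come from the definition of $n_c$; an additional factor of $e^{\pm M}$ (absorbed into $L$) converts the $g_{n_c}$-operator bound into the $\gb$-norm bound of Definition~\ref{nclass}(iii).

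The main obstacle is the uniform curvature estimate: one must verify carefully that the conformal-change formula yields an $\|R\|$ controlled only by $\|c\|_{C^2}$ (hence by $M$) and by the fixed geometry of $\gb$. A subtler pitfall is to avoid reparametrizing $\gamma$ to unit speed, which would rescale $\dot J(0)$ by $|w|^{-1}$ and introduce a spurious $|w|^{-2}$ in the initial data; this blows up as $|w|\to 0$, even though the true derivative $D\exp_{n_c}(x,0)$ is the bounded map $(V_h,V_v)\mapsto V_h+V_v$. Working with the non-unit-speed form of Gronwall sidesteps this artifact entirely.
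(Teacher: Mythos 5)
Your proof is correct and follows essentially the same route as the paper's: bound the curvature of $g_{n_c}$ uniformly in terms of $M$ and $\gb$, realize $D\exp_{n_c}(x,w)(V)$ as $J(1)$ for the Jacobi field of the variation $\exp_{n_c}(\sigma(s),tv(s))$, and apply the Gronwall estimate of Lemma~\ref{curvature Jacobi}. In fact you are slightly more careful than the paper on one point: the paper invokes Lemma~\ref{curvature Jacobi} (stated for unit-speed geodesics) directly on $t\mapsto\exp_{n_c}(x,tw)$, whereas you correctly rerun the differential inequality for the non-unit-speed geodesic and control the resulting $|w|_{g_{n_c}}^2$ factor by $e^{2M}\diam_{\gb}(\Oc)^2$.
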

\begin{proof}
	Suppose $c \in \CM$. Fix $(x,w)\in \dom(\exp_{n_c})$, and let $V \in  T_{(x,w)}T\Oc$. It suffices to show that
	\[ |D\exp_{n_c}(x,w)(V)|_{\gb} < L|V|_{\gb}. \] 
	Choose a curve $\alpha(s) = (\sigma(s),v(s))$ in $T\Oc$, defined on $(-\ve,\ve)$, such that $\alpha(0)=(x,w)$ and $\dot{\alpha}(0)=V$. Consider the family of geodesics $\Phi:(-\ve,\ve)\times[0,1] \to \Oc$ defined by 
	\[ \Phi(s,t) = \exp_{n_c}(\sigma(s),tv(s)). \]
	The variation field of this family of geodesics is
	\[ J(t) := \p_s\exp_{n_c}(\sigma(s),tv(s))\big|_{s=0}, \]
	which is a Jacobi field along $\g(t) := \Phi(0,t)$. Observe that
	\[ J(1) = \p_s \exp_{n_c}(\sigma(s), v(s))\big|_{s=0} = D\exp_{n_c}(x,w)(V), \]
	which is precisely the quantity whose norm we want to estimate. 
	
	Let $R$ be the Riemann curvature tensor of $(\Oc,g_{n_c})$, and let $R^i_{jkl}$ denote its tensor coefficients with respect to a fixed global coordinate chart on $\Oc$. Then we have
	\[ R^i_{jkl} = \p_k\G^i_{lj}-\p_l\G^i_{kj}+\G^i_{km}\G^m_{lj}-\G^i_{lm}\G^m_{kj}, \]
	where
	\[ \G^l_{jk} = \frac{1}{2}n_c^{-2}\gb^{lm}\left(\p_j(n_c^2\gb_{km})+\p_k(n_c^2\gb_{jm})-\p_m(n_c^2\gb_{jk})\right). \]
	This implies that for any $x\in \Oc$,
	\[
	\max_{ijkl}|R^i_{jkl}(x)| \lsim_{\gb} 1+ n_c(x)^{-2}\|n_c\|^2_{C^2} \lsim e^{4M}(1+M)^4. \]
	Therefore, for any $x\in \Oc$ and unit tangent vectors $u,v,w \in S_x\O$, 
	\begin{align*}
		|R(u,v)w|_{g_c} &\lsim  n_c(x)\left(\max_{ijkl}|R^i_{jkl}(x)u^jv^kw^l|\right) \lsim e^{5M}(1+M)^4 \\
		\Rightarrow \|R\| &\leq  Ce^{5M}(1+M)^4
	\end{align*}
	for some $C>0$. Taking $L^2 > \exp(1+C'e^{5M}(1+M)^4)$ and applying Lemma \ref{curvature Jacobi}, we get
	\begin{align*}
		|D\exp_c(x,w)(V)|_{g_c}^2 = |J(1)|_{g_{n_c}}^2 & <  L^2\left(|J(0)|_{g_{n_c}}^2+|\dot{J}(0)|_{g_{n_c}}^2\right)\\
		&= L^2\left( |\dot{\sigma}(0)|^2 +\left|\dot{v}(0)\right|^2\right) = L^2|V|_{\gb}^2.
	\end{align*}
	This completes the proof. 
\end{proof}

\begin{Corollary}\label{forward-c}
	There exists a constant $C_2' = C_2'(\O,\O_0,\gb,\ell,M)>0$ such that for all $c_1,c_2 \in \CM$,
	\[
	\|Z_{c_1}-Z_{c_2}\|_{L^2(\p \O \times \p \O)} \leq C_2'\|c_1-c_2\|_{L^2(\O_0)}. \]
\end{Corollary}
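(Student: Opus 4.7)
The plan is to chain together Theorem \ref{forward}, the comparison \eqref{link-L2} between $\|n_{c_1}-n_{c_2}\|_{L^2}$ and $\|c_1-c_2\|_{L^2}$, and a Lipschitz estimate for the logarithm restricted to the support of $Z_{c_1}-Z_{c_2}$. This mirrors, in reverse, the strategy used for Corollary \ref{inverse-c}.

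First, for any $c_1,c_2\in \CM$, Proposition \ref{ncbounded} ensures that $n_{c_1},n_{c_2} \in \N_{\l,\L,\ell,L}(\O_0)$ with $\l = e^{-M}$, $\L = e^M$, and $L = L(M)$ depending only on $M$. Theorem \ref{forward} then gives
\[
\|\G_{c_1}-\G_{c_2}\|_{L^2(\p\O\times\p\O)} \leq C_2 \frac{\L^{m/2}}{\l}\|n_{c_1}-n_{c_2}\|_{L^2(\O)},
\]
and combining with the upper bound of \eqref{link-L2} yields
\[
\|\G_{c_1}-\G_{c_2}\|_{L^2(\p\O\times\p\O)} \lsim_{\O,\O_0,\gb,\ell,M} \|c_1-c_2\|_{L^2(\O_0)}.
\]

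The next step is to upgrade this $\G$-estimate to the analogous estimate on $Z_c = \log \G_c$. By Remark \ref{d}, if $\dist_{\gb}(\xi,\eta) < \delta$ then $\G_{c_1}(\xi,\eta) = \G_{c_2}(\xi,\eta)$, and hence $Z_{c_1}-Z_{c_2}$ is supported in the set where $\dist_{\gb}(\xi,\eta) \geq \delta$. On that set, since $n_{c_j} \geq e^{-M}$, one has $\G_{c_j}(\xi,\eta) \geq e^{-M}\dist_{\gb}(\xi,\eta) \geq e^{-M}\delta$ for $j=1,2$. Because $\log$ is Lipschitz with constant $e^M/\delta$ on $[e^{-M}\delta,\infty)$, I would apply the mean value theorem to obtain
\[
|Z_{c_1}(\xi,\eta) - Z_{c_2}(\xi,\eta)| \leq \frac{e^M}{\delta}|\G_{c_1}(\xi,\eta) - \G_{c_2}(\xi,\eta)|
\]
pointwise on this support. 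Taking $L^2$ norms and combining with the previous display finishes the argument.

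I do not anticipate any serious obstacle here: Theorem \ref{forward} and Proposition \ref{ncbounded} have already done the heavy lifting, and the remaining step—the pointwise Lipschitz comparison between $\G_{c_j}$ and $Z_{c_j}$—reduces to extracting the lower bound $\G_{c_j}\geq e^{-M}\delta$ on the support of $Z_{c_1}-Z_{c_2}$, which is exactly what Remark \ref{d} provides.
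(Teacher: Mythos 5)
Your proposal is correct and follows essentially the same route as the paper: first chain Theorem \ref{forward}, Proposition \ref{ncbounded}, and \eqref{link-L2} to bound $\|\G_{c_1}-\G_{c_2}\|_{L^2}$ by $\|c_1-c_2\|_{L^2}$, then use Remark \ref{d} and the lower bound $\G_{c_j}\geq e^{-M}\delta$ on the support of $Z_{c_1}-Z_{c_2}$ to pass from $\G$ to $Z=\log\G$. The only cosmetic difference is that you phrase the last step as a Lipschitz bound for $\log$ on $[e^{-M}\delta,\infty)$, whereas the paper states the equivalent lower Lipschitz bound \eqref{lipexp} for $\exp$ on the corresponding interval of $Z$-values.
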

\begin{proof}
	We know from Theorem \ref{forward}, Proposition \ref{ncbounded}, and equation \eqref{link-L2} that
	\[
	\|\G_{c_1}-\G_{c_2}\|_{L^2(\p \Omega \times \p \Omega)} \lsim_{\O,\O_0,\gb,\ell,M}\|c_1-c_2\|_{L^2(\O_0)}. \]
	Now consider
	\[
	\|\G_{c_1}-\G_{c_2}\|_{L^2(\p \Omega \times \p \Omega)}^2 = \int_{\p \Omega \times \p \Omega}\left|e^{Z_{c_1}}-e^{Z_{c_2}}\right|^2\ d\xi d\eta. \]
	Recall that there exists $\delta>0$ such that $Z_{c_1}(\xi,\eta)=Z_{c_2}(\xi,\eta)$ whenever $\dist_{\gb}(\xi,\eta)<\delta$. On the set $\{\dist_{\gb}(\xi,\eta)\geq \delta\}$,
	\begin{align}
		e^{-M}\delta &\leq  \G_{c_j}(\xi,\eta) \leq e^M\diam_{\gb}(\O) \nonumber \\
		\Rightarrow -M+\log\delta &\leq  Z_{c_j}(\xi,\eta) \leq M+ \log|\diam_{\gb}(\O)|. \label{zcbound}
	\end{align}
   So by \eqref{lipexp},
    $$|e^{Z_{c_1}(\xi,\eta)}-e^{Z_{c_2}(\xi,\eta)}| \geq e^{-M}\delta|Z_{c_1}(\xi,\eta)- Z_{c_2}(\xi,\eta)|$$
	for all $(\xi,\eta) \in \p\O \times \p\O$. Consequently,
	\[
	\|\G_{c_1}-\G_{c_2}\|^2_{L^2(\p \Omega \times \p \Omega)} = \int \left|e^{Z_{c_1}}-e^{Z_{c_2}}\right|^2\ d\xi d\eta \geq e^{-2M}\delta^2\int |Z_{c_1}-Z_{c_2}|^2\ d\xi d\eta. \]
   So we conclude that
	\[
	\|Z_{c_1}-Z_{c_2}\|_{L^2} \lsim \|\G_{c_1}-\G_{c_2}\|_{L^2} \lsim \|c_1-c_2\|_{L^2}. \]
\end{proof}

We conclude this section with a technical result that will be necessary for the proof of Theorem \ref{postcontr} in Section 3.

\begin{Theorem}\label{h2bounds}
	Given $M>0$, there exists a constant $C_3' = C_3'(\O,\O_0, \gb,\ell,M)>0$ such that for all $c_1,c_2 \in \CM$,
	\[
	\|Z_{c_1}-Z_{c_2}\|_{H^2(\p \O \times \p \O)} \leq C_3'. \]
\end{Theorem}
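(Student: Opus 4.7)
The plan is to establish a uniform pointwise $C^2$ bound on $f := Z_{c_1}-Z_{c_2}$ over the compact set $\p\O\times\p\O$; integration then yields the desired $H^2$ estimate. By Remark \ref{d}, $\G_{c_1}(\xi,\eta) = \G_{c_2}(\xi,\eta) = \dist_{\gb}(\xi,\eta)$ whenever $\dist_{\gb}(\xi,\eta)<\delta$, so $f$ vanishes identically on the open neighborhood $U := \{(\xi,\eta)\in \p\O\times \p\O : \dist_{\gb}(\xi,\eta)<\delta\}$ of the diagonal. Each $Z_{c_i}$ is smooth on $\p\O\times\p\O \setminus\{\xi=\eta\}$, because $\G_{c_i}$ is smooth and strictly positive there (the metric $g_{n_{c_i}}$ being simple and $C^3$), so $f$ is in fact $C^\infty$ on all of $\p\O\times\p\O$, and it suffices to bound $Z_{c_i}$ uniformly in $C^2$ over the complement $V := (\p\O\times\p\O)\setminus U$.

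On $V$ the lower bound $\G_{c_i}(\xi,\eta)\geq e^{-M}\dist_{\gb}(\xi,\eta) \geq e^{-M}\delta$ and the upper bound $\G_{c_i}\leq e^M\diam_{\gb}(\O)$ from \eqref{zcbound} give uniform control on $|Z_{c_i}|$, and $|dZ_{c_i}|_{\gb} = |d\G_{c_i}|_{\gb}/\G_{c_i}\leq e^M/\delta$ by Lemma \ref{derivative estimates}. Writing $\nabla^2 Z_{c_i} = \G_{c_i}^{-1}\nabla^2\G_{c_i} - \G_{c_i}^{-2}\,d\G_{c_i}\otimes d\G_{c_i}$, the $C^2$ bound reduces to controlling $|\nabla^2 \G_{c_i}|_{\gb}$ uniformly on $V$. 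The mixed block $\nabla^\xi\nabla^\eta \G_{c_i}$ is handled directly by Lemma \ref{derivative estimates} combined with $\dist_{\gb}\geq \delta$ on $V$.

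The main obstacle is bounding the pure blocks $\nabla^\xi\nabla^\xi \G_{c_i}$ and $\nabla^\eta\nabla^\eta \G_{c_i}$; by symmetry it suffices to treat the first. Starting from the identity $\G_{c_i}(\xi,\eta) = |w_{c_i}(\eta,\xi)|_{g_{n_{c_i}}}$, with $w_{c_i}(\eta,\cdot) := \exp_{n_{c_i}}(\eta,\cdot)^{-1}$, and differentiating twice in $\xi$ in boundary normal coordinates (analogously to the calculation carried out in the proof of Lemma \ref{derivative estimates} for the mixed block), one obtains a pointwise bound involving $D_\xi w_{c_i}$ and $D^2_\xi w_{c_i}$. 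The first-order factor is uniformly bounded by $\ell^{-1}$ via \eqref{dwnorm}. For the second-order factor, the hypothesis $\|c_i\|_{C^3}\leq M$ yields a uniform $C^3$ bound on the metric $g_{n_{c_i}}=e^{2c_i}\gb$; standard smooth dependence of ODE solutions on parameters then gives a uniform $C^2$ bound on $\exp_{n_{c_i}}$, and the inverse function theorem combined with the two-sided bound $\ell \prec D_v\exp_{n_{c_i}} \prec L$ (coming from Definition \ref{CMb} and Proposition \ref{ncbounded}) delivers a uniform $C^2$ bound on $w_{c_i}$ in terms of $\O,\O_0,\gb,\ell,M$. Substituting these back, together with the lower bound $\G_{c_i}\geq e^{-M}\delta$ on $V$, produces the required uniform pointwise bound on $|\nabla^\xi\nabla^\xi \G_{c_i}|_{\gb}$ over $V$. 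Combining all of the above gives a uniform $C^2$ bound on $f$ throughout $\p\O\times\p\O$, and integration over this compact set completes the proof.
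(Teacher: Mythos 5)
Your overall strategy coincides with the paper's: both proofs exploit the vanishing of $Z_{c_1}-Z_{c_2}$ on the neighborhood $\{\dist_{\gb}(\xi,\eta)<\delta\}$ of the diagonal, bound the function and its first derivatives via \eqref{zcbound} and Lemma \ref{derivative estimates}, decompose the Hessian into the blocks $\hess_\xi$, $\nabla^\xi\nabla^\eta$, $\nabla^\eta\nabla^\xi$, $\hess_\eta$, and handle the mixed blocks exactly as you do. The one genuine divergence is the pure block $\hess_\xi\G_{c_j}$. The paper invokes the identity
\[
\hess_\xi \G_{c_j}(\xi,\eta) = \bigl(D_w\exp_{c_j}(\xi,w(\xi,\eta))\bigr)^{-1}\bigl(D_\xi \exp_{c_j}(\xi, w(\xi,\eta))\bigr)
\]
from \cite{V11}, which expresses the second derivative of the distance function purely in terms of \emph{first} derivatives of the exponential map; the lower bound $\ell$ and the upper bound $L(M)$ from Proposition \ref{ncbounded} then give $|\hess_\xi\G_{c_j}|_{\gb}\lsim \ell^{-1}L(M)$ with no further work. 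You instead differentiate $\G_{c_j}(\xi,\eta)=|w_{c_j}(\eta,\xi)|_{g_{n_{c_j}}}$ twice, which forces you to control $D^2_\xi w_{c_j}$ and hence to establish a uniform $C^2$ bound on $\exp_{n_{c_j}}$ over the whole class $\CM$. That bound is obtainable — the $C^3$ bound on $c$ gives uniform $C^1$ control of the derivatives of the Christoffel symbols, and a Gronwall argument on the variational equations over the uniformly bounded time interval $\tau\leq \L\diam_{\gb}(\O)$ delivers it, after which the inverse function theorem with $\ell\prec D\exp\prec L$ controls $D^2 w$ — but this is an extra quantitative ODE estimate that you assert rather than prove, and it is precisely the work the \cite{V11} identity lets the paper avoid. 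Two minor imprecisions: $\G_{c_j}$ for a $C^3$ metric is not $C^\infty$ off the diagonal (only roughly $C^2$, which is all you need), and your Hessian computation must be carried out intrinsically on $\p\O\times\p\O$ (projections onto the boundary tangent spaces, as in Lemma \ref{derivative estimates}), but neither affects the validity of the argument.
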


\begin{proof}
	We know from Theorem \ref{forward} that 
	\[ \|Z_{c_1}-Z_{c_2}\|_{L^2} \lsim \|c_1-c_2\|_{L^2} \lsim 2M. \]

    Next, let $\xi,\eta \in \p \O$. It follows from Remark \ref{d} that if $\dist_{\gb}(\xi,\eta)<\delta$, then $Z_{c_1}-Z_{c_2}$ and all its derivatives are identically $0$ in a neighborhood of $(\xi,\eta)$. On the other hand, if $\dist_{\gb}(\xi,\eta)>\delta$, Lemma \ref{derivative estimates} implies
	\[
	|d_\xi(Z_{c_1}-Z_{c_2})(\xi,\eta)|_{\gb} \leq \frac{|d_\xi \G_{c_1}(\xi,\eta)|_{\gb}}{\G_{c_1}(\xi,\eta)} +\frac{|d_\xi \G_{c_2}(\xi,\eta)|_{\gb}}{\G_{c_2}(\xi,\eta)} \lsim \frac{e^M}{\delta}. \]
	This shows that $\|d_\xi(Z_{c_1}-Z_{c_2})\|_{L^2}$ is uniformly bounded for $c_1,c_2 \in \CM$. By symmetry, $\|d_\eta(Z_{c_1}-Z_{c_2})\|_{L^2}$ is also uniformly bounded.
 
      So it only remains to consider the Hessian tensor of $Z_{c_1}-Z_{c_2}$. Let $\nabla$ denote the Levi-Civita connection on $\p \O_{\xi}\times \p \O_{\eta}$, and let $\pi^\xi:\p \O_{\xi}\times \p \O_{\eta} \to \p \O_{\xi}$ and $\pi^\eta:\p \O_{\xi}\times \p \O_{\eta} \to \p \O_{\eta}$ denote the canonical projection maps. We may decompose $\nabla$ as $\nabla^\xi +\nabla^{\eta}$, where $\nabla^{\xi}$ and $\nabla^{\eta}$ are the covariant derivative operations with respect to $\xi$ and $\eta$ respectively. More precisely, given any tensor field $F$ on $\p \O_\xi \times \p \O_\eta$, and any tangent vector $v\in T(\p \O_\xi \times \p \O_\eta)$, we have
      $$\nabla^{\xi}_vF = \nabla_{(\pi^\xi)_*v_\xi}F, \qquad \nabla^{\eta}_vF = \nabla_{(\pi^\eta)_*v_\eta}F,$$
      where $(v_\xi,v_\eta)$ is the image of $v$ under the canonical isomorphism from $T(\p\O_\xi\times\p\O_\eta)$ to $(T\p\O_\xi)\times (T\p\O_\eta)$. Correspondingly, the Hessian operator on $\p \O_\xi \times \p \O_\eta$ can be decomposed as
      \begin{align*}
          \hess = \nabla^2 &= (\nabla^\xi+\nabla^\eta)(\nabla^\xi+\nabla^\eta) \\
          &= \nabla^\xi \nabla^\xi +\nabla^\xi\nabla^\eta + \nabla^\eta\nabla^\xi +\nabla^\eta\nabla^\eta \\
          &= \hess_\xi +\nabla^\xi\nabla^\eta+ \nabla^\eta\nabla^\xi +\hess_\eta,
      \end{align*}
      where $\hess_\xi$ and $\hess_\eta$ are the Hessian operators with respect to $\xi$ and $\eta$ respectively. Now let $\xi,\eta \in \p \O$ be such that $\dist_{\gb}(\xi,\eta)>\delta$. Then for $j=1,2$,
      \begin{align*}
          \nabla^\xi\nabla^\eta Z_{c_j}(\xi,\eta) &= \nabla^\xi\nabla^\eta \log\G_{c_j}(\xi,\eta) \\
          &= \left(\frac{\nabla^\xi\nabla^\eta\G_{c_j}}{\G_{c_j}} - \frac{d_\xi\G_{c_j} \otimes d_\eta \G_{c_j}}{\G_{c_j}^2}\right)(\xi,\eta).
      \end{align*}
      By Lemma \ref{derivative estimates}, this implies
      \begin{align*}
          |\nabla^\xi\nabla^\eta Z_{c_j}(\xi,\eta)|_{\gb} &\leq \frac{|\nabla^\xi\nabla^\eta\G_{c_j}(\xi,\eta)|_{\gb}}{\G_{c_j}(\xi,\eta)} + \frac{|d_\xi \G_{c_j}(\xi,\eta)|_{\gb}|d_\eta \G_{c_j}(\xi,\eta)|_{\gb}}{\G_{c_j}^2(\xi,\eta)} \\
          &\lsim \frac{1+\ell^{-1}}{\l\delta^2} + \frac{1}{\delta^2}.
      \end{align*}
    This implies that $\|\nabla^\xi\nabla^\eta (Z_{c_1}-Z_{c_2})\|_{L^2}$ is uniformly bounded as well. Finally, consider the fact \cite{V11} that
	\[
	\hess_\xi \G_{c_j}(\xi,\eta) = (D_w\exp_{c_j}(\xi,w(\xi,\eta)))^{-1}(D_\xi \exp_{c_j}(\xi, w(\xi,\eta))), \]
	where $w(\xi,\cdot)$ is the inverse of $\exp_{c_j}(\xi,\cdot)$ as in Lemma \ref{derivative estimates}. Therefore, by Proposition \ref{ncbounded},
    $$|\hess_\xi \G_{c_j}(\xi,\eta)|_{\gb} \lsim \ell^{-1}L(M).$$
    Writing $Z_{c_j}=\log \G_{c_j}$, we get
    \begin{align*}
        \hess_\xi Z_{c_j}(\xi,\eta) &= \hess_{\xi} \log \G_{c_j}(\xi,\eta) \\
        &= \left(\frac{\hess_\xi \G_{c_j}}{\G_{c_j}} - \frac{d_\xi \G_{c_j}\otimes d_\xi \G_{c_j}}{\G_{c_j}^2}\right)(\xi,\eta),
    \end{align*}
    which implies
    \begin{align*}
        |\hess_\xi Z_{c_j}(\xi,\eta)|_{\gb} &\leq \frac{|\hess_\xi \G_{c_j}(\xi,\eta)|_{\gb}}{\G_{c_j}(\xi,\eta)} + \frac{|d_\xi\G_{c_j}(\xi,\eta)|_{\gb}^2}{\G_{c_j}^2(\xi,eta)} \\
        &\lsim \frac{\ell^{-1}L}{\lambda \delta^2} +\frac{1}{\delta^2}.
    \end{align*}
	So we conclude that $\|\hess_\xi(Z_{c_1}-Z_{c_2})\|_{L^2}$, and by similar arguments, $\|\hess_\eta(Z_{c_1}-Z_{c_2})\|_{L^2}$, are both uniformly bounded on $\CMb$ as well. This proves the result.
\end{proof}

\section{Statistical Inversion through the Bayesian framework}\label{stats}

As discussed in the Introduction, we will be using the posterior mean of $c$ given finitely many measurements $\mathcal{D}_N = (X_i,Y_i,Z_i)_{i=1}^N$, as an estimator for the true metric parameter $c_0$. Let us begin by describing the prior distribution $\Pi$ for $c \in C^3_0(\O_0)$. We will assume that $\Pi$ arises from a centered Gaussian probability distribution $\widetilde{\Pi}$ on the Banach space $C(\overline{\O}_0)$ that satisfies the following conditions.

\begin{cond}
    Let $\beta\geq 3$ and $\alpha > \beta +\frac{m}{2}$. We assume that $\widetilde{\Pi}$ is a centered Gaussian Borel probability measure on $C(\overline{\O}_0)$ that is supported in a separable subspace of $C^\beta_0(\O_0)$. Moreover, its {\it Reproducing Kernel Hilbert space (RKHS)} $(\mathcal{H},\|\cdot\|_{\mathcal{H}})$ must be continuously embedded in the Sobolev space $H^\alpha(\O_0)$. 
\end{cond}

 We refer the reader to \cite[Chapter 11]{GvdV17} or \cite[Sections 2.1 and 2.6]{GN15} for basic facts about Gaussian probability measures and their Reproducing Kernel Hilbert Spaces.

We now define the prior $\Pi$ to be the restriction of $\widetilde{\Pi}$ to $\CMb$ in the sense that
\begin{equation}\label{Pi}
   \Pi(A) = \frac{\widetilde{\Pi}\left(A\cap \CMb\right)}{\widetilde{\Pi}(\CMb)} 
\end{equation}
for all Borel sets $A \sub C^3_0(\O_0)$. We will see in Lemma \ref{smallball} that $C^\beta$-balls have positive $\widetilde{\Pi}$-measure. This together with the fact that $\CMb$ is an open subset of $C_0^\beta(\O_0)$ (c.f.\ Remark \ref{open}) implies that $\widetilde{\Pi}(\CMb) >0$. Therefore, \eqref{Pi} yields a well-defined probability distribution on $C^3_0(\O_0)$.

\begin{Theorem}\label{main}
	Let $\Pi$ be a prior distribution on $C^3_0(\O_0)$ defined by \eqref{Pi}. Assume that the true parameter $c_0 \in \CMb \cap \H$, and let $\overline{c}_N$ be the mean \eqref{postmean} of the posterior distribution $\Pi(\cdot |\D_N)$ arising from observations \eqref{obs}. Then there exists $\omega\in(0,1/4)$ such that
	\[
	P^N_{c_0}\left( \|\overline{c}_N-c_0\|_{L^2(\O_0)} > N^{-\omega}\right) \to 0 \qquad \textrm{as } N \to \infty. \]
	Moreover, $\omega$ can be made arbitrarily close to $1/4$ for $\alpha$, $\beta$ large enough.
\end{Theorem}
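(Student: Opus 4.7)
The strategy is the Bayesian inversion framework developed in \cite{MNP19, MNP21a, MNP21b} for related non-linear inverse problems. I would proceed in three stages: first, establish a posterior contraction rate in the \emph{forward} metric $\|Z_c - Z_{c_0}\|_{L^2(\p\O\times \p\O)}$; second, translate this into a contraction rate in the \emph{inverse} metric $\|c - c_0\|_{L^2(\O_0)}$ via the stability estimates of Section 2; third, pass from posterior contraction to convergence of the posterior mean $\overline{c}_N$.

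For stage one, I would apply the general Ghosal--van der Vaart posterior contraction theorem for i.i.d.\ non-parametric regression (cf.\ \cite[Theorem 8.9]{GvdV17}) to the induced forward parameter $Z_c$ in the Gaussian regression model. The three hypotheses to verify are: (i) a prior-mass lower bound on Kullback--Leibler neighborhoods of $c_0$; (ii) a sieve $\mathcal{C}_N \subset \CMb$ with exponentially small $\widetilde\Pi$-complement and bounded $L^2$-metric entropy; (iii) suitable exponential tests. Since the likelihood is Gaussian, (i) reduces, via Corollary \ref{forward-c}, to a small-ball estimate for $\Pi$ around $c_0$ in $L^2(\O_0)$; because $c_0 \in \H \cap \CMb$ and (by Remark \ref{open}) $\CMb$ contains a $C^\beta$-neighborhood of $c_0$, the standard Gaussian concentration-function inequality \cite[Theorem 11.20]{GvdV17} applied to $\widetilde\Pi$ yields the required lower bound, absorbing the finite constant $\widetilde\Pi(\CMb)^{-1}$. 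For (ii), take $\mathcal{C}_N = \{c \in \CMb : \|c\|_{H^\alpha} \leq M_N\}$ for an appropriate $M_N$: Borell--Sudakov--Tsirelson controls $\widetilde\Pi(\mathcal{C}_N^c)$, while the compact Sobolev embedding $H^\alpha \hookrightarrow C^\beta$ together with Corollary \ref{forward-c} controls the $L^2(\p\O\times\p\O)$-covering numbers of $\{Z_c : c \in \mathcal{C}_N\}$. Item (iii) follows from standard Hellinger/likelihood-ratio tests for Gaussian regression with bounded signal. The outcome is posterior contraction at a rate $\varepsilon_N$ in $\|Z_c - Z_{c_0}\|_{L^2(\p\O\times\p\O)}$ that approaches $N^{-1/2}$ as $\alpha,\beta \to \infty$.

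For stage two, the inverse stability estimate of Corollary \ref{inverse-c} gives $\|c - c_0\|_{L^2(\O_0)} \lesssim \|Z_c - Z_{c_0}\|_{H^1(\p\O\times\p\O)}$, but stage one only controls the $L^2$ norm of $Z_c - Z_{c_0}$. The missing Sobolev order is recovered through interpolation: Theorem \ref{h2bounds} furnishes a uniform $H^2$ bound on $Z_c - Z_{c_0}$ over $c \in \CMb$, and the standard inequality $\|f\|_{H^1} \lesssim \|f\|_{L^2}^{1/2}\|f\|_{H^2}^{1/2}$ applied to $f = Z_c - Z_{c_0}$ produces
\[
\|c - c_0\|_{L^2(\O_0)} \lesssim \|Z_c - Z_{c_0}\|_{L^2(\p\O\times\p\O)}^{1/2}.
\]
Hence posterior contraction in the inverse metric occurs at the rate $\sqrt{\varepsilon_N}$, which approaches $N^{-1/4}$ for $\alpha$ (and hence $\beta$) sufficiently large, matching the claim on $\omega$.

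Stage three converts posterior contraction to convergence of the posterior mean by Jensen's inequality:
\[
\|\overline{c}_N - c_0\|_{L^2(\O_0)} \leq \int \|c - c_0\|_{L^2(\O_0)}\, d\Pi(c \mid \D_N).
\]
Splitting this integral according to whether $\|c - c_0\|_{L^2}$ exceeds $N^{-\omega}$, the good piece is at most $N^{-\omega}$, while the bad piece is bounded by the uniform $L^2$-diameter of $\CMb$ (which is $\lesssim M$) times the posterior mass of the complement, a quantity that vanishes in $P^N_{c_0}$-probability by stage two. This yields the desired convergence $\|\overline{c}_N - c_0\|_{L^2(\O_0)} \to 0$ at rate $N^{-\omega}$. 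The main obstacle is stage one, specifically transferring the small-ball estimate from the unrestricted Gaussian $\widetilde\Pi$ to the restricted prior $\Pi$: one must exploit the openness of $\CMb$ at $c_0$ (from Remark \ref{open}) to ensure that a $C^\beta$-ball of sufficiently small but fixed radius around $c_0$ is contained in $\CMb$, so that the concentration-function argument for $\widetilde\Pi$ carries over with only a bounded multiplicative loss, and then match the resulting $C^\beta$-type small balls with the $L^2$-type KL neighborhoods generated by the Gaussian-regression likelihood through Sobolev embedding and Corollary \ref{forward-c}.
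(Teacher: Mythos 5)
Your architecture matches the paper's: a general posterior contraction theorem with a prior-mass condition and an entropy condition, verification of the prior-mass condition via Gaussian small-ball estimates for $\widetilde{\Pi}$ combined with the openness of $\CMb$ and the forward estimate of Corollary \ref{forward-c}, transfer from the forward metric to $\|c-c_0\|_{L^2}$ via Corollary \ref{inverse-c} and the interpolation $\|f\|_{H^1}\lesssim\|f\|_{L^2}^{1/2}\|f\|_{H^2}^{1/2}$ with the uniform $H^2$ bound of Theorem \ref{h2bounds}, and finally Jensen's inequality for the posterior mean. Two points of divergence are worth recording. First, your sieve $\mathcal{C}_N=\{c\in\CMb:\|c\|_{H^\alpha}\le M_N\}$ is both unnecessary and, as literally specified, broken: the entropy condition can be verified on all of $\CMb$ at once because Definition \ref{CMb}(iii) already bounds $\|c\|_{C^\beta}$ by $M$, so the covering numbers follow directly from the embedding $C^\beta_*\hookrightarrow L^2$ (this is the paper's Lemma \ref{complexity}); and Borell--Sudakov--Tsirelson cannot make $\widetilde{\Pi}(\mathcal{C}_N^c)$ small for your choice of sieve, since a nondegenerate Gaussian measure whose RKHS embeds into $H^\alpha$ typically assigns mass zero to $H^\alpha$ itself (sample paths are strictly rougher than RKHS elements), so $\widetilde{\Pi}(\mathcal{C}_N)=0$. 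The standard fix is the Borell-type sieve $M_N U_{\H}+\varepsilon_N U_{C^\beta}$, but here you can simply drop the sieve. Second, your stage three is genuinely simpler than the paper's: because the posterior is supported on the $L^\infty$-bounded set $\CMb$, you bound the ``bad piece'' by $M$ times the posterior mass of the complement, whereas the paper (following the Gaussian-prior setting of \cite{MNP21a}) uses Cauchy--Schwarz together with a second-moment/evidence-lower-bound argument on the event $A_N'$. Your shortcut is valid, with one caveat: to get the rate $N^{-\omega}$ (not just consistency) you must use the quantitative conclusion of the contraction theorem, namely that the excess posterior mass is at most $e^{-(C+3)N\delta_N^2}$ with $P^N_{c_0}$-probability tending to one, which decays faster than any polynomial and hence is $o(N^{-\omega})$; the weaker statement that this mass ``vanishes in probability'' does not suffice for the rate.
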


\begin{remark}
    The assumption that $c_0\in \CMb \cap\mathcal{H}$ is weaker than in Theorem \ref{mainth}, where we assumed that $c_0$ is smooth, compactly supported in $\O_0$, and that $g_{n_{c_0}}$ is simple. Indeed, if $g_{n_{c_0}}$ is a smooth simple metric, $c_0$ necessarily belongs to $\CMb$ for appropriate values of $\ell,M$, and any $\beta$. Moreover, given any $c_0\in H^\alpha_0(\O_0)$, it is possible to choose $\widetilde{\Pi}$ so that its RKHS $\mathcal{H}$ contains $c_0$. Indeed, let $(f(x): x \in \O_0)$ be the so-called \emph{Mat\'{e}rn-Whittle process of regularity $\alpha$} (see \cite[Example 11.8]{GvdV17}), whose corresponding RKHS is $H^\alpha(\O_0)$. It follows from Lemma I.4 in \cite{GvdV17} that the sample paths of this process belong almost surely to $C^\beta(\overline{\O}_0)$. Now choose a cut-off function $\varphi \in C^\infty(\overline{\O}_0)$ such that $\varphi >0$ on $\O_0$, $\varphi$ and all its partial derivatives vanish on $\p\O_0$, and $\varphi^{-1}c_0 \in H^\alpha(\O_0)$. Define $\widetilde{\Pi}$ to be the probability law of $(\varphi(x)f(x): x \in \O_0)$. Then $\H = \left\{\varphi f  :  f \in H^\alpha(\O_0) \right\}$, which contains $c_0$. Therefore, Theorem \ref{main} is a more general and precise version of Theorem \ref{mainth}.
\end{remark}

\subsection{A General Contraction Theorem} \label{gencontr}

Our proof of Theorem \ref{main} will follow the same general strategy as in \cite{MNP21a}, with some modifications necessitated by the fact that our prior $\Pi$ is not in itself a Gaussian probability measure, but rather the restriction of such a measure to $\CMb$. We begin with a general posterior contraction result (Theorem \ref{g-contr}). This is a simplified version of \cite[Theorem 5.13]{MNP21a}, which suffices for us since our prior $\Pi$ independent of $N$. Before stating the result, we need to introduce some notation. Recall that for $c \in \CMb$, we defined $p_c$ as the probability density function 
$$p_c(x,y,z) = \frac{1}{\sqrt{2\pi}}\exp\left\{-\frac{1}{2}(z-Z_c(x,y))^2\right\} \qquad \textrm{for all } (x,y,z)\in \X, $$
where $\X = \p\O \times \p\O \times \R$. Given $c_1,c_2 \in \CMb$, let
$$ h(c_1,c_2) := \left(\int_{\X}(\sqrt{p_{c_1}}-\sqrt{p_{c_2}})^2 d\mu(x,y)\, dz\right)^{1/2} $$
denote the Hellinger distance between $p_{c_1}$ and $p_{c_2}$, 
$$ K(c_1,c_2) := \E_{c_1}\left[\log\left(\frac{p_{c_1}}{p_{c_2}}\right)\right] = \int_{\X}\log\left(\frac{p_{c_1}}{p_{c_2}}\right)p_{c_1}d\mu(x,y)\, dz$$
the Kullback-Leibler divergence, and
$$V(c_1,c_2) := \E_{c_1}\left[\log\left(\frac{p_{c_1}}{p_{c_2}}\right)\right]^2. $$
Also, for any $F \subseteq \CMb$ and $\delta >0$, we let $\N(F, h, \delta)$ denote the minimum number of $h$-balls of radius $\delta$ needed to cover $F$. 

\begin{Theorem}\label{g-contr}
	Let $\widehat{\Pi}$ be a Borel probability measure on $C_0^3(\O_0)$ supported on $\CMb$. Let $c_0 \in \CMb$ be fixed, and let $\delta_N$ be a sequence of positive numbers such that $\delta_N \to 0$ and $\sqrt{N}\delta_N \to \infty$ as $N \to \infty$. Assume that the following two conditions hold:
	\begin{enumerate}[(1)]
		\item There exists $C>0$ such that for all $N \in \mathbb{N}$,
		\begin{equation}\label{pmass}
			\widehat{\Pi}\left(\left\{c \in \CMb: K(c,c_0) \leq \delta_N^2, V(c,c_0) \leq \delta_N^2 \right\}\right) \geq e^{-CN\delta_N^2}.
		\end{equation}
		\item There exists $\widetilde{C}>0$ such that
		\begin{equation}\label{pcomp}
			\log \N(\CMb, h, \delta_N) \leq \widetilde{C}N\delta_N^2.
		\end{equation}
  \end{enumerate}
		Now suppose that we make i.i.d. observations $\mathcal{D}_N = (X_i,Y_i,Z_i)_{i=1}^N \sim P^N_{c_0}$. Then for some $k>0$ large enough, we have
		\begin{equation}
			P^N_{c_0}\left( \widehat{\Pi}\left( \left\{c \in \CMb : h(c,c_0) \leq k\delta_N\right\}|\mathcal{D}_N\right) \leq 1-e^{-(C+3)N\delta_N^2}\right) \to 0
		\end{equation}
		as $N \to \infty$.
\end{Theorem}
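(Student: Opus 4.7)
The plan is to follow the standard Ghosal--Ghosh--van der Vaart posterior contraction paradigm. Writing the posterior probability as
$$\widehat{\Pi}(A_N|\mathcal{D}_N) = \frac{N_N}{D_N}, \qquad A_N := \{c \in \CMb : h(c,c_0) > k\delta_N\},$$
with $N_N = \int_{A_N} \prod_{i=1}^N (p_c/p_{c_0})(X_i,Y_i,Z_i)\, d\widehat{\Pi}(c)$ and $D_N$ the analogous integral over all of $\CMb$, the goal is to show that on an event of $P_{c_0}^N$-probability tending to one, the denominator decays only mildly while the numerator decays exponentially, leaving a quantitative gap of at least $e^{-(C+3)N\delta_N^2}$.

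For the denominator, one restricts integration to the Kullback--Leibler neighborhood $B_N = \{c : K(c,c_0)\leq\delta_N^2,\ V(c,c_0)\leq\delta_N^2\}$. On this set the random variables $\log(p_c/p_{c_0})(X_i,Y_i,Z_i)$ have mean at least $-\delta_N^2$ and second moment at most $\delta_N^2$ by construction. A routine Fubini--Chebyshev calculation (the ``evidence lower bound'', cf.\ Lemma 8.10 of \cite{GvdV17}) produces an event $E_N$ of $P_{c_0}^N$-probability at least $1-(N\delta_N^2)^{-1} \to 1$ on which
$$D_N \geq \int_{B_N} \prod_{i=1}^N (p_c/p_{c_0})\, d\widehat{\Pi}(c) \geq \widehat{\Pi}(B_N)\, e^{-2N\delta_N^2} \geq e^{-(C+2)N\delta_N^2},$$
where the prior mass assumption \eqref{pmass} is used in the last step.

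For the numerator, the plan is to construct exponentially consistent tests via the Le Cam--Birg\'e machinery. Using the entropy bound \eqref{pcomp}, take a minimal $(k\delta_N/2)$-net of $\CMb$ in the Hellinger metric, of cardinality at most $e^{\widetilde{C}N\delta_N^2}$; at each net point $c_j$ with $h(c_j,c_0) > k\delta_N/2$, use the minimax likelihood-ratio test of $c_0$ against a Hellinger ball around $c_j$, whose Type I and Type II errors are both bounded by the Hellinger affinity raised to the $N$-th power, hence by $e^{-k^2 N\delta_N^2/8}$. Aggregating by maxima yields a test $\Psi_N:\X^N\to\{0,1\}$ with
$$E_{c_0}\Psi_N \lesssim e^{-(k^2/8-\widetilde{C})N\delta_N^2}, \qquad \sup_{c\in A_N} E_c(1-\Psi_N) \lesssim e^{-k^2 N\delta_N^2/8}.$$
Fubini applied to the numerator together with the Type II bound yields $E_{c_0}[(1-\Psi_N) N_N] \lesssim e^{-k^2 N\delta_N^2/8}$.

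Finally, one combines via $\widehat{\Pi}(A_N|\mathcal{D}_N) \leq \Psi_N + (1-\Psi_N) N_N / D_N$, restricts to $E_N$, and applies Markov's inequality to the two resulting terms, obtaining
$$P_{c_0}^N\bigl(\widehat{\Pi}(A_N|\mathcal{D}_N) > e^{-(C+3)N\delta_N^2}\bigr) \lesssim e^{-(k^2/8-\widetilde{C}-C-3)N\delta_N^2} + e^{-(k^2/8-2C-5)N\delta_N^2} + P_{c_0}^N(E_N^c),$$
which tends to zero once $k$ is chosen so that $k^2/8 > \max(\widetilde{C}+C+3,\, 2C+5)$. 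The principal technical obstacle is the Le Cam--Birg\'e step: one needs $(\CMb, h)$ to be separable so that a measurable countable net can be extracted, and the pointwise tests to be jointly measurable in $(c,\mathcal{D}_N)$. Both follow from the separability of the ambient Gaussian prior and the continuous dependence of $p_c$ on $c$ via $Z_c$ provided by Corollary \ref{forward-c}; beyond this, the argument is a direct specialization of the general scheme to our setting, simplified because $\widehat{\Pi}$ does not depend on $N$.
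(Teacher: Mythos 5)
Your proposal is correct and follows essentially the same route as the paper's proof: an evidence lower bound for the denominator via the prior-mass condition and a Fubini--Chebyshev argument, Hellinger tests built from the entropy bound (the paper invokes \cite[Theorem 7.1.4]{GN15} as a black box where you spell out the Le Cam--Birg\'e net construction), and a Markov--Fubini bound on the numerator with $k$ chosen large relative to $C$ and $\widetilde{C}$. The minor differences (explicit net versus cited testing theorem, and slightly looser bookkeeping of the exponents in the final display) do not affect correctness.
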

\begin{proof}
	Define
	\begin{equation}\label{bn}
		B_N = \left\{c \in \CMb :  K(c,c_0) \leq \delta_N^2, V(c,c_0) \leq \delta_N^2 \right\}, \qquad N \in \mathbb{N}. 
	\end{equation}
	By condition (1) and \cite[Lemma 7.3.2]{GN15}, we have that for any $\zeta >0$ and any probability measure $\widetilde{m}$ on $B_N$,
	$$ P^N_{c_0}\left( \int_{B_N} \prod_{i=1}^N \frac{p_c}{p_{c_0}}(X_i,Y_i,Z_i)d\widetilde{m}(c) \leq e^{-(1+\zeta)N\delta_N^2}\right) \leq \frac{1}{\zeta^2 N\delta_N^2}. $$
	In particular, choosing $\zeta =1$ and taking $\widetilde{m}$ to be the restriction of $\widehat{\Pi}$ to $B_N$ followed by normalization, we get that
	$$ P^N_{c_0}\left( \int_{B_N} \prod_{i=1}^N \frac{p_c}{p_{c_0}}(X_i,Y_i,Z_i)d\widehat{\Pi}(c) \leq \widehat{\Pi}(B_N) e^{-2N\delta_N^2}\right) \leq \frac{1}{N\delta_N^2} \xrightarrow{N \to \infty} 0. $$
	Set
	$$A_N = \left\{\int_{B_N} \prod_{i=1}^N \frac{p_c}{p_{c_0}}(X_i,Y_i,Z_i)d\widehat{\Pi}(c) \geq e^{-(2+C)N\delta_N^2}\right\},$$
	where $C$ is as in condition (1). It is clear that $A_N \supseteq \left\{ \int_{B_N} \prod_{i=1}^N\frac{p_c}{p_{c_0}}d\widehat{\Pi}(c) \geq \widehat{\Pi}(B_N)e^{-2N\delta_N^2}\right\}$, and therefore, $P^N_{c_0}(A_N) \to 1$ as $N \to \infty$.
	
	Next, we consider condition (2). Let $k > k' > 0$ be numbers to be determined later. Fix $N$ and define the function $N(\ve) = e^{\widetilde{C}N\delta_N^2}$ for all $ \ve > \ve_0 = k'\delta_N$. It follows from condition (2) that for any $\ve > \ve_0$,
	$$\N(\CMb, h, \ve/4) \leq \N(\CMb, h, k'\delta_N/4) \leq e^{\widetilde{C}N\delta_N^2} = N(\ve). $$
	Therefore, by \cite[Theorem 7.1.4]{GN15}, there exist test functions $\Psi_N = \Psi_N(\mathcal{D}_N)$ such that for some $K >0$,
	\[
	P^N_{c_0}[\Psi_N = 1] \leq \frac{N(\ve)}{K}e^{-KN\ve^2} \quad ; \quad \sup_{c: h(c,c_0) > \ve}\E^N_c[1-\Psi_N] \leq e^{-KN\ve^2}.
	\]
	Now let $l > \widetilde{C}$ be arbitrary. Setting $k = \sqrt{l / K}$ and $\ve = k\delta_N$, we can see that this implies
	\begin{equation}\label{tests}
		P^N_{c_0}[\Psi_N = 1] \to 0 \, \textrm{ as } N \to \infty \quad ; \quad \sup_{c: h(c,c_0)>k\delta_N}\E^N_c[1-\Psi_N] \leq e^{-l N\delta_N^2}. 
	\end{equation}
	Now define
	\[
	F_N = \{ c \in \CMb: h(c,c_0) \leq k\delta_N\} \]
	which is the event whose probability we want to bound. Then by \eqref{tests},
	\begin{align*}
		& \  P^N_{c_0}\left( \widehat{\Pi}(F_N^c|\D_N) \geq e^{-(C+3)N\delta_N^2}\right) \\
		= & \ P^N_{c_0}\left( \frac{\int_{F_N^c}\prod_{i=1}^N\frac{p_c}{p_{c_0}}(X_i,Y_i,Z_i)d\widehat{\Pi}(c)}{\int \prod_{i=1}^N \frac{p_c}{p_{c_0}}(X_i,Y_i,Z_i)d\widehat{\Pi}(c)} \geq e^{-(C+3)N\delta_N^2}, \ \Psi_N =0, \ A_N\right) +o(1) \\
		\leq & \ P^N_{c_0}\left((1-\Psi_N)\int_{F_N^c}\prod_{i=1}^N\frac{p_c}{p_{c_0}}(X_i,Y_i,Z_i)d\widehat{\Pi}(c) \geq e^{-(2C+5)N\delta_N^2}\right) +o(1).
	\end{align*}
	Now by Markov's inequality, this is further bounded above by
	\begin{align*}
		 & \ \E^N_{c_0}\left[(1-\Psi_N)\int_{F_N^c}\prod_{i=1}^N \frac{p_c}{p_{c_0}}(X_i,Y_i,Z_i)d\widehat{\Pi}(c)\right]e^{(2C+5)N\delta_N^2} +o(1) \\
		= & \ \left[\int_{F_N^c} \E^N_{c_0}\left[(1-\Psi_N)\prod_{i=1}^N\frac{p_c}{p_{c_0}}(X_i,Y_i,Z_i)\right]d\widehat{\Pi}(c)\right]e^{(2C+5)N\delta_N^2} +o(1) \quad \textrm{(by Fubini's Theorem)} \\
		= & \ \left[ \int_{c: h(c,c_0) > k\delta_N} \E^N_c[(1-\Psi_N)]d\widehat{\Pi}(c)\right]e^{(2C+5)N\delta_N^2} +o(1) \\
		\leq & \ e^{(2C+5-l)N\delta_N^2} +o(1).
	\end{align*}
	Now choosing $l > 2C+5$, the Theorem follows.
\end{proof}

\subsection{Properties of the Prior} \label{priorprop}

In this section, we will verify the assumptions of Theorem \ref{g-contr} when $\widehat{\Pi}=\Pi$. The key ingredient in the arguments is the forward continuity estimate from Corollary \ref{forward-c}. We begin by observing that the Hellinger distance between $c_1, c_2 \in \CMb$ is equivalent to the $L^2(\p \O \times \p \O)$ distance between $Z_{c_1}$ and $Z_{c_2}$.

\begin{Lemma}\label{hellinger-l2}
	There exists $\kappa=\kappa(\O,\gb,M)>0$ such that for all $c_1,c_2 \in \CMb$,
	$$\kappa\|Z_{c_1}-Z_{c_2}\|_{L^2}^2 \leq h^2(c_1,c_2) \leq \frac{1}{4\vol_{\gb}(\p\O)^2}\|Z_{c_1}-Z_{c_2}\|_{L^2}^2. $$
\end{Lemma}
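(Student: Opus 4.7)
The plan is to reduce the three-dimensional Hellinger integral to a one-dimensional integral over $\partial\Omega \times \partial\Omega$ by computing the Gaussian integral in $z$ explicitly, and then apply elementary one-variable inequalities to $1 - e^{-t}$ to obtain matching upper and lower bounds in terms of $\|Z_{c_1} - Z_{c_2}\|_{L^2}^2$.

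First, for fixed $(x,y) \in \partial\Omega \times \partial\Omega$, the densities $p_{c_1}(x,y,\cdot)$ and $p_{c_2}(x,y,\cdot)$ are both Gaussians in $z$ with unit variance and means $Z_{c_1}(x,y)$ and $Z_{c_2}(x,y)$ respectively. A direct computation (completing the square) gives
\[
\int_{\R}\sqrt{p_{c_1}(x,y,z)\,p_{c_2}(x,y,z)}\, dz = \exp\!\left(-\tfrac{1}{8}(Z_{c_1}(x,y)-Z_{c_2}(x,y))^2\right).
\]
Since each $p_{c_i}(x,y,\cdot)$ integrates to $1$ in $z$, Fubini yields
\[
h^2(c_1,c_2) = 2\int_{\partial\Omega \times \partial\Omega}\left(1 - \exp\!\left(-\tfrac{1}{8}(Z_{c_1}-Z_{c_2})^2(x,y)\right)\right)d\mu(x,y).
\]
Recalling that $\mu$ is the \emph{uniform probability} measure on $\partial\Omega \times \partial\Omega$ induced by $\gb$, we have $d\mu = \vol_{\gb}(\partial\Omega)^{-2}\, d\sigma_{\gb}$.

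For the upper bound, I simply use $1 - e^{-t} \leq t$ for $t \geq 0$:
\[
h^2(c_1,c_2) \leq 2\int \tfrac{1}{8}(Z_{c_1}-Z_{c_2})^2 d\mu = \frac{1}{4\,\vol_{\gb}(\partial\Omega)^2}\|Z_{c_1}-Z_{c_2}\|_{L^2}^2.
\]

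For the lower bound, the key observation is that $(Z_{c_1}-Z_{c_2})^2$ is uniformly bounded on the set where it is nonzero. Indeed, by Remark~\ref{d}, $Z_{c_1} = Z_{c_2}$ on $\{\dist_{\gb}(\xi,\eta)<\delta\}$, so this region contributes zero to both integrals. On the complementary set $\{\dist_{\gb}(\xi,\eta)\geq \delta\}$, the estimate \eqref{zcbound} gives $|Z_{c_i}(\xi,\eta)| \leq M + |\log \delta| + \log\max(1,\diam_{\gb}(\Omega))$ for $i=1,2$, so $(Z_{c_1}-Z_{c_2})^2 \leq T$ for a constant $T = T(\Omega,\gb,M)$. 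On $[0,T/8]$ the elementary inequality $1 - e^{-t} \geq \frac{1-e^{-T/8}}{T/8}\,t$ (by concavity of $1-e^{-t}$) yields
\[
h^2(c_1,c_2) \geq \frac{2(1-e^{-T/8})}{T}\int (Z_{c_1}-Z_{c_2})^2\, d\mu = \kappa\,\|Z_{c_1}-Z_{c_2}\|_{L^2}^2
\]
with $\kappa = \frac{2(1-e^{-T/8})}{T\,\vol_{\gb}(\partial\Omega)^2}$, which depends only on $\Omega, \gb, M$.

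The only subtlety is verifying that $(Z_{c_1}-Z_{c_2})^2$ is uniformly bounded on its support so that the linearization of $1-e^{-t}$ is valid; this is precisely what Remark~\ref{d} combined with \eqref{zcbound} provides. No deeper ingredients are needed beyond the Gaussian computation and these elementary scalar inequalities.
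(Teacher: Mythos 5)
Your proof is correct and follows essentially the same route as the paper: computing the Hellinger affinity $\rho(c_1,c_2)=\int \exp\{-\tfrac18(Z_{c_1}-Z_{c_2})^2\}\,d\mu$ via the Gaussian integral in $z$, then using $1-e^{-t}\le t$ for the upper bound and the secant-line (convexity) inequality on the bounded range supplied by Remark~\ref{d} and \eqref{zcbound} for the lower bound. The constants check out, so nothing further is needed.
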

\begin{proof}
	Consider the ``Hellinger affinity'' function
	$$\rho(c_1,c_2) = \int_{\X}\sqrt{p_{c_1}p_{c_2}}d\mu = 1-\frac{1}{2}h^2(c_1,c_2). $$
	We have
	\begin{align}
		\rho(c_1,c_2) =& \ \frac{1}{\sqrt{2\pi}}\int_{\X}\exp\left\{-\frac{1}{4}((z-Z_{c_1}(x,y))^2+(z-Z_{c_2}(x,y))^2)\right\} d\mu(x,y)\, dz \nonumber \\
		=& \ \frac{1}{\vol_{\gb}(\p\O \times \p\O)}\int_{\p \O \times \p \O} \exp\left\{-\frac{1}{4}(Z_{c_1}(x,y)^2+Z_{c_2}(x,y)^2)\right\} \nonumber\\
		   & \times \left[\frac{1}{\sqrt{2\pi}}\int_{-\infty}^{\infty}\exp\left\{-\frac{1}{2}\left(z-\frac{Z_{c_1}+Z_{c_2}}{2}\right)^2\right\}dz\right]\exp\left\{\frac{1}{8}(Z_{c_1}+Z_{c_2})^2\right\}dx\, dy \nonumber \\
		=& \ \frac{1}{\vol_{\gb}(\p\O)^2}\int_{\p \O \times \p \O}\exp\left\{-\frac{1}{8}(Z_{c_1}(x,y)-Z_{c_2}(x,y))^2\right\}dx\, dy \label{rhoform}.
	\end{align}
	Now applying the simple estimate $e^{-t} \geq 1-t$ for all $t \geq 0$, we get
	\begin{align*}
		\rho(c_1,c_2) &\geq \frac{1}{\vol_{\gb}(\p\O)^2}\int_{\p \O \times \p \O}\left[1-\frac{1}{8}(Z_{c_1}-Z_{c_2})^2\right] dx \, dy \\
		&= 1-\frac{1}{8\vol_{\gb}(\p\O)^2}\|Z_{c_1}-Z_{c_2}\|_{L^2}^2.
	\end{align*}
	Consequently,
	$$ h^2(c_1,c_2) = 2(1-\rho(c_1,c_2)) \leq \frac{1}{4\vol_{\gb}(\p\O)^2}\|Z_{c_1}-Z_{c_2}\|_{L^2}^2. $$
	Next, we use the fact $Z_{c_1},Z_{c_2}$ satisfy the uniform bounds \eqref{zcbound} on the support of $Z_{c_1}-Z_{c_2}$. Consequently, for all $x,y\in \p\O$, we have
    \begin{equation}
        |Z_{c_1}(x,y)-Z_{c_2}(x,y)|\leq \Delta, \label{zcdiff}
    \end{equation}
    where $\Delta = 2M+\log \diam_{\gb}(\O)-\log\delta$. Set $T=\Delta^2/8$ and observe that for all $t \in [0,T]$,
	$$e^{-t} \leq 1- \left(\frac{1-e^{-T}}{T}\right)t $$
	by the convexity of $t \mapsto e^{-t}$. Therefore, for $\kappa = \frac{1-e^{-T}}{4T}$,  we have
	\[
	\exp\left\{-\frac{1}{8}(Z_{c_1}(x,y)-Z_{c_2}(x,y))^2\right\} \leq 1-\frac{\kappa}{2}|Z_{c_1}(x,y)-Z_{c_2}(x,y)|^2 \]
	for all $(x,y) \in \p \O \times \p \O$. Integrating both sides of this inequality with respect to $d\mu(x,y)$ and applying \eqref{rhoform}, we get
	\begin{align*}
		\rho(c_1,c_2)  &\leq 1- \frac{\kappa}{2}\|Z_{c_1}-Z_{c_2}\|_{L^2}^2 \\
		\Rightarrow \ h^2(c_1,c_2) & \geq  \kappa\|Z_{c_1}-Z_{c_2}\|_{L^2}^2.
	\end{align*}
	This completes the proof.
\end{proof}

Now let us verify Condition (1) of Theorem \ref{g-contr} for $\Pi$.

\begin{Lemma}\label{smallball0}
	For $c_0 \in \CMb$ and $t>0$, define
	\begin{equation*}
		\mathcal{B}_N(t) = \{c \in \CMb : \|c-c_0\|_{C^\beta} \leq \delta_N/t \}, 
	\end{equation*}
	and let $B_N, \Pi,$ and $\delta_N$ be as in Theorem \ref{g-contr}. Then for some $t>0$ large enough, $\mathcal{B}_N(t) \subset B_N$ for all $N \in \mathbb{N}$. In particular,
	$$\Pi(B_N) \geq \Pi(\mathcal{B}_N(t)). $$
\end{Lemma}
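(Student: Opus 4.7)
The plan is to exploit the fact that our statistical model is a Gaussian location model in the coordinate $z$, so the Kullback--Leibler divergence $K(c,c_0)$ and the variance term $V(c,c_0)$ admit explicit expressions in terms of the $L^2(\p\O\times\p\O)$-distance between $Z_c$ and $Z_{c_0}$. Once this is observed, Corollary \ref{forward-c} reduces everything to controlling $\|c-c_0\|_{L^2(\O_0)}$, which in turn is dominated by $\|c-c_0\|_{C^\beta}$ on the bounded domain $\O_0$.

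First I would compute $K(c,c_0)$ and $V(c,c_0)$ directly. Under $P_c$ we may write the $z$ coordinate as $z=Z_c(x,y)+\varepsilon$ with $\varepsilon \sim N(0,1)$ independent of $(x,y)\sim\mu$, so
\[
\log\frac{p_c}{p_{c_0}}(x,y,z) \;=\; \tfrac{1}{2}(z-Z_{c_0})^2-\tfrac{1}{2}(z-Z_c)^2 \;=\; \varepsilon\bigl(Z_c-Z_{c_0}\bigr)+\tfrac{1}{2}\bigl(Z_c-Z_{c_0}\bigr)^2.
\]
Taking expectation under $P_c$, the linear term in $\varepsilon$ vanishes and we obtain
\[
K(c,c_0) \;=\; \tfrac{1}{2}\int_{\p\O\times\p\O}(Z_c-Z_{c_0})^2\,d\mu \;=\; \frac{1}{2\,\vol_{\gb}(\p\O)^2}\|Z_c-Z_{c_0}\|_{L^2(\p\O\times\p\O)}^2.
\]
Squaring the log-likelihood ratio and taking expectation similarly yields
\[
V(c,c_0) \;=\; \int (Z_c-Z_{c_0})^2\,d\mu + \tfrac{1}{4}\int (Z_c-Z_{c_0})^4\,d\mu.
\]
The fourth-moment term is the only subtlety, but Remark \ref{d} together with the bound \eqref{zcdiff} from the proof of Lemma \ref{hellinger-l2} shows that $|Z_c - Z_{c_0}|\leq \Delta$ pointwise on $\p\O\times\p\O$ for a constant $\Delta = \Delta(\O,\gb,\delta,M)$, so that $\int (Z_c-Z_{c_0})^4\,d\mu \leq \Delta^2\int(Z_c-Z_{c_0})^2\,d\mu$. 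Hence $V(c,c_0)\lesssim_{\O,\gb,\delta,M}\|Z_c-Z_{c_0}\|_{L^2}^2$, and the same bound trivially holds for $K(c,c_0)$.

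Next I would invoke the forward stability estimate from Corollary \ref{forward-c} to conclude
\[
\max\bigl\{K(c,c_0),\,V(c,c_0)\bigr\} \;\leq\; C\,\|Z_c-Z_{c_0}\|_{L^2(\p\O\times\p\O)}^2 \;\leq\; C (C_2')^2\,\|c-c_0\|_{L^2(\O_0)}^2
\]
for some constant $C = C(\O,\gb,\delta,M)>0$. Since $\O_0$ has finite volume with respect to $\gb$, the embedding $C^\beta(\overline{\O}_0)\hookrightarrow L^2(\O_0)$ gives $\|c-c_0\|_{L^2(\O_0)}\leq C''\|c-c_0\|_{C^\beta}$ for some $C''>0$. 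Combining these, there is a constant $A = A(\O,\O_0,\gb,\ell,M)>0$ with
\[
\max\bigl\{K(c,c_0),\,V(c,c_0)\bigr\} \;\leq\; A\,\|c-c_0\|_{C^\beta}^2 \qquad \text{for all } c\in\CMb.
\]

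Finally, choosing $t\geq \sqrt{A}$, any $c\in \mathcal{B}_N(t)$ satisfies $\|c-c_0\|_{C^\beta}\leq \delta_N/t$, and therefore $K(c,c_0), V(c,c_0)\leq A\,\delta_N^2/t^2 \leq \delta_N^2$, which places $c$ in $B_N$. This proves $\mathcal{B}_N(t)\subset B_N$, and the final assertion $\Pi(B_N)\geq \Pi(\mathcal{B}_N(t))$ is then monotonicity of the prior. The main potential obstacle was the fourth-moment term in $V$, but it is cleanly dealt with once the uniform pointwise bound $|Z_c-Z_{c_0}|\leq \Delta$ is noted; beyond that, the argument is a direct application of the forward continuity estimate already established in Section 2.
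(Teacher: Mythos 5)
Your proposal is correct and follows essentially the same route as the paper: write the log-likelihood ratio of the Gaussian location model explicitly, identify $K$ and $V$ with second and fourth moments of $Z_c-Z_{c_0}$, control the fourth moment via the uniform bound $|Z_c-Z_{c_0}|\leq\Delta$, and then apply Corollary \ref{forward-c} together with the embedding $C^\beta\hookrightarrow L^2$. (The only cosmetic difference is that you take expectations under $P_c$ rather than $P_{c_0}$; in this location family the two conventions give the same quantities.)
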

\begin{proof}
	We need to verify that if $t$ is large enough, then for any $c\in \mathcal{B}_N(t)$, we have $K(c,c_0) \leq \delta_N^2$ and $V(c,c_0) \leq \delta_N^2$. Consider a random observation $(X,Y,Z)$, where $(X,Y)$ is a pair of boundary points chosen with respect to the uniform probability measure $\mu$, and $Z= Z_{c_0}(X,Y)+\epsilon$, with $\epsilon \sim N(0,1)$ independent of $(X,Y)$. Observe that for any $c \in \mathcal{B}_N(t)$,  
	\begin{align}
		\log \frac{p_{c_0}}{p_{c}}(X,Y,Z) &= -\frac{1}{2}[(Z-Z_{c_0}(X,Y))^2-(Z-Z_c(X,Y))^2] \nonumber\\
		&= \frac{1}{2}(Z_c(X,Y)-Z_{c_0}(X,Y))^2 -\e(Z_c(X,Y)-Z_{c_0}(X,Y)). \label{pcpform}
	\end{align}
    Since $\E[\e|X,Y]=0$, we have
	\begin{align}
        K(c,c_0) &= \E_{c_0}\left[\log\frac{p_{c_0}}{p_c}(X,Y,Z)\right] \nonumber \\
		  &= \E^{\mu}\left[\frac{1}{2}(Z_c(X,Y)-Z_{c_0}(X,Y))^2\right] \label{kform}\\
		&= \frac{1}{2\vol_{\gb}(\p\O\times \p\O)}\int_{\p \O \times \p \O}(Z_c(x,y)-Z_{c_0}(x,y))^2 \, dx \, dy \nonumber\\
		&= \frac{1}{2\vol_{\gb}(\p\O)^2}\|Z_c-Z_{c_0}\|_{L^2}^2 \nonumber\\
		&\lsim \|c-c_0\|^2_{L^2} \qquad (\textrm{by Corollary \ref{forward-c}}) \nonumber\\
        &\lsim \frac{\delta_N^2}{t^2}.\label{kdelta}
	\end{align}
	So it follows that if $t$ is large enough, $K(c,c_0)\leq \delta_N^2$ for all $c \in \mathcal{B}_N(t)$. Next, consider
	\begin{align*}
        V(c,c_0) &= \E_{c_0}\left[\log \frac{p_{c_0}}{p_{c}}(X,Y,Z)\right]^2 \\
        &\leq  2\E^\mu\left[\frac{1}{2}(Z_{c}-Z_{c_0})^2\right]^2 +2\E^\mu\left[(Z_c-Z_{c_0})^2\E_\e[\e^2]\right] \qquad \textrm{(by \eqref{pcpform})} \\
        &= \frac{1}{2}\int_{\p\O \times \p\O}|Z_c-Z_{c_0}|^4d\mu(x,y) + 2\E^{\mu}[Z_c-Z_{c_0}]^2 \qquad \textrm{(since $\E[\e^2]=1$)}\\
        &\leq \frac{\|Z_c-Z_{c_0}\|^2_{L^\infty}}{2\vol_{\gb}(\p\O)^2}\|Z_c-Z_{c_0}\|^2_{L^2}+4K(c,c_0)
    \end{align*}
    by \eqref{kform}. It follows from \eqref{zcdiff} that $\|Z_c-Z_{c_0}\|_{L^\infty}< \Delta$, where $\Delta>0$ depends only on $\O,\gb,\delta$. Consequently, 
	\begin{align*}
        V(c,c_0) &\lsim  \|Z_c-Z_{c_0}\|^2_{L^2} +K(c,c_0) \\
		&\lsim  C_2'^2\|c-c_0\|^2_{L^2} +K(c,c_0) \qquad (\textrm{by Corollary \ref{forward-c}}) \\
        &\lsim \|c-c_0\|^2_{C^\beta} + \frac{\delta_N^2}{t^2} \qquad \textrm{(by \eqref{kdelta})} \\
        &\lsim \frac{\delta_N^2}{t^2}.
	\end{align*}
	This shows that for $t>0$ large enough, we also get $V(c,c_0)\leq \delta_N^2$ for all $c \in \mathcal{B}_N(t)$.
\end{proof}
Next, we will establish a lower bound for $\Pi(\mathcal{B}_N(t))$, which will follow from estimates of $\widetilde{\Pi}$- measures of sets of the form $\{c  :  \|c\|_{C^\beta}\leq \ve\}$ when $\ve>0$ is small. To this end, it is convenient to work with H\"{o}lder-Zygmund spaces $C^s_*(\O_0)$, with $s>0$ (see \cite{Triebel} for a detailed treatment). If $s$ is not an integer, $C^s_*(\O_0)$ is simply the H\"{o}lder space $C^s(\overline{\O}_0)$. On the other hand, if $s$ is a positive integer, $C^s_*(\O_0)$ is a larger space than $C^s(\overline{\O}_0)$, and is defined by the norm
$$\|f\|_{C^s_*(\O_0)} = \sum_{|a|\leq s-1}\sup_{x \in \O_0}|\p^a f(x)| + \sum_{|a|= s-1}\sup_{x\in \O_0, \, h\neq 0}\frac{|\p^af(x+h)+\p^af(x-h)-2f\p^a(x)|}{|h|}.$$
In either case, it is easy to see that $\|f\|_{C^s_*}\leq \|f\|_{C^s}$ for all $f \in C^s(\overline{\O}_0)$. It turns out that $C^s_*(\O_0)$ coincides with the Besov space $B^s_{\infty,\infty}(\O_0)$, which allows us to use various embedding and approximation results from Besov space theory. 

Before proceeding, let us fix $\nu>0$ such that
\begin{equation}\label{nu}
	\nu > \max\left\{\frac{2m}{2(\alpha-\beta)-m},\frac{m}{\beta}\right\}, \qquad \textrm{and define} \qquad \delta_N = N^{-1/(2+\nu)}. 
\end{equation}
It is easy to verify that $\delta_N \to 0$ and $\sqrt{N}\delta_N = N^{\tfrac{\nu}{2(2+\nu)}} \to \infty$ as $N \to \infty$.

\begin{Lemma}\label{smallball}
	Let $c_0 \in \CMb\cap \H$, and define $\delta_N$ as in \eqref{nu}. Then for $t>0$ large enough, there exists $C' = C'(\O,\O_0, \gb,\alpha,\beta,\ell,M,c_0,t)>0$ such that for all $N \in \mathbb{N}$,
	\[
	\Pi(\mathcal{B}_N(t)) \geq \exp\{-C'N\delta_N^2\}. \]
	In particular, there exists $C = C(\O,\O_0,\gb, \alpha, \beta, \ell, M, c_0)>0$ such that for all $N \in \mathbb{N}$,
	\[
	\Pi(B_N) \geq \exp\{-CN\delta_N^2\}. \]
\end{Lemma}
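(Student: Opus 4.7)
The strategy is the standard centered-Gaussian small-ball/Cameron--Martin shift argument, together with two pieces of bookkeeping: absorbing the fixed normalising constant $\widetilde{\Pi}(\CMb)>0$ into the exponent, and invoking Lemma \ref{smallball0} for the concluding assertion.

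First I would record a topological observation: since $\CMb$ is an open subset of $C_0^\beta(\O_0)$ (Remark \ref{open}) and contains $c_0$, there is some $\rho_0>0$ such that every $C^\beta$-ball around $c_0$ of radius $<\rho_0$ is contained in $\CMb$. Hence for every $N$ large enough that $\delta_N/t<\rho_0$,
$$\widetilde{\Pi}(\mathcal{B}_N(t)) \;=\; \widetilde{\Pi}\bigl(\{c \in C_0^\beta(\O_0):\,\|c-c_0\|_{C^\beta}<\delta_N/t\}\bigr).$$
Since $c_0\in\H$, the Cameron--Martin shift for centered Gaussian measures (see e.g.\ \cite[Prop.~11.19]{GvdV17}) gives, for every $\ve>0$,
$$\widetilde{\Pi}\bigl(\|c-c_0\|_{C^\beta}<2\ve\bigr) \;\geq\; e^{-\|c_0\|_\H^2/2}\,\widetilde{\Pi}\bigl(\|c\|_{C^\beta}<\ve\bigr).$$

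The centered small-ball probability on the right is then estimated via the Kuelbs--Li/small-ball-entropy correspondence for Gaussian measures (\cite[Ch.~11]{GvdV17}). Because $\H\hookrightarrow H^\alpha(\O_0)$ continuously and $\alpha>\beta+m/2$, the Sobolev compact embedding $H^\alpha(\O_0)\hookrightarrow C^\beta(\overline{\O}_0)$ yields the metric-entropy bound
$$\log N\bigl(\text{unit ball of }\H,\,\|\cdot\|_{C^\beta},\,\ve\bigr) \;\lesssim\; \ve^{-m/(\alpha-\beta)},$$
which the Kuelbs--Li correspondence transfers into
$$-\log\widetilde{\Pi}\bigl(\|c\|_{C^\beta}<\ve\bigr) \;\lesssim\; \ve^{-\gamma},\qquad \gamma \;:=\; \frac{2m}{2(\alpha-\beta)-m}.$$
Combining the two displayed inequalities with $\ve=\delta_N/(2t)$ yields
$$-\log\widetilde{\Pi}(\mathcal{B}_N(t)) \;\lesssim\; \tfrac{1}{2}\|c_0\|_\H^2 \;+\; t^\gamma\,\delta_N^{-\gamma}.$$

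By the choice of $\nu$ in \eqref{nu} we have $\gamma<\nu$, and since $\delta_N=N^{-1/(2+\nu)}$ this gives $\delta_N^{-\gamma}\leq C_*\,\delta_N^{-\nu}=C_*\,N\delta_N^2$ uniformly in $N$; the constant $\tfrac{1}{2}\|c_0\|_\H^2$ is likewise $\lesssim N\delta_N^2$ since $N\delta_N^2\to\infty$. Thus $\widetilde{\Pi}(\mathcal{B}_N(t))\geq e^{-C'N\delta_N^2}$ for all $N$ large enough; dividing by the fixed positive constant $\widetilde{\Pi}(\CMb)$ and enlarging $C'$ to absorb the finitely many small values of $N$ (for which $\Pi(\mathcal{B}_N(t))$ is a fixed positive number) produces the first inequality. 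The second inequality is then immediate from Lemma \ref{smallball0}: choosing $t$ large gives $\mathcal{B}_N(t)\subset B_N$, whence $\Pi(B_N)\geq\Pi(\mathcal{B}_N(t))\geq e^{-CN\delta_N^2}$. The main obstacle I expect is the centered small-ball estimate in the middle paragraph---tracking the precise exponent $\gamma$ through the Sobolev embedding and Kuelbs--Li correspondence is what forces the particular threshold on $\nu$ in \eqref{nu}; the rest is essentially elementary inclusions and arithmetic.
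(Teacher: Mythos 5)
Your proposal is correct and follows essentially the same route as the paper: Cameron--Martin shift of the centered Gaussian small ball, the metric-entropy/small-ball correspondence of Kuelbs--Li/Li--Linde applied via the embedding $\H\hookrightarrow H^\alpha(\O_0)$, the identity $\delta_N^{-\nu}=N\delta_N^2$, and Lemma \ref{smallball0} for the concluding inclusion. The only refinement in the paper's version is that, because $\beta$ may be an integer (so that $C^\beta$ is not a Besov space and the Edmunds--Triebel entropy bounds do not apply to it directly), the entropy and small-ball estimates are routed through a H\"older--Zygmund space $C^\gamma_*$ with $\gamma$ slightly larger than $\beta$, the strict inequality in \eqref{nu} leaving just enough room to still have $\nu>2m/(2(\alpha-\gamma)-m)$.
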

\begin{proof}
	The sets $\{b \in C^3_0(\O_0): \|b\|_{C^\beta}\leq \delta\}$ for $\delta>0$ are convex and symmetric. Hence by \cite[Corollary 2.6.18]{GN15},
	$$ \widetilde{\Pi}(\|c-c_0\|_{C^\beta} \leq \delta_N/t) \geq e^{-\|c_0\|_{\H}^2/2}\widetilde{\Pi}(\|c\|_{C^\beta} \leq \delta_N/t). $$
	Moreover, since $c_0 \in \CMb$, which is open with respect to the $C^\beta$ metric, we have for all sufficiently large $t >0$,
	$$\Pi(\mathcal{B}_N(t)) = \Pi(\|c-c_0\|_{C^\beta}\leq \delta_N/t) = \frac{ \widetilde{\Pi}(\|c-c_0\|_{C^\beta}\leq \delta_N/t)}{\widetilde{\Pi}(\CMb)}, $$
	and therefore,
 \begin{equation}
     \Pi(\mathcal{B}_N(t)) \geq e^{-\|c_0\|_{\H}^2/2}\frac{\widetilde{\Pi}(\|c\|_{C^\beta}\leq \delta_N/t)}{\widetilde{\Pi}(\CMb)}. \label{pibn}
 \end{equation}
    Next, choose a real number $\gamma$ such that
    \begin{equation}
       \beta < \gamma < \alpha-\frac{m}{2}, \qquad \nu > \frac{2m}{2(\alpha-\gamma)-m}. \label{gammadef} 
    \end{equation}
    Alternatively, if $\beta$ is not an integer, we can simply set $\gamma=\beta$. In either case, we have $\|f\|_{C^\beta}\leq \|f\|_{C^\gamma_*}$ for all $f \in C^\gamma_*(\O_0)$.
    
	Now recall our assumption that the RKHS $\H$ of $\widetilde{\Pi}$ is continuously embedded into $H^\alpha(\O_0)$. We know from \cite[Theorem 3.1.2]{ET92} that the unit ball $U$ of this space satisfies
	$$\log \N(U, \|\cdot\|_{C^\gamma_*}, \ve) \leq \left(\frac{A}{\ve}\right)^{\tfrac{m}{(\alpha-\gamma)}} $$
	for some fixed $A >0$ and all $\ve >0$ small enough.
	Therefore, by \cite[Theorem 1.2]{LL99}, there exists $D >0$ such that for all $\ve >0$ small enough,
	$$\widetilde{\Pi}(\|c\|_{C^\beta}\leq \ve) \geq \widetilde{\Pi}(\|c\|_{C^\gamma_*}\leq \ve) \geq  \exp\left\{-D\ve^{-\tfrac{2m}{2(\alpha-\gamma)-m}}\right\}.$$
	Consequently, \eqref{pibn} implies that for $t>0$ large enough,
	\begin{align*}
		\Pi(\mathcal{B}_N(t)) &\geq  \frac{1}{\widetilde{\Pi}(\CMb)}\exp\left\{-\frac{\|c_0\|^2_{\H}}{2}-Dt^{\tfrac{2m}{2(\alpha-\gamma)-m}}\delta_N^{-\tfrac{2m}{2(\alpha-\gamma)-m}}\right\} \\
		&>   \frac{1}{\widetilde{\Pi}(\CMb)}\exp\left\{-\frac{\|c_0\|^2_{\H}}{2}-Dt^{\tfrac{2m}{2(\alpha-\gamma)-m}}\delta_N^{-\nu}\right\} \qquad \textrm{(by \eqref{nu} and \eqref{gammadef})}\\
		&=  \frac{1}{\widetilde{\Pi}(\CMb)}\exp\left\{-\frac{\|c_0\|^2_{\H}}{2}-Dt^{\tfrac{2m}{2(\alpha-\gamma)-m}}N\delta_N^2\right\} \\
		&\geq  \exp\{-C'N\delta_N^2\}
	\end{align*}
	for $C'= \log\left(\widetilde{\Pi}(\CMb)\right) + \frac{\|c_0\|^2_{\H}}{2}+Dt^{\tfrac{2m}{2(\alpha-\gamma)-m}}$. It now follows from Lemma \ref{smallball0} that for $t>0$ sufficiently large, there exists $C>0$ such that $\Pi(B_N) \geq \exp\{-CN\delta_N^2\}$. This completes the proof.
\end{proof}
Thus, we have verified Condition (1) of Theorem \ref{g-contr}. The next Lemma verifies Condition (2).

\begin{Lemma}\label{complexity}
	There exists $\widetilde{C} = \widetilde{C}(\O,\O_0,\gb,\beta,\ell)>0$ such that
	$$ \log \N(\CMb, h, \delta_N) \leq \widetilde{C}N\delta_N^2. $$
\end{Lemma}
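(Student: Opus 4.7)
The plan is to reduce the Hellinger entropy of $\CMb$ to the $L^2(\O_0)$-entropy of a $C^\beta$-ball via the two continuity estimates we have already proved, and then apply a standard Besov/Hölder covering bound.

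First, by Lemma \ref{hellinger-l2}, for any $c_1,c_2\in\CMb$
\[
h(c_1,c_2) \leq \frac{1}{2\vol_{\gb}(\p\O)}\|Z_{c_1}-Z_{c_2}\|_{L^2(\p\O\times\p\O)},
\]
and by Corollary \ref{forward-c},
\[
\|Z_{c_1}-Z_{c_2}\|_{L^2(\p\O\times\p\O)} \leq C_2'\|c_1-c_2\|_{L^2(\O_0)}.
\]
Combining these, there exists a constant $K=K(\O,\O_0,\gb,\ell,M)>0$ such that $h(c_1,c_2)\leq K\|c_1-c_2\|_{L^2(\O_0)}$. Consequently, any $\|\cdot\|_{L^2(\O_0)}$-cover of $\CMb$ of radius $\delta_N/K$ yields an $h$-cover of radius $\delta_N$, and therefore
\[
\N(\CMb, h, \delta_N) \leq \N\!\left(\CMb,\|\cdot\|_{L^2(\O_0)}, \delta_N/K\right).
\]

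Next, by definition of $\CMb$, every $c\in\CMb$ lies in the $C^\beta$-ball $\{c\in C^\beta_0(\O_0):\|c\|_{C^\beta}<M\}$, which in turn embeds into the unit ball of the Besov space $B^\beta_{\infty,\infty}(\O_0)$ up to the factor $M$. By the classical metric entropy estimate for Besov embeddings into $L^2$ on bounded Lipschitz domains (see e.g.\ \cite[Theorem 3.3.2]{ET92} or the embedding used in Lemma \ref{smallball}), there is a constant $A=A(\O_0,\beta,M)>0$ such that for all sufficiently small $\ve>0$,
\[
\log \N\!\left(\CMb,\|\cdot\|_{L^2(\O_0)},\ve\right)\ \leq\ A\,\ve^{-m/\beta}.
\]

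Plugging $\ve=\delta_N/K$ into this estimate gives
\[
\log\N(\CMb,h,\delta_N)\ \leq\ A(K/\delta_N)^{m/\beta}\ =\ A K^{m/\beta}\,\delta_N^{-m/\beta}.
\]
It remains to verify that the right-hand side is bounded by $\widetilde{C}N\delta_N^2$. Since $\delta_N=N^{-1/(2+\nu)}$ from \eqref{nu}, we compute
\[
\frac{\delta_N^{-m/\beta}}{N\delta_N^2}\ =\ N^{-1}\delta_N^{-(2+m/\beta)}\ =\ N^{-1}N^{(2+m/\beta)/(2+\nu)}\ =\ N^{-(\nu-m/\beta)/(2+\nu)}.
\]
By the choice of $\nu$ in \eqref{nu}, we have $\nu>m/\beta$, so the exponent is negative and this ratio tends to $0$. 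Hence for all $N$ large enough (and in fact for all $N\in\mathbb{N}$ after possibly increasing the constant), we obtain
\[
\log\N(\CMb,h,\delta_N)\ \leq\ \widetilde{C}\,N\delta_N^2,
\]
as required. The only mildly delicate point is the Besov/Hölder entropy bound, which is a classical fact but relies on $\O_0$ being a smooth bounded domain so that the interpolation/extension machinery applies; this is guaranteed by our standing assumptions on $\O_0$.
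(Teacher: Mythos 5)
Your proposal is correct and follows essentially the same route as the paper: both reduce the Hellinger covering number to the $L^2(\O_0)$ covering number of the $C^\beta$-ball of radius $M$ via Lemma \ref{hellinger-l2} and Corollary \ref{forward-c}, invoke the metric entropy bound for the embedding of H\"older--Besov balls into $L^2$ from \cite{ET92}, and close the argument using $\nu > m/\beta$ together with $\delta_N^{-\nu} = N\delta_N^2$. The only differences are cosmetic (order of the two reductions and the exact theorem number cited in \cite{ET92}), so there is nothing to add.
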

\begin{proof}
	In order to construct a covering of $\CMb$, it suffices to construct such a covering of the $C^\beta_*(\O_0)$ - ball of radius $M$ centered at $0$. Therefore, if $U_\beta$ denotes the unit ball of $C^\beta_*(\O_0)$,
	\[
	\log \N(\CMb, \|\cdot \|_{L^2}, \delta_N) \leq \log \N(MU_\beta, \|\cdot\|_{L^2}, \delta_N). \]
	Now applying \cite[Theorem 3.1.2]{ET92} to the inclusion $C^\beta_*(\O_0) \hookrightarrow L^2(\O_0)$, we have
	$$\log \N(\CMb, \|\cdot \|_{L^2}, \delta_N) \leq \left(\frac{A'}{\delta_N}\right)^{\frac{m}{\beta}} $$
	for some $A' >0$. Since $\nu > m/\beta$, we get
	$$\log \N(\CMb, \|\cdot\|_{L^2},\delta_N) \leq b\delta_N^{-\nu} = b N\delta_N^2,$$
	where $b>0$. Now, Lemma \ref{hellinger-l2} and Corollary \ref{forward-c} imply that an $L^2$ ball of radius $\delta_N$ centered at any $c\in \CMb$ is contained in the Hellinger ball of radius $\frac{C_2'}{2\vol_{\gb}(\p\O)}\delta_N$ centered at the same point. Therefore, by suitably rescaling the constant $b$ to $\widetilde{C}(\O,\O_0,\gb,\beta,\ell,M)>0$, we get the desired complexity bound
	$$ \log \N(\CMb, h, \delta_N) \leq \widetilde{C}N\delta_N^2. $$
\end{proof}

\subsection{Posterior Convergence}

In this section, we will combine the results of Sections \ref{gencontr} and \ref{priorprop} to prove Theorem \ref{main}.

\begin{Theorem}\label{postcontr}
	Let $\Pi,\alpha, \beta, M, c_0$ be as in Theorem \ref{main}, $\nu,\delta_N$ as in \eqref{nu}, and $C>0$ as in Lemma \ref{smallball}. Then for $k'>0$ large enough, we have
	\begin{equation}\label{post1}
		P^N_{c_0}\left( \Pi(\{c \in \CMb : \|Z_c-Z_{c_0}\|_{L^2} \leq k'\delta_N\}|\mathcal{D}_N) \geq 1-e^{-(C+3)N\delta_N^2}\right) \to 1
	\end{equation}
	as $N \to \infty$. Moreover, for all $k''>0$ large enough,
	\begin{equation}\label{post2}
		P^N_{c_0}\left(\Pi(\{c \in \CMb : \|c-c_0\|_{L^2} \geq k''\delta_N^{1/2} \}|\mathcal{D}_N)\geq e^{-(C+3)N\delta_N^2}\right) \to 0
	\end{equation}
	as $N \to \infty$.
\end{Theorem}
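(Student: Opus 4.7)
The plan is to derive \eqref{post1} as a direct application of Theorem \ref{g-contr} (whose hypotheses have already been verified in Section \ref{priorprop}), and then to obtain \eqref{post2} from \eqref{post1} by combining interpolation in Sobolev norms with the inverse stability estimate of Corollary \ref{inverse-c}.

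For \eqref{post1}, I would apply Theorem \ref{g-contr} with $\widehat{\Pi} = \Pi$. The two hypotheses are exactly what Lemmas \ref{smallball} and \ref{complexity} provide: \eqref{pmass} follows from the small-ball estimate $\Pi(B_N) \geq e^{-CN\delta_N^2}$, and \eqref{pcomp} from the metric entropy bound in the Hellinger distance on $\CMb$. The choice $\delta_N = N^{-1/(2+\nu)}$ from \eqref{nu} satisfies $\delta_N \to 0$ and $\sqrt{N}\delta_N \to \infty$, so Theorem \ref{g-contr} yields some $k>0$ such that
\[
P^N_{c_0}\bigl(\Pi(\{c \in \CMb : h(c,c_0) \leq k\delta_N\}\mid\mathcal{D}_N) \geq 1 - e^{-(C+3)N\delta_N^2}\bigr) \to 1.
\]
By the lower bound $h^2 \geq \kappa\|Z_c - Z_{c_0}\|_{L^2}^2$ from Lemma \ref{hellinger-l2}, the Hellinger ball of radius $k\delta_N$ around $c_0$ is contained in $\{c : \|Z_c - Z_{c_0}\|_{L^2} \leq k'\delta_N\}$ for $k' = k/\sqrt{\kappa}$, which proves \eqref{post1}.

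For \eqref{post2}, the key tool is the standard interpolation inequality on the compact manifold $\p\O\times\p\O$,
\[
\|f\|_{H^1(\p\O\times\p\O)} \lesssim \|f\|_{L^2(\p\O\times\p\O)}^{1/2}\|f\|_{H^2(\p\O\times\p\O)}^{1/2}.
\]
Applying this with $f = Z_c - Z_{c_0}$ and invoking the uniform bound $\|Z_c - Z_{c_0}\|_{H^2} \leq C_3'$ from Theorem \ref{h2bounds}, one sees that on the event $\|Z_c - Z_{c_0}\|_{L^2} \leq k'\delta_N$ one has $\|Z_c - Z_{c_0}\|_{H^1} \lesssim \delta_N^{1/2}$. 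Corollary \ref{inverse-c} then gives $\|c - c_0\|_{L^2} \lesssim \delta_N^{1/2}$, so for $k''>0$ sufficiently large,
\[
\{c \in \CMb : \|c - c_0\|_{L^2} \geq k''\delta_N^{1/2}\} \subseteq \{c \in \CMb : \|Z_c - Z_{c_0}\|_{L^2} > k'\delta_N\}.
\]
Hence the posterior mass of the left-hand set is dominated by that of the right-hand set, and \eqref{post2} follows immediately from \eqref{post1}.

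The main point of interest is the rate loss between the two statements: the inverse stability estimate is formulated in the $H^1 \to L^2$ direction, while the general contraction theorem delivers only the $L^2$-rate on $Z_c - Z_{c_0}$. Interpolating against the a priori $H^2$-ceiling is the only available mechanism to bridge this gap, and it costs a square root in the rate, producing the eventual $\delta_N^{1/2}$ in \eqref{post2}. Everything else is bookkeeping: verifying that the inclusion of events is strict enough to inherit the same exponential posterior-mass bound $e^{-(C+3)N\delta_N^2}$, which is automatic since the inclusion is deterministic.
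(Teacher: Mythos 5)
Your proposal is correct and follows essentially the same route as the paper: Theorem \ref{g-contr} plus Lemmas \ref{smallball} and \ref{complexity} for \eqref{post1}, then interpolation between the $L^2$ and $H^2$ bounds on $Z_c-Z_{c_0}$ combined with Corollary \ref{inverse-c} for \eqref{post2}. Your explicit use of Lemma \ref{hellinger-l2} to pass from the Hellinger ball to the $L^2$-ball in $Z_c$ is a step the paper leaves implicit, and it is a worthwhile clarification.
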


\begin{proof}
	Combining Lemmas \ref{smallball} and \ref{complexity} with Theorem \ref{g-contr}, we get \eqref{post1} for all sufficiently large $k'>0$. To get \eqref{post2}, consider the event
	$$E_N = \{c \in \CMb : \|Z_c-Z_{c_0}\|_{L^2} \leq k'\delta_N \}.$$
	By Corollary \ref{inverse-c}, for any $c \in E_N$, 
	\begin{eqnarray*}
		\|c-c_0\|_{L^2} &\leq & C_1'\|Z_c-Z_{c_0}\|_{H^1} \\
		&\leq& C_1'\|Z_c-Z_{c_0}\|_{L^2}^{1/2}\|Z_c-Z_{c_0}\|_{H^{2}}^{1/2}
	\end{eqnarray*}
	by the standard interpolation result for Sobolev spaces. Therefore, by Theorem \ref{h2bounds},
	\[
	\|c-c_0\|_{L^2} \leq  C_1'(C_3')^{1/2}(k'\delta_N)^{1/2} \]
	Taking $k'' > C_1'(k'C_3')^{1/2}$, we conclude that
	$$ \|c-c_0\|_{L^2} \leq k''\delta_N^{1/2}. $$
	Combining this with \eqref{post1} gives us \eqref{post2}.
\end{proof}

The final step in the proof of Theorem \ref{main} is to prove that the posterior contraction rate in the above Theorem carries over to the posterior mean $\overline{c}_N = \E^\Pi[c|\D_N]$ as well. Let
$$0 < \omega < \frac{1}{2(2+\nu)}. $$
We note that $\omega$ can be made arbitrarily close to $1/4$ by choosing $\alpha, \beta$ appropriately. Indeed, if $\alpha$ and $\beta$ are sufficiently large, \eqref{nu} allows $\nu$ to be arbitrarily close to $0$. Correspondingly, $\omega$ can be made arbitrarily close to $1/4$. Next, define
$$ \omega_N := k''\delta_N^{1/2} = k''N^{-\frac{1}{2(2+\nu)}} = o(N^{-\omega}) $$
where $k''>0$ is as in Theorem \ref{postcontr}.

\begin{proof}[Proof of Theorem \ref{main}]
	Observe that
	\begin{eqnarray*}
		\|\overline{c}_N-c_0\|_{L^2} &=& \left\|\E^{\Pi}[c|\mathcal{D}_N]-c_0\right\|_{L^2} \\
		&\leq & \E^{\Pi}\left[\|c-c_0\|_{L^2}|\mathcal{D}_N\right] \quad \textrm{(by Jensen's inequality)} \\
		&\leq & \omega_N +\E^{\Pi}\left[\|c-c_0\|_{L^2}\mathds{1}_{\{\|c-c_0\|_{L^2}\geq \omega_N\}}\big|\mathcal{D}_N\right] \\
		&\leq & \omega_N +\E^{\Pi}\left[\|c-c_0\|_{L^2}^2|\mathcal{D}_N\right]^{1/2}\left[\Pi(\|c-c_0\|_{L^2}\geq \omega_N|\mathcal{D}_N)\right]^{1/2}
	\end{eqnarray*}
	by Cauchy-Schwarz inequality. Now it suffices to show that the second summand on the right hand side is stochastically $O(\omega_N)$ as $N \to \infty$.
	
	Arguing as in the proof of Theorem \ref{g-contr} and applying Lemma \ref{smallball}, we get that the events
	$$A_N' = \left\{ \int_{\CMb}\prod_{i=1}^N\frac{p_c}{p_{c_0}}(X_i,Y_i,Z_i)d\Pi(c) \geq e^{-(2+C)N\delta_N^2}\right\} $$
	satisfy $P^N_{c_0}(A_N')\to 1$ as $N \to \infty$. Here, $C$ is as in Lemma \ref{smallball}. Now, Theorem \ref{postcontr} implies
	\begin{align*}
		& P^N_{c_0}\left(\E^{\Pi}\left[\|c-c_0\|_{L^2}^2|\mathcal{D}_N\right]\times \Pi(\|c-c_0\|_{L^2}\geq \omega_N|\mathcal{D}_N) > \omega_N^2\right) \\
		& \leq P^N_{c_0}\left( \E^{\Pi}\left[\|c-c_0\|_{L^2}^2|\mathcal{D}_N\right]e^{-(C+3)N\delta_N^2}>\omega_N^2\right) +o(1),
	\end{align*}
	which is bounded above by
	\begin{align}
		& P^N_{c_0}\left( e^{-(C+3)N\delta_N^2}\E^{\Pi}\left[\|c-c_0\|_{L^2}^2|\mathcal{D}_N\right] > \omega_N^2 , A_N'\right) + o(1) \nonumber \\
		&= P^N_{c_0}\left(e^{-(C+3)N\delta_N^2}\frac{\int \|c-c_0\|_{L^2}^2\prod_{i=1}^N \frac{p_c}{p_{c_0}}(X_i,Y_i,Z_i)d\Pi(c)}{\int \prod_{i=1}^N \frac{p_c}{p_{c_0}}(X_i,Y_i,Z_i)d\Pi(c)} > \omega_N^2, A_N'\right) +o(1) \nonumber \\
		&\leq P^N_{c_0}\left( \int \|c-c_0\|_{L^2}^2 \prod_{i=1}^N\frac{p_c}{p_{c_0}}(X_i,Y_i,Z_i)d\Pi(c) > \omega_N^2e^{N\delta_N^2}\right) +o(1) \label{inter}
	\end{align}
	using the fact that $\int \prod_{i=1}^N\frac{p_c}{p_{c_0}}(X_i,Y_i,Z_i)d\Pi(c) \geq e^{-(C+2)N\delta_N^2}$ on  $A_N'$. Next, using Markov's inequality, \eqref{inter} can be further bounded above by
	\begin{align*}
		&\leq e^{-N\delta_N^2}\omega_N^{-2}\E^N_{c_0}\left[\int \|c-c_0\|_{L^2}^2\prod_{i=1}^N \frac{p_c}{p_{c_0}}(X_i,Y_i,Z_i)d\Pi(c)\right] +o(1) \\
		&= e^{-N\delta_N^2}\omega_N^{-2}\int \|c-c_0\|_{L^2}^2 \E_{c_0}^N\left[\prod_1^N\frac{p_c}{p_{c_0}}(X_i,Y_i,Z_i)\right]d\Pi(c) +o(1) \quad \textrm{(by Fubini's Theorem)} \\
		&\leq e^{-N\delta_N^2}\omega_N^{-2}\int \|c-c_0\|_{L^2}^2d\Pi(c) + o(1) \quad \left(\textrm{since } \E^N_{c_0}\left[\prod_1^N\frac{p_c}{p_{c_0}}\right]=1\right) \\
		&\lsim e^{-N\delta_N^2}\omega_N^{-2} +o(1) \lsim e^{-N\delta_N^2}N^{2\omega} +o(1) \to 0 \textrm{ as } N \to \infty
	\end{align*}
 This completes the proof.
\end{proof}

\bigskip

\noindent {\bf Acknowledgement:} HZ is partly supported by the NSF grant DMS-2109116.


\bibliographystyle{abbrv}
\bibliography{bibfile}

\begin{thebibliography}{10}

\bibitem{AN19}
K.~Abraham and R.~Nickl.
\newblock On statistical calder\'on problems.
\newblock {\em Math. Stat. Learn.}, 2(2):165--216, 2019.

\bibitem{AZ15}
Y.~Assylbekov and H.~Zhou.
\newblock Boundary and scattering rigidity problems in the presence of a
  magnetic field and a potential.
\newblock {\em Inverse Prob. Imaging}, 9:935--950, 2015.

\bibitem{Be79}
G.~Beylkin.
\newblock Stability and uniqueness of the solution of the inverse kinematic
  problem of seismology in higher dimensions.
\newblock {\em Zap. Nauchn. Sem. Leningrad. Otdel. Mat. Inst. Steklov. (LOMI)},
  84:3--6, 1979.

\bibitem{Bohr22}
J.~Bohr.
\newblock A bernstein--von-mises theorem for the calder\'on problem with
  piecewise constant conductivities.
\newblock {\em arXiv:2206.08177}, 2022.

\bibitem{BN21}
J.~Bohr and R.~Nickl.
\newblock On log-concave approximations of high-dimensional posterior measures
  and stability properties in non-linear inverse problems.
\newblock {\em arXiv:2105.07835}, 2021.

\bibitem{CE08}
J.~Cheeger and D.~G. Ebin.
\newblock {\em Comparison theorems in {R}iemannian geometry}.
\newblock AMS Chelsea Publishing, Providence, RI, 2008.
\newblock Revised reprint of the 1975 original.

\bibitem{CQUZ07}
E.~Chung, J.~Qian, G.~Uhlmann, and H.~Zhao.
\newblock A new phase space method for recovering index of refraction from
  travel times.
\newblock {\em Inverse Problems}, 23:309, 2007.

\bibitem{CQUZ08}
E.~Chung, J.~Qian, G.~Uhlmann, and H.~Zhao.
\newblock A phase-space formulation for elastic-wave traveltime tomography.
\newblock {\em Journal of Physics: Conference Series}, 124:012018, 2008.

\bibitem{Croke04}
C.~Croke.
\newblock Rigidity theorems in {R}iemannian geometry.
\newblock {\em Geometric Methods in Inverse Problems and PDE Control, IMA Vol.
  Math. Appl., Springer, New York}, 137:47--72, 2004.

\bibitem{DPSU07}
N.~S. Dairbekov, G.~P. Paternain, P.~Stefanov, and G.~Uhlmann.
\newblock The boundary rigidity problem in the presence of a magnetic field.
\newblock {\em Adv. Math.}, 216:535--609, 2007.

\bibitem{DaS16}
M.~Dashti and A.~M. Stuart.
\newblock The bayesian approach to inverse problems.
\newblock {\em Handbook of Uncertainty Quantification, Editors R. Ghanem, D.
  Higdon and H. Owhadi, Springer}, 2016.

\bibitem{DuS16}
M.~M. Dunlop and A.~M. Stuart.
\newblock The bayesian formulation of eit: analysis and algorithms.
\newblock {\em Inverse Probl. Imaging}, 10(4):1007--1036, 2016.

\bibitem{ET92}
D.~E. Edmunds and H.~Triebel.
\newblock {Entropy Numbers and Approximation Numbers in Function Spaces, II}.
\newblock {\em Proceedings of the London Mathematical Society},
  s3-64(1):153--169, 01 1992.

\bibitem{GvdV17}
S.~Ghosal and A.~van~der Vaart.
\newblock {\em Fundamentals of nonparametric {B}ayesian inference}, volume~44
  of {\em Cambridge Series in Statistical and Probabilistic Mathematics}.
\newblock Cambridge University Press, Cambridge, 2017.

\bibitem{GN15}
E.~Giné and R.~Nickl.
\newblock {\em Mathematical Foundations of Infinite-Dimensional Statistical
  Models}.
\newblock Cambridge Series in Statistical and Probabilistic Mathematics.
  Cambridge University Press, 2015.

\bibitem{He1905}
G.~Herglotz.
\newblock \"{U}ber die elastizit\"{a}t der erde bei ber\"{u}cksichtigung ihrer
  variablen dichte.
\newblock {\em Zeitschr. f\"ur Math. Phys.}, 52:275--299, 1905.

\bibitem{LOY16}
M.~Lassas, L.~Oksanen, and Y.~Yang.
\newblock Determination of the spacetime from local time measurements.
\newblock {\em Math. Ann.}, 365:271--307, 2016.

\bibitem{Lee-RM}
J.~M. Lee.
\newblock {\em Introduction to Riemanninan Manifolds}.
\newblock Graduate Texts in Mathematics. Springer Cham, 2021.

\bibitem{LL99}
W.~V. Li and W.~Linde.
\newblock {Approximation, Metric Entropy and Small Ball Estimates for Gaussian
  Measures}.
\newblock {\em The Annals of Probability}, 27(3):1556 -- 1578, 1999.

\bibitem{MWBG12}
J.~Martin, L.~C. Wilcox, C.~Burstedde, and O.~Ghattas.
\newblock A stochastic newton mcmc method for large-scale statistical inverse
  problems with application to seismic inversion.
\newblock {\em SIAM J. Sci. Comput.}, 34:1460--1487, 2012.

\bibitem{Mi81}
R.~Michel.
\newblock Sur la rigidit\'{e} impos\'{e}e par la longueur des
  g\'{e}od\'{e}siques.
\newblock {\em Invent. Math.}, 65(1):71--83, 1981/82.

\bibitem{MNP19}
F.~Monard, R.~Nickl, and G.~P. Paternain.
\newblock Efficient nonparametric {B}ayesian inference for {$X$}-ray
  transforms.
\newblock {\em Ann. Statist.}, 47(2):1113--1147, 2019.

\bibitem{MNP21a}
F.~Monard, R.~Nickl, and G.~P. Paternain.
\newblock Consistent inversion of noisy non-abelian x-ray transforms.
\newblock {\em Communications on Pure and Applied Mathematics}, 74:1045--1099,
  2021.

\bibitem{MNP21b}
F.~Monard, R.~Nickl, and G.~P. Paternain.
\newblock Statistical guarantees for bayesian uncertainty quantification in
  non-linear inverse problems with gaussian process priors.
\newblock {\em Annals of Statistics}, 49(6):3255--3298, 2021.

\bibitem{Mu75b}
R.~G. Mukhometov.
\newblock The inverse kinematic problem of seismology on the plane.
\newblock {\em Math. Problems of Geophysics. Akad. Nauk. SSSR, Sibirsk. Otdel.,
  Vychisl. Tsentr, Novosibirsk}, 6(2):243--252, 1975.
\newblock (In Russian).

\bibitem{Mu75a}
R.~G. Mukhometov.
\newblock On the problem of integral geometry.
\newblock {\em Math. Problems of Geophysics. Akad. Nauk. SSSR, Sibirsk. Otdel.,
  Vychisl. Tsentr, Novosibirsk}, 6(2):212--242, 1975.
\newblock (In Russian).

\bibitem{Mu81}
R.~G. Mukhometov.
\newblock On a problem of reconstructing {R}iemannian metrics.
\newblock {\em Sibirsk. Mat. Zh.}, 22(3):119--135, 237, 1981.

\bibitem{MR78}
R.~G. Mukhometov and V.~G. Romanov.
\newblock On the problem of finding an isotropic {R}iemannian metric in an
  {$n$}-dimensional space.
\newblock {\em Dokl. Akad. Nauk SSSR}, 243(1):41--44, 1978.

\bibitem{Nickl20}
R.~Nickl.
\newblock Bernstein–von mises theorems for statistical inverse problems i:
  Schr\"odinger equation.
\newblock {\em J. Eur. Math. Soc. (JEMS)}, 22(8):2697--2750, 2020.

\bibitem{Nickl22}
R.~Nickl.
\newblock Bayesian non-linear statistical inverse problems.
\newblock 2022.

\bibitem{NP21}
R.~Nickl and G.~Paternain.
\newblock On some information-theoretic aspects of non-linear statistical
  inverse problems.
\newblock {\em arXiv:2107.09488}, 2021.

\bibitem{PUZ19}
G.~Paternain, G.~Uhlmann, and H.~Zhou.
\newblock Lens rigidity for a particle in a yang-mills field.
\newblock {\em Comm. Math. Phys.}, 366:681--707, 2019.

\bibitem{GeomIP}
G.~P. Paternain, M.~Salo, and G.~Uhlmann.
\newblock {\em Geometric inverse problems---with emphasis on two dimensions},
  volume 204 of {\em Cambridge Studies in Advanced Mathematics}.
\newblock Cambridge University Press, Cambridge, 2023.
\newblock With a foreword by Andr\'{a}s Vasy.

\bibitem{PU05}
L.~Pestov and G.~Uhlmann.
\newblock Two dimensional compact simple {R}iemannian manifolds are boundary
  distance rigid.
\newblock {\em Ann. of Math. (2)}, 161(2):1093--1110, 2005.

\bibitem{Plamen22}
P.~Stefanov.
\newblock The lorentzian scattering rigidity problem and rigidity of stationary
  metrics.
\newblock {\em arXiv:2212.13213}, 2022.

\bibitem{SU05}
P.~Stefanov and G.~Uhlmann.
\newblock Boundary rigidity and stability for generic simple metrics.
\newblock {\em J. Amer. Math. Soc.}, 18(4):975--1003, 2005.

\bibitem{SUV16}
P.~Stefanov, G.~Uhlmann, and A.~Vasy.
\newblock Boundary rigidity with partial data.
\newblock {\em J. Amer. Math. Soc.}, 29(2):299--332, 2016.

\bibitem{SUV21}
P.~Stefanov, G.~Uhlmann, and A.~Vasy.
\newblock Local and global boundary rigidity and the geodesic {X}-ray transform
  in the normal gauge.
\newblock {\em Ann. of Math. (2)}, 194(1):1--95, 2021.

\bibitem{SUVZ19}
P.~Stefanov, G.~Uhlmann, A.~Vasy, and H.~Zhou.
\newblock Travel time tomography.
\newblock {\em Acta Math. Sin. (Engl. Ser.)}, 35(6):1085--1114, 2019.

\bibitem{Stuart10}
A.~M. Stuart.
\newblock Inverse problems: A bayesian perspective.
\newblock {\em Acta Numerica}, 19:451--559, 2010.

\bibitem{TGMS13}
B.-T. Tan, O.~Ghattas, J.~Martin, and G.~Stadler.
\newblock A computational framework for infinite-dimensional bayesian inverse
  problems part i: The linearized case, with applications to global seismic
  inversion.
\newblock {\em SIAM J. Sci. Comput.}, 35:1--11, 2013.

\bibitem{Triebel}
H.~Triebel.
\newblock {\em Theory of Function Spaces}.
\newblock Modern Birkh{\"a}user Classics. Springer Basel, 2010.

\bibitem{V11}
C.~Villani.
\newblock Regularity of optimal transport and cut locus: From nonsmooth
  analysis to geometry to smooth analysis.
\newblock {\em Discrete \& Continuous Dynamical Systems}, 30(2):559--571, 2011.

\bibitem{Wa99}
J.-N. Wang.
\newblock Stability for the reconstruction of a {R}iemannian metric by boundary
  measurements.
\newblock {\em Inverse Problems}, 15(5):1177--1192, 1999.

\bibitem{WZ1907}
E.~Weichert and K.~Z{\"o}ppritz.
\newblock \"{U}ber erdbebenwellen.
\newblock {\em Nachr. Koenigl. Gesellschaft Wiss. G\"ottingen}, 4:415--549,
  1907.

\bibitem{YUZ21}
Y.~Yang, G.~Uhlmann, and H.~Zhou.
\newblock Travel time tomography in stationary spacetimes.
\newblock {\em J. Geom. Anal.}, 31:9573--9596, 2021.

\bibitem{ACU19}
T.~A. Yeung, E.~Chung, and G.~Uhlmann.
\newblock Numerical inversion of three-dimensional geodesic x-ray transform
  arising from travel time tomography.
\newblock {\em SIAM J. Imaging Sciences}, 12:1296--1323, 2019.

\bibitem{Zhou18}
H.~Zhou.
\newblock Lens rigidity with partial data in the presence of a magnetic field.
\newblock {\em Inverse Prob. Imaging}, 12:1365--1387, 2018.

\end{thebibliography}

\end{document}